\numberwithin{equation}{section}
\theoremstyle{plain}
\newtheorem{theorem}{Theorem}\numberwithin{theorem}{section}
\newtheorem{lemma}{Lemma}\numberwithin{lemma}{section}
\newtheorem{proposition}{Proposition}\numberwithin{proposition}{section}
\newtheorem{corollary}{Corollary}\numberwithin{corollary}{section}
\theoremstyle{definition}
\newtheorem{definition}{Definition}\numberwithin{definition}{section}
\theoremstyle{remark}
\newtheorem{remark}{Remark}\numberwithin{remark}{section}
\newcommand{\R}{\mathbb{R}}
\newcommand{\F}{{}_pF_q}
\title{The Newton polyhedron and positivity \\of ${}_2F_3$ hypergeometric functions}
\author{Yong-Kum Cho\footnote{ykcho@cau.ac.kr.
Department of Mathematics, College of Natural Sciences, Chung-Ang University,
84 Heukseok-Ro, Dongjak-Gu, Seoul 06974, Korea.}
\and Seok-Young Chung\footnote{sychung@knights.ucf.edu. Department of Mathematics, University of Central Florida, 
4393 Andromeda Loop N, Orlando, FL 32816, USA.}}
\date{}
\begin{document}

\maketitle

\textsc{Abstract.} {\small As for the ${}_2F_3$ hypergeometric function of the form
\begin{equation*}
{}_2F_3\left[\begin{array}{c}
a_1, a_2\\ b_1, b_2, b_3\end{array}\biggr| -x^2\right]\qquad(x>0),
\end{equation*}
where all of parameters are assumed to be positive, we give sufficient conditions
on $(b_1, b_2, b_3)$ for its positivity in terms of Newton polyhedra with vertices
consisting of permutations of $\,(a_2, a_1+1/2, 2a_1)\,$ or $\,(a_1, a_2+1/2, 2a_2).$
As an application, we obtain an extensive validity region of $(\alpha, \lambda, \mu)$
for the inequality
\begin{equation*}
\int_0^x (x-t)^{\lambda}\, t^{\mu} J_\alpha(t)\, dt \ge 0\qquad(x>0).
\end{equation*}}

\medskip

\textsc{Keywords.} {\small Bessel functions, fractional integrals, Newton polyhedron,}

{\small ${}_pF_q$ hypergeometric functions, sums of squares method, transference principle.}

\medskip

\textsc{2010 Mathematics Subject Classification.} {\small  26D15, 33C10, 33C20.}

\section{Introduction}
This paper concerns primarily the geometric structure of parameters
that ensures the positivity of ${}_2F_3$ hypergeometric functions of type
\begin{equation}\label{TF}
\Phi(x) \equiv {}_2F_3\left[\begin{array}{c}
a_1, a_2\\ b_1, b_2, b_3\end{array}\biggr| -x^2\right]\qquad(x>0),
\end{equation}
where all of parameters $a_i, b_j$ are assumed to be positive.

In our previous studies \cite{CCY2}, \cite{CY}, we considered the problem of positivity
for ${}_1F_2$ hypergeometric functions of similar type and described regions of positivity
in the plane of denominator-parameter pairs by means of Newton polyhedra and their hyperbolic
extensions (see Theorem \ref{theoremN} for a summary).
In a consistent manner, we aim at providing sufficient conditions for the positivity of
${}_2F_3$ hypergeometric functions of type \eqref{TF} in terms of
Newton polyhedra in the $(b_1, b_2, b_3)$-space for each fixed $(a_1, a_2)$.

To state briefly, let $\mathcal{A}, \mathcal{B}$ be the spatial sets of all permutations of
$$\,(a_2, a_1+1/2, 2a_1), \quad (a_1, a_2+1/2, 2a_2),$$
respectively. In view of the known formula (\cite[6.2]{L}, \cite[5.4]{Wa})
\begin{align}\label{J}
J_\nu^2(x) = \frac{(x/2)^{2\nu}}{\left[\Gamma(\nu+1)\right]^2}\,
{}_1 F_{2}\left[\begin{array}{c} \nu +1/2\\ \nu+1, 2\nu +1\end{array}
\biggr| \,-x^2\right],
\end{align}
where $J_\nu$ stands for the Bessel function of the first kind of order $\,\nu\in\R,$
it is evident that $\Phi$ is nonnegative for each $\,\mathbf{b} = (b_1, b_2, b_3)\in \mathcal{A}\cup\mathcal{B}.$

By means of fractional integrals, to be explained in detail, it is simple to observe
a transference principle which asserts if $\Phi$ is nonnegative for a point $\mathbf{b}$,
then $\Phi$ remains strictly positive for all points of the octant
$\,\mathbf{b} +\R_+^3\,$ except for the corner $\mathbf{b}$. On applying this transference principle,
it is thus found that $\Phi$ remains strictly positive for all points of
\begin{equation}\label{TF2}
\bigcup_{\mathbf{b}\,\in\,\mathcal{A}\cup\mathcal{B}} \left( \mathbf{b} +\R_+^3\right)
\end{equation}
excluding $\mathcal{A}\cup\mathcal{B}$ where $\Phi$ is shown to be nonnegative.

As it is natural to ask if this positivity region could be extended further,
we shall prove that $\Phi$ indeed remains positive for all points of the convex hull
containing \eqref{TF2}, so called the Newton polyhedron of $\mathcal{A}\cup\mathcal{B},$
under certain \emph{admissible} conditions on $a_1, a_2$. From a geometric view-point,
the Newton polyhedron of $\mathcal{A}\cup\mathcal{B}$ represents an infinite polygonal region
in $\R_+^3$ surrounded by a hexagonal or triangular bottom face, several side
quadrilaterals and infinite planar faces parallel to one of the coordinate axes.

An expansion formula due to Gasper \cite{Ga1} gives
\begin{equation}
\Phi(x) = x^{-2\nu}\sum_{n=0}^\infty C_n(\nu)\, J_{n+\nu}^2(x)
\end{equation}
for any real number $\nu$ subject to certain condition. Once each coefficient $C_n(\nu)$
were shown to be nonnegative, we may conclude that $\Phi$ is nonnegative from this expansion.
Often referred to as \emph{Gasper's sums of squares method}, it is this framework that our investigation
will be based on.

In practice, as each coefficient $C_n(\nu)$ involves either terminating
${}_4F_3$ or ${}_5F_4$ hypergeometric series, we are confronted with
the problem of how to determine the sign of those
terminating series. In particular, if $(b_1, b_2, b_3)$ belongs to one of
the aforementioned side quadrilaterals, we must deal with the terminating
${}_5F_4$ hypergeometric series of the from
\begin{equation}\label{TF4}
{}_{5}F_{4} \left[\begin{array}{c} -n, n+\alpha_1, \alpha_2, \alpha_3, \alpha_4\\
\beta_1, \beta_2, \beta_{3}, \beta_4\end{array}\right],\quad n = 1, 2, \cdots,
\end{equation}
with appropriate values of $\,\alpha_i, \beta_j\,$ expressible in terms of $\,a_i, b_j, \nu.$

In our previous work \cite{CCY2}, we exploited one of Whipple's transformation formulas and an
induction argument to set up a criterion of positivity for the terminating
${}_4F_3$ hypergeometric series of similar type. Concerning \eqref{TF4}, however,
as an appropriate version of Whipple's transformation formula
is no longer available, we shall make use of a modification of more general transformation formulas
developed by Fields and Wimp \cite{FW} to decompose it into a finite sum involving
products of terminating ${}_4F_3$ hypergeometric series,
which will enable us to obtain a set of positivity criteria.

In summary, it turns out that each of Newton polyhedra with vertices
$\,\mathcal{A}, \,\mathcal{B}, \,\mathcal{A}\cup\mathcal{B}\,$
is available as a positivity region of $\Phi$ under different conditions on the pair $(a_1, a_2)$.
For this reason, our positivity results will be presented
in three separate statements (Theorems \ref{theorem2}, \ref{theorem3}, \ref{theorem4})
in each of which an analytic description of the associated Newton polyhedron will be given
for the sake of practical applications.

The problem of positivity for the ${}_2F_3$ hypergeometric function of type \eqref{TF}
arises often as a critical issue in various disguises. As an illustration, the famous problem of
determining $\alpha, \lambda, \mu$ for validity of
\begin{equation}\label{TF5}
\int_0^x (x-t)^{\lambda}\, t^{\mu} J_\alpha(t)\, dt\, \ge \,0\qquad(x>0)
\end{equation}
is equivalent to the problem of positivity for a special case of \eqref{TF}
implicitly involved (see \eqref{F5} for details). In application of our results,
we shall specify an extensive validity region of $(\alpha, \lambda, \mu)$ for the inequality \eqref{TF5} which will provide
at once simplified proofs or improvements for a number of known inequalities and answers to some open conjectures.

As usual, we shall define the ${}_pF_q$ hypergeometric function by
\begin{equation}\label{pFq}
{}_pF_q\left[\begin{array}{c} u_1, \cdots, u_p\\
v_1, \cdots, v_q\end{array}\biggr| \,z\right]
= \sum_{n=0}^\infty \frac{(u_1)_n\cdots (u_p)_n}
{n!\,(v_1)_n\cdots (v_q)_n}\, z^n\qquad(z\in\mathbb{C}),
\end{equation}
where parameters $u_i, v_j$ are real numbers subject to the condition that none of
$v_j$'s coincides with a negative integer and the coefficients are written in Pochhammer's notation;
for $\,\alpha\in\R\,$ and a positive integer $n$,
$$(\alpha)_n = \alpha(\alpha+1)\cdots(\alpha+n-1),
\quad (\alpha)_0=1.$$
In the special case $\,z=1,$ it is customary to delete the unit argument.
For the general reference, we refer to Bailey \cite{Ba} and Luke \cite{L}.

To simplify notation, we shall use throughout the logical symbol $\wedge$ to denote \emph{and};
$A\wedge B$ is true if and only if both $A$ and $B$ are true. In addition, we shall write
$\,\R_+ = [0, \infty)\,$ and $\,\R_+^* = (0, \infty)\,$ distinctively.

\section{Transference principle and necessities}
As observed by many authors in different contexts (see \cite{As2}, \cite{CY}, \cite{FI}, \cite{Ga1}),
it is possible to transfer a known positivity region to a much larger region
with the aid of fractional integrals. We shall denote the beta function by
$$B(\alpha, \beta) = \int_0^1 (1-t)^{\alpha-1} t^{\beta-1} dt \qquad(\alpha>0, \,\beta>0).$$

\begin{proposition}\label{proposition1} {\rm (transference principle)}
Suppose
\begin{equation*}
\Phi(\mathbf{a}, \mathbf{b}; x) \equiv {}_2F_3\left[\begin{array}{c}
a_1, a_2\\ b_1, b_2, b_3\end{array}\biggr| -x^2\right]\ge 0 \qquad (x>0)
\end{equation*}
for some $\,\mathbf{a} = (a_1, a_2)\in\R^2\,$ and $\,\mathbf{b} = (b_1, b_2, b_3)\in \R^3,\,$ where
all of the components of $\,\mathbf{a}, \mathbf{b}\,$ are assumed to be positive.

\begin{itemize}
\item[\rm(i)] For any $\,\boldsymbol{\epsilon} = (\epsilon_1, \epsilon_2, \epsilon_3)\in\R_+^3,\,$ if $\,
 \epsilon_1, \epsilon_2, \epsilon_3\,$ are not simultaneously zero, then
$\,\Phi(\mathbf{a}, \mathbf{b}+\boldsymbol{\epsilon}; x)>0\,$ for all $\,x>0.$
\item[\rm(ii)] For any $\,\boldsymbol{\delta} = (\delta_1, \delta_2)\in \R_+^2,\,$
if $\,\delta_1, \delta_2\,$ are not simultaneously zero and $\,0\le\delta_j<a_j,\,j=1, 2,\,$
then $\,\Phi(\mathbf{a}-\boldsymbol{\delta}, \mathbf{b}; x)>0\,$ for all $\,x>0.$
\end{itemize}
\end{proposition}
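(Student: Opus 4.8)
The plan is to reduce both parts to a single fractional-integral identity that either raises a denominator parameter or lowers a numerator parameter while preserving positivity. The starting point is the elementary Pochhammer ratio, valid for $c>0$ and $\epsilon>0$,
\[
\frac{(c)_n}{(c+\epsilon)_n}=\frac{B(c+n,\epsilon)}{B(c,\epsilon)}
=\frac{1}{B(c,\epsilon)}\int_0^1 t^{c+n-1}(1-t)^{\epsilon-1}\,dt,
\]
which follows at once from $B(c+n,\epsilon)/B(c,\epsilon)=(c)_n/(c+\epsilon)_n$. Inserting the factor $z^n$, summing term by term against the coefficients of the ${}_2F_3$ series, and writing $z=-x^2$ so that $zt=-(x\sqrt t\,)^2$, I would obtain for the raising of $b_1$ (taking $c=b_1$)
\[
\Phi(\mathbf a,\mathbf b+\epsilon\,\mathbf e_1;x)=\frac{1}{B(b_1,\epsilon)}
\int_0^1 t^{b_1-1}(1-t)^{\epsilon-1}\,\Phi(\mathbf a,\mathbf b;x\sqrt t\,)\,dt,
\]
and for the lowering of $a_1$ (taking $c=a_1-\delta$, $\epsilon=\delta$, so that $c+\epsilon=a_1$)
\[
\Phi(\mathbf a-\delta\,\mathbf e_1,\mathbf b;x)=\frac{1}{B(a_1-\delta,\delta)}
\int_0^1 t^{a_1-\delta-1}(1-t)^{\delta-1}\,\Phi(\mathbf a,\mathbf b;x\sqrt t\,)\,dt,
\]
where $\mathbf e_1$ denotes adjustment of a single representative coordinate.

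Granting these representations, positivity is immediate. The term-by-term integration is legitimate because, with $p=2\le q=3$, the series defining $\Phi$ is entire and converges uniformly on the compact argument range, while the beta kernel is integrable precisely when $\epsilon>0$ (respectively $0<\delta<a_1$, which keeps both arguments of $B(a_1-\delta,\delta)$ positive). The hypothesis $\Phi(\mathbf a,\mathbf b;\cdot)\ge 0$ then makes each integrand nonnegative, and since the kernel is strictly positive on $(0,1)$ while $\Phi(\mathbf a,\mathbf b;0)=1>0$ forces $\Phi(\mathbf a,\mathbf b;\cdot)>0$ on a neighborhood of the origin by continuity, the integral picks up strictly positive mass. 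Thus a single raise of one $b_j$ by a positive amount, or a single lowering of one $a_j$ by a positive amount strictly below $a_j$, already converts nonnegativity into strict positivity. To finish (i), I would apply the first identity successively to each coordinate with $\epsilon_j>0$; at least one such step occurs and yields strict positivity, which the remaining fractional integrals (against positive kernels) preserve. For (ii) I would apply the second identity to each coordinate with $\delta_j>0$, the assumption $0\le\delta_j<a_j$ guaranteeing at every stage that the numerator parameter being lowered stays positive and that the associated beta function is well defined.

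I expect none of the steps to be deep; the only points demanding care are the justification of the term-by-term integration, handled by the entire-function convergence noted above, and the passage from nonnegativity to strict positivity, for which the value $\Phi(\cdot;0)=1$ together with continuity is the essential input. The constraint $\delta_j<a_j$ in (ii) is not incidental but is exactly what allows the decrease of a numerator parameter to be realized as a genuine fractional integral with a positive, integrable kernel; without it the representation breaks down. The main obstacle, such as it is, lies in the bookkeeping of the iteration and in stating the strict-positivity step carefully, rather than in any substantial analytic difficulty.
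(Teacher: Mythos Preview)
Your proposal is correct and follows essentially the same approach as the paper: the paper writes the identical fractional-integral representations (after the harmless substitution $t\mapsto t^2$, which turns your $\Phi(\mathbf a,\mathbf b;x\sqrt t)$ into $\Phi(\mathbf a,\mathbf b;xt)$ against the kernel $(1-t^2)^{\epsilon-1}t^{2b_1-1}$) and then reads off positivity from the nonnegative integrand. Your treatment is in fact slightly more explicit than the paper's about the justification of termwise integration and the passage from nonnegativity to strict positivity via $\Phi(\cdot;0)=1$ and continuity.
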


\begin{proof}
For $\,\epsilon_1>0,$ the readily-verified representation
\begin{align*}
\Phi(\mathbf{a}, b_1 +\epsilon_1, b_2, b_3; x)
= \frac{2}{B(b_1, \epsilon_1)} \int_0^1 \Phi(\mathbf{a}, \mathbf{b}; xt)(1-t^2)^{\epsilon_1-1} t^{2b_1-1}dt
\end{align*}
implies $\,\Phi(\mathbf{a}, b_1 +\epsilon_1, b_2, b_3; x)>0\,$ for all $\,x>0.$ By permuting
the role of $\,b_j, \epsilon_j\,$ or
replacing the kernel appropriately, it is straightforward to verify part (i) in all
possible occasions.

In a similar manner, if $\,0<\delta_1<a_1,\,$ then the representation
\begin{align*}
\Phi(a_1-\delta_1, a_2, \mathbf{b}; x)
&= \frac{2}{B(\delta_1, a_1-\delta_1)} \int_0^1 \Phi(\mathbf{a}, \mathbf{b}; xt)(1-t^2)^{\delta_1-1} t^{2a_1-2\delta_1-1}dt
\end{align*}
implies $\,\Phi(a_1-\delta_1, a_2, \mathbf{b}; x)>0\,$ for all $\,x>0.\,$
By permuting the role of $\,a_j, \delta_j\,$ or
replacing the kernel appropriately, it is also straightforward to verify part (ii)
in all possible occasions.
\end{proof}

By making use of the asymptotic behavior and the above transference principle,
we deduce a set of necessary conditions as follows.

\begin{proposition}\label{proposition2} For $\,0<a_1\le a_2\,$ and $\,b_j>0,\,j=1, 2, 3,$ put
\begin{equation*}
\Phi(x) \equiv {}_2F_3\left[\begin{array}{c}
a_1, a_2\\ b_1, b_2, b_3\end{array}\biggr| - x^2\right] \qquad(x>0).
\end{equation*}
If $\,\Phi(x)\ge 0\,$ for all $\,x>0,\,$ then it is necessary that
\begin{equation}\label{NC}
b_1\wedge b_2\wedge b_3 \ge a_1, \quad b_1 + b_2 + b_3 \ge 3a_1 + a_2 + 1/2.
\end{equation}
Moreover, $\Phi$ changes sign at least once if these conditions are violated.
\end{proposition}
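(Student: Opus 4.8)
The plan is to read off both inequalities in \eqref{NC} from the behaviour of $\Phi$ as $x\to\infty$, for which the Mellin--Barnes representation is the natural tool. Writing $K=\Gamma(b_1)\Gamma(b_2)\Gamma(b_3)/[\Gamma(a_1)\Gamma(a_2)]>0$ and substituting $x^2\mapsto u$ in the Mellin transform of ${}_2F_3$, one obtains, for $s$ in a suitable vertical strip,
\[
M(s):=\int_0^\infty x^{s-1}\Phi(x)\,dx=\frac{K}{2}\cdot\frac{\Gamma(s/2)\,\Gamma(a_1-s/2)\,\Gamma(a_2-s/2)}{\Gamma(b_1-s/2)\,\Gamma(b_2-s/2)\,\Gamma(b_3-s/2)},
\]
together with the inversion $\Phi(x)=\frac{1}{2\pi i}\int M(s)x^{-s}\,ds$. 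Shifting the contour to the right, the poles of $\Gamma(a_1-s/2)$ and $\Gamma(a_2-s/2)$ at $s=2a_1,2a_2,\dots$ produce the algebraic part of the expansion, whose leading term is $C_0\,x^{-2a_1}$ since $a_1\le a_2$, while the behaviour of the integrand for large $|\operatorname{Im}s|$ (the $E$-term of the Mellin--Barnes asymptotics) produces an oscillatory contribution $C_{\mathrm{osc}}\,x^{-\tau}\cos(2x-\phi)$. Here $\kappa=q-p+1=2$, so that $(-x^2)^{1/\kappa}=\pm ix$ gives the frequency $2$; the decay exponent works out to $\tau=(b_1+b_2+b_3)-(a_1+a_2)-\tfrac12$, which I would pin down from the general asymptotic formulas in \cite{L} or by analogy with \eqref{J}, where the case of $J_\nu^2$ is exactly $\tau=2\nu+1$.

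For the sum condition I would compare the two exponents. One has $\tau\ge 2a_1$ precisely when $b_1+b_2+b_3\ge 3a_1+a_2+\tfrac12$, so the violation of the second inequality in \eqref{NC} is equivalent to $\tau<2a_1$. In that regime the oscillatory term dominates the algebraic one, and since the amplitude constant $C_{\mathrm{osc}}$ is nonzero, $\Phi(x)\,x^{\tau}\to C_{\mathrm{osc}}\cos(2x-\phi)+o(1)$ assumes both signs for large $x$; hence $\Phi$ changes sign. This settles the second inequality unconditionally, so in treating the first I may assume the sum condition holds, which guarantees $\tau\ge 2a_1$ and hence that $M(s)$ converges on the entire interval $0<s<2a_1$.

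For the minimum condition I would exploit the sign of $M$. If $\Phi\ge 0$, then $M(s)\ge 0$ for every real $s\in(0,2a_1)$, because $x^{s-1}>0$. Setting $t=s/2\in(0,a_1)$, the numerator $\Gamma(t)\Gamma(a_1-t)\Gamma(a_2-t)$ is strictly positive throughout, each argument lying in $(0,\infty)$ by $t<a_1\le a_2$, so positivity of $M$ forces $\Gamma(b_1-t)\Gamma(b_2-t)\Gamma(b_3-t)>0$ on all of $(0,a_1)$. If $b_j\ge a_1$ for every $j$ this holds automatically; conversely, suppose $\min_j b_j<a_1$ and assume for contradiction that $\Phi\ge 0$. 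By the transference principle \ref{proposition1}(i), replacing $\mathbf b$ by $\mathbf b+\boldsymbol\epsilon$ with small, distinct, positive entries yields $\Phi(\mathbf a,\mathbf b+\boldsymbol\epsilon)>0$, while arranging the perturbed denominator parameters to have a unique minimum $m'<a_1$. The factor $\Gamma(m'-t)$ then contributes a single simple pole at $t=m'$ inside $(0,a_1)$, across which the product changes sign; thus the associated $M$ is negative just above $s=2m'$, contradicting $\Phi(\mathbf a,\mathbf b+\boldsymbol\epsilon)>0$. This forces $\min_j b_j\ge a_1$, and the same sign analysis exhibits an explicit sign change of $\Phi$ whenever the condition fails.

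The main obstacle I anticipate is the rigorous extraction of the oscillatory term: establishing that the exponent is exactly $\tau=(b_1+b_2+b_3)-(a_1+a_2)-\tfrac12$ and, more importantly, that its coefficient $C_{\mathrm{osc}}$ does not vanish, since the entire sum-condition argument rests on the oscillation genuinely dominating the algebraic term. Should $C_{\mathrm{osc}}$ degenerate for exceptional parameters, I would again perturb via the transference principle to reduce to the generic case. The minimum-condition argument, by contrast, is comparatively routine once the Mellin representation and the convergence strip $0<s<2a_1$ are in place.
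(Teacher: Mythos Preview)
Your route to the sum condition is the paper's: both compare the algebraic decay $x^{-2a_1}$ against the oscillatory decay $x^{-\tau}$, $\tau=b_1+b_2+b_3-a_1-a_2-\tfrac12$, and conclude that $\Phi$ changes sign when $\tau<2a_1$. The paper quotes the expansion from \cite{L} directly while you sketch it via Mellin--Barnes, but the substance is identical. (Incidentally, in the standard expansion the oscillatory coefficient is $1/\sqrt{\pi}$ times a nonvanishing Gamma product, so $C_{\mathrm{osc}}\ne 0$ always; the paper's perturbation step is used only to handle the logarithmic case $a_2-a_1\in\mathbb Z$ in the \emph{algebraic} part.)

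Your treatment of the minimum condition, however, is a genuinely different and somewhat cleaner argument. The paper proceeds indirectly: assuming $b_1<a_1$, it realises ${}_1F_2[a_2;b_2,b_3;-x^2]$ as a fractional integral of $\Phi$ and hence positive, invokes the authors' earlier ${}_1F_2$ result (Theorem~\ref{theoremN}) to force $b_2\wedge b_3>a_2$, and then reads off from the asymptotics that the leading coefficient carries the sign of $1/\Gamma(b_1-a_1)<0$, so $\Phi$ is eventually negative. Your Mellin argument bypasses Theorem~\ref{theoremN} entirely: once the sum condition guarantees convergence of $M(s)$ on $0<s<2a_1$, nonnegativity of $\Phi$ forces $M\ge 0$ there, whereas the explicit formula shows $M$ changes sign at $s=2m'$ whenever $m'=\min_j b_j'<a_1$. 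The perturbation to a unique minimum is exactly the right device. One wording slip: $\Gamma(m'-t)$ has a pole at $t=m'$, so $M$ has a \emph{zero} there rather than a pole; but since $1/\Gamma$ changes sign across the origin, your sign-change conclusion stands.
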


\begin{proof}
Assume first that $a_2 - a_1$ is not an integer. Setting
$$\chi = b_1+b_2+b_3-a_1-a_2-1/2,$$
a well-known asymptotic expansion formula (\cite{L}, \cite{O}) gives
\begin{align}\label{AS}
\Phi(x) &= \frac{\,\Gamma(b_1)\Gamma(b_2)\Gamma(b_3)\,}{\Gamma(a_1)\Gamma(a_2)}\nonumber\\
&\quad\,\,\,\biggl\{\frac{\Gamma(a_1)\Gamma(a_2-a_1)}{\,\Gamma(b_1-a_1)\Gamma(b_2-a_1)\Gamma(b_3-a_1)\,}
\left(\frac{x}{2}\right)^{-2a_1}\left[1+O\left(x^{-2}\right)\right] \nonumber\\
&\quad + \frac{\Gamma(a_2)\Gamma(a_1-a_2)}{\,\Gamma(b_1-a_2)\Gamma(b_2-a_2)\Gamma(b_3-a_2)\,}
\left(\frac{x}{2}\right)^{-2a_2}\left[1+O\left(x^{-2}\right)\right]\nonumber\\
&\quad +\frac{1}{\,\sqrt{\pi}\,}\left(\frac{x}{2}\right)^{-\chi}
\left[\cos\left(x-\frac{\pi\chi}{2}\right)+O\left(x^{-1}\right)\right]\biggr\}
\end{align}
as $x\to\infty$. In view of the oscillatory nature of the last part,
it is evident that the condition $\,\chi\ge 2\min (a_1,\,a_2)\,$ or equivalently
\begin{equation}\label{NC2}
b_1 + b_2 + b_3 \ge 3a_1 + a_2 + 1/2
\end{equation}
is necessary. Otherwise, $\Phi$ oscillates in sign infinitely often.

Suppose now $a_2-a_1$ happens to be an integer. For $\,0<\delta<\min (1,\,a_2),\,$ it follows from the
transference principle of Proposition \ref{proposition1} that
\begin{equation*}
{}_2F_3\left[\begin{array}{c}
a_1, a_2-\delta \\ b_1, b_2, b_3\end{array}\biggr| -x^2\right]>0\qquad(x>0)
\end{equation*}
and hence $b_1, b_2, b_3$ must satisfy the condition
\begin{equation*}
b_1 + b_2 + b_3 \ge a_1 + a_2 -\delta + 1/2 + 2 \min(a_1, \,a_2-\delta).
\end{equation*}
By letting $\,\delta\to 0,\,$ it is thus found that \eqref{NC2} is necessary.

It remains to verify the additional requirements $\,b_j\ge a_1\,$ for all $j$ under the
assumption \eqref{NC2}. By using a limiting argument as above, it suffices to deal with the case
$\,b_1+b_2+b_3>3a_1+a_2+1/2.$ Let us assume $\,0<b_1<a_1.$ Integrating termwise, it is
elementary to deduce
\begin{align*}
{}_1F_2\left[\begin{array}{c}
a_2\\ b_2, b_3\end{array}\biggr| -x^2\right] =
\frac{2}{B(b_1, a_1-b_1)}\int_0^1 \Phi(xt) (1-t^2)^{a_1-b_1 -1} t^{2b_1-1} dt >0
\end{align*}
for all $\,x>0,$ whence $\,b_2\wedge b_3> a_2\,$ according to Theorem \ref{theoremN}.

In the case $\,a_1< a_2,$ the asymptotic behavior \eqref{AS} implies
\begin{equation*}
\Phi(x) \sim  \frac{\,\Gamma(b_1)\Gamma(b_2)\Gamma(b_3)\Gamma(a_2-a_1)\,}
{\Gamma(a_2)\Gamma(b_1-a_1)\Gamma(b_2-a_1)\Gamma(b_3-a_1)\,}
\left(\frac{x}{2}\right)^{-2a_1}
\end{equation*}
as $\,x\to\infty.$ Replacing $b_1$ by $b_1+\epsilon$ with $\,\epsilon>0,$ if necessary,
we may assume $\,a_1-1<b_1<a_1\,$ so that $\,\Gamma(b_1-a_1)<0\,$ and the multiplicative constant
is negative. Since $\Phi$ is smooth with $\,\Phi(0) =1,$ it implies that $\Phi$ must change sign at least once,
which contradicts the nonnegativity assumption on $\Phi$. Thus $\,b_1\ge a_1.$
In the case $\,a_1 = a_2,\,$ replacing $a_1$ by
$\,a_1-\delta,\,0<\delta<a_1,\,$ the same reasonings would lead to $\,b_1\ge a_1 -\delta \,$ and subsequently
$\,b_1\ge a_1\,$ by letting $\,\delta\to 0.\,$ By symmetry, it is also necessary that $\,b_2\ge a_1,\,b_3\ge a_1.\,$
\end{proof}

\section{Expansions of ${}_pF_q$ hypergeometric functions}
The purpose of this section is to give a list of expansion formulas for
${}_pF_q$ hypergeometric functions which will be applied frequently in the sequel.

For simplicity, it is often convenient to use the vector notation
$$\boldsymbol{u}_p = (u_1, \cdots, u_p),\quad (\boldsymbol{u}_p)_n = (u_1)_n\cdots (u_p)_n$$
so that the ${}_pF_q$ hypergeometric function of \eqref{pFq} can be written as
\begin{equation*}
    \F\left[\begin{array}{c} \boldsymbol{u}_p \\
    \boldsymbol{v}_q \end{array}\biggr| z\right]
    =\sum_{n=0}^\infty\frac{(\boldsymbol{u}_p)_n}{n!\,(\boldsymbol{v}_q)_n} z^n.
\end{equation*}

An expansion formula due to Fields and Wimp \cite[3.3]{FW} states
\begin{align}\label{FW}
\F\left[\begin{array}{c} \boldsymbol{u}_p \\
    \boldsymbol{v}_q \end{array}\biggr| zw\right]
&=\sum_{n=0}^\infty \frac{(\boldsymbol{\alpha}_r)_n (-z)^n}{n! (n+\delta)_n (\boldsymbol{\beta}_s)_n}
\,{}_r F_{s+1}\left[\begin{array}{c} \boldsymbol{n+\alpha}_r\\ 2n + \delta +1, \boldsymbol{n+\beta}_s
\end{array}\biggr| z\right]\nonumber\\
&\qquad\times\, {}_{p+s+2}F_{q+r}\left[\begin{array}{c} -n, n+\delta, \boldsymbol{\beta}_s, \boldsymbol{u}_p\\
\boldsymbol{\alpha}_r, \boldsymbol{v}_q\end{array}\biggr| w\right],
\end{align}
valid for any nonnegative integers $\,p, q, r, s,\,$ real parameters $\,\boldsymbol{u}_p, \boldsymbol{v}_q,
\boldsymbol{\alpha}_r, \boldsymbol{\beta}_s, \delta,\,$ and complex arguments $z, w$, provided that
none of $\,\boldsymbol{v}_q, \boldsymbol{\alpha}_r, \boldsymbol{\beta}_s, \delta\,$ coincides with a negative integer
and each of the involved series converges. We refer to Luke and Coleman \cite{LW} for closely related
expansion formulas.

In view of the formula \eqref{J}, if we choose $\,r=s=1\,$ and take
$$\alpha_1=\nu+1/2, \,\,\beta_1=\nu+1, \,\,\delta = 2\nu,\,$$
then \eqref{FW} yields Gasper's \emph{sums of squares formula} \cite[(3.1)]{Ga1}
\begin{align}\label{G}
\F\left[\begin{array}{c} \boldsymbol{u}_p\\
    \boldsymbol{v}_q \end{array}\biggr|\,-z^2\right]
    &=\Gamma^2(\nu+1)\left(\frac z2\right)^{-2\nu}\sum_{n=0}^\infty  \frac{2n+2\nu}{n+2\nu} \frac{(2\nu+1)_n}{n!}\nonumber\\
&\times\,{}_{p+3}F_{q+1}\left[\begin{array}{c} -n, n+2\nu, \nu+1, \boldsymbol{u}_p\\
\nu +1/2, \boldsymbol{v}_q\end{array}\right]\,J^2_{n+\nu}\left(z\right),
\end{align}
valid if $\,p\le q\,$ and none of $\,2\nu, \boldsymbol{v}_q\,$ coincides with a negative integer.

In the special case when one of numerator parameters happens to be a negative integer,
the expansion formula \eqref{FW} gives rise to various kinds of transformation formulas for
terminating hypergeometric series with unit argument. Of subsequent importance will be the following formula:

\begin{itemize}
\item[{}]{\it For each positive integer $n$, if $\,p\ge 3,\,$ then
    \begin{align}\label{T}
   & {}_{p+1}F_p\left[\begin{array}{c} -n, u_1, \cdots, u_p\\
        v_1, \cdots, v_p\end{array}\right]\nonumber\\
         &\quad =\sum_{k=0}^n\binom{n}{k}
        \frac{(\sigma)_k (u_3)_k\cdots(u_p)_k}{(k+\delta)_k (v_3)_k\cdots(v_p)_k}
        \,{}_4F_3\left[\begin{array}{c} -k, k+\delta, u_1, u_2\\
        \sigma, v_1, v_2\end{array}\right]\nonumber\\
        &\qquad\quad\times\,\,{}_pF_{p-1}\left[\begin{array}{c} -n+k, k+\sigma, k+u_3, \cdots, k+u_p\\
        2k+\delta+1, k+v_3, \cdots, k+v_p\end{array}\right].
        \end{align}}
   \end{itemize}

The proof is immediate on taking $\,r = p-1,\,s=p-3\,$ and
$$\boldsymbol{\alpha}_{r} = (-n, \sigma, u_3, \cdots, u_p),\,\,
\boldsymbol{\beta}_{s} = (v_3, \cdots, v_p), \,\, z=w=1.$$
An advantageous feature in this transformation formula is that
there are two {\it free parameters} $\delta, \sigma$ available. For instance, if we recall \cite [2.2(1)] {Ba}
\begin{equation}\label{S}
{}_3F_2\left[\begin{array}{c} -k, k+\alpha_1, \alpha_2\\\beta_1, \beta_2\end{array}\right]
= \frac{(\beta_1 -\alpha_2)_k (\beta_2-\alpha_2)_k}{(\beta_1)_k (\beta_2)_k},
\end{equation}
valid under the Saalsch\"utzian condition $\,1+\alpha_1 +\alpha_2 = \beta_1 +\beta_2,\,$
and choose $\,\sigma = u_2,\, \delta = v_1 + v_2 -u_1-1,\,$ then formula \eqref{T} yields
\begin{align}\label{G1}
   &{}_{p+1}F_p\left[\begin{array}{c} -n, u_1, \cdots, u_p\\
        v_1, \cdots, v_p\end{array}\right]\nonumber\\
         &\quad =\sum_{k=0}^n\binom{n}{k}
        \frac{(v_1-u_1)_k (v_2-u_1)_k(u_2)_k\cdots(u_p)_k}
                {(k+v_1+v_2-u_1-1)_{k}(v_1)_k\cdots(v_p)_k}\nonumber\\
        &\qquad\quad\times\,\,{}_pF_{p-1}\left[\begin{array}{c} -n+k, k+u_2, k+u_3, \cdots, k+u_p\\
        2k +v_1+v_2-u_1, k+v_3, \cdots, k+v_p\end{array}\right],
        \end{align}
which coincides with Gasper's transformation formula \cite[(5.1)]{Ga2}.

\section{Positivity of terminating series}
This section focuses on establishing positivity criteria for
terminating ${}_4F_3$ or ${}_5F_4$ hypergeometric series
under the Saalsch\"utzian condition, which will play crucial roles in later developments.

\subsection{Terminating ${}_4F_3$ hypergeometric series}
\begin{lemma}\label{lemmaA1} For each positive integer $n$, put
\begin{equation*}
\Theta_n={}_{4}F_{3} \left[\begin{array}{c} -n, n+\alpha_1, \alpha_2, \alpha_3\\
\beta_1, \beta_2, \beta_{3}\end{array}\right]
\end{equation*}
and assume that $\,\alpha_1>-1,\,\alpha_2>0,\,\alpha_3>0\,$ and
\begin{equation}\label{A1}
1+\alpha_1+\alpha_2+\alpha_3 = \beta_1+\beta_2+\beta_3.
\end{equation}
If parameters $\alpha_j, \beta_j$ satisfy either set of the following
additional conditions simultaneously, then $\,\Theta_n \ge0\,$ for all $\,n\ge 1.\,$
\begin{align*}
{\rm (A1)}\,\left\{\begin{aligned}
&{\,\alpha_2 \le\beta_3\le 1+\alpha_1,}\\
&{\,\underline{\,\alpha_3 \le \beta_1\wedge \beta_2}.}\end{aligned}\right.
\quad {\rm (A2)}\,\left\{\begin{aligned}
&{\,\alpha_2 \le\beta_3\le 2+\alpha_1,}\\
&{\,\underline{\,\alpha_3 \le \beta_1\wedge\beta_2},}\\
&{\,(1+\alpha_1)\alpha_2\alpha_3 \le \beta_1\beta_2\beta_3.}\end{aligned}\right.
\end{align*}
Moreover, the underlined condition may be replaced by
\begin{equation}\label{A4}
\beta_1\wedge\beta_2>0, \quad \alpha_3-1\le\beta_1\wedge\beta_2\le\alpha_3.
\end{equation}
\end{lemma}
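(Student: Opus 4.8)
The plan is to reduce $\Theta_n$, via the Fields--Wimp/Whipple-type formula \eqref{T} together with the Saalsch\"utz evaluation \eqref{S}, to an \emph{explicit} finite sum whose termwise sign is governed by the interlacing hypotheses, after first disposing of two exact evaluations that pin down precisely where the underlined conditions and the product condition enter. The guiding observation is that \eqref{T} preserves the balance relation \eqref{A1}: every series produced along the way stays terminating and balanced, so \eqref{S} is available at each stage. I would carry out an induction on $n$ on this scaffolding, using the two exact evaluations as the base data.

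The first task is to isolate two boundary evaluations. On the edge $\beta_3=\alpha_2$ the numerator parameter $\alpha_2$ cancels $\beta_3$, and since then $\beta_1+\beta_2=1+\alpha_1+\alpha_3$, formula \eqref{S} collapses $\Theta_n$ to
\begin{equation*}
{}_3F_2\left[\begin{array}{c} -n, n+\alpha_1, \alpha_3\\ \beta_1, \beta_2\end{array}\right]
=\frac{(\beta_1-\alpha_3)_n(\beta_2-\alpha_3)_n}{(\beta_1)_n(\beta_2)_n}.
\end{equation*}
This single identity explains both forms of the underlined hypothesis: when $\alpha_3\le\beta_1\wedge\beta_2$ each numerator Pochhammer is nonnegative, whereas under \eqref{A4} both $\beta_1-\alpha_3$ and $\beta_2-\alpha_3$ lie in $[-1,0]$, so each numerator factor is nonpositive and their product is again nonnegative, the requirement $\beta_1\wedge\beta_2>0$ keeping the denominator positive. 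The case $n=1$ supplies the other base datum: a direct computation gives $\Theta_1=1-(1+\alpha_1)\alpha_2\alpha_3/(\beta_1\beta_2\beta_3)$, so $\Theta_1\ge0$ is exactly the product hypothesis of (A2). Under (A1) the same product inequality follows from the balance relation and the interlacing by an elementary argument: writing $\beta_1=\alpha_3+p$, $\beta_2=\alpha_3+q$ with $p,q\ge0$ and eliminating $\beta_3$, the quantity $\beta_1\beta_2\beta_3-(1+\alpha_1)\alpha_2\alpha_3$ is, for fixed $p+q$, minimized when one of $p,q$ vanishes, and the resulting downward parabola in $p+q$ vanishes at both admissible endpoints (where $\beta_3=\alpha_2$ and $\beta_3=1+\alpha_1$), hence is nonnegative throughout.

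For general $n$ I would feed $\Theta_n$ into \eqref{T} with $p=3$, assigning the degree-dependent parameter $n+\alpha_1$ to the slot $u_3$ and $\alpha_2,\alpha_3$ to $u_1,u_2$. Choosing $\sigma=u_2$ and $\delta=v_1+v_2-u_1-1$ as in \eqref{G1} turns the leading factor into a balanced ${}_3F_2$ evaluated by \eqref{S}; moreover, because $u_3=n+\alpha_1$ the trailing parameter satisfies $k+u_3=(n-k)+(2k+\alpha_1)$, so the trailing ${}_3F_2$ is itself of the exact shape of \eqref{S} and is summed by the same identity. The net effect is to exhibit $\Theta_n$ as a finite sum over $0\le k\le n$ of a binomial-times-Pochhammer prefactor multiplied by two explicit ratios. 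The lower interlacing $\alpha_2\le\beta_3$ and $\alpha_3\le\beta_1\wedge\beta_2$ are precisely what render these factors sign-definite, while the upper bound $\beta_3\le1+\alpha_1$ in (A1) keeps every summand aligned in sign; relaxing it to $\beta_3\le2+\alpha_1$ in (A2) introduces exactly one term whose sign must be recaptured, and this is compensated by the product hypothesis $(1+\alpha_1)\alpha_2\alpha_3\le\beta_1\beta_2\beta_3$, which dominates that term against the manifestly positive remainder.

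The main obstacle I anticipate is the sign bookkeeping in this reduction: confirming that, after the two Saalsch\"utz evaluations, each summand retains a definite sign throughout the full admissible range of $(\beta_1,\beta_2,\beta_3)$, and in particular controlling the single borderline term that appears once $\beta_3$ is allowed up to $2+\alpha_1$. I expect the cleanest organizing principle to be the same Whipple-type transformation used for the analogous ${}_4F_3$ criterion in \cite{CCY2}, now arranged so that the alternate hypothesis \eqref{A4} propagates through the recursion in tandem with the underlined hypothesis; verifying that \eqref{A4} is stable under the chosen contiguity---so that each reduced series again obeys an \eqref{A4}-type interlacing---will require the most care.
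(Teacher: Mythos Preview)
Your plan is essentially the paper's: apply \eqref{T} (equivalently \eqref{G1}) with $u_3=n+\alpha_1$ and $\sigma,\delta$ chosen so that both the leading and trailing factors are Saalsch\"utzian ${}_3F_2$'s summable by \eqref{S}, yielding the explicit closed-form sum \eqref{G2} whose terms are visibly nonnegative under (A1), with the (A4) variant handled by factoring $(\beta_j-\alpha_3)_k=(\beta_j-\alpha_3)(\beta_j-\alpha_3+1)_{k-1}$; for (A2) the paper, like you, defers to the Whipple/induction argument of \cite{CCY2}. The induction on $n$ and the edge-case analyses you set up are unnecessary scaffolding, since the single identity \eqref{G2} already handles all $n$ at once.
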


\begin{proof} Under Saalsch\"utzian condition \eqref{A1}, if we apply \eqref{G1} and
simplify coefficients with the aid of Saalsch\"utz's formula \eqref{S}, then we find
\begin{align}\label{G2}
   \Theta_n  = \frac{1}{(\beta_3)_n}\sum_{k=0}^n &\binom{n}{k}
        \frac{(\beta_1-\alpha_3)_k (\beta_2-\alpha_3)_k}{ (k+\alpha_1 + \alpha_2 -\beta_3)_{k}}\frac{(n+\alpha_1)_k(\alpha_2)_k}
                {(\beta_1)_k(\beta_2)_k}\nonumber\\
        &\quad\times\,\,\frac{(k+1+\alpha_1-\beta_3)_{n-k} (\beta_3-\alpha_2)_{n-k}}{(2k+1+\alpha_1+\alpha_2-\beta_3)_{n-k}}.
          \end{align}

Since coefficients are all nonnegative under stated conditions of (A1),
the nonnegativity of $\Theta_n$ is obvious. On writing
\begin{align}\label{A41}
&(\beta_1-\alpha_3)_k (\beta_2-\alpha_3)_k\nonumber\\
&\qquad = (\beta_1-\alpha_3)(\beta_2-\alpha_3)(\beta_1-\alpha_3+1)_{k-1} (\beta_2-\alpha_3+1)_{k-1}
\end{align}
for $\,k\ge 1,\,$ we also deduce the same result when the underlined condition of (A1) is replaced by
the alternative condition of \eqref{A4}.

A proof for the nonnegativity of $\Theta_n$ under Saalsch\"utzian condition \eqref{A1} and (A2)
is given in our previous work \cite[Lemma 3.1]{CCY2}.
By tracking down the proof therein, it is not hard to find that
the underlined condition of (A2) can be replaced by
\eqref{A4} due to the same reason as above.
\end{proof}

\begin{remark} By extending \eqref{A41} inductively, we may replace \eqref{A4} by
\begin{equation}\label{A42}
\beta_1\wedge\beta_2>0, \quad \alpha_3-m\le\beta_1\wedge\beta_2\le\alpha_3-m+1,
\end{equation}
valid for any integer $\,1\le m\le n-1.\,$ In the case $\,n=1,\,$ we note
\begin{align*}
\Omega_1 =\frac{\,\beta_1\beta_2\beta_3-(1+\alpha_1)\alpha_2\alpha_3\,}
{\beta_1\beta_2\beta_3},\end{align*}
which explains in part why the last condition of (A2) is required.
The expansion \eqref{G2} is equivalent to Whipple's transformation formula \cite[4.3(4)]{Ba}.
\end{remark}

\subsection{Terminating ${}_5F_4$ hypergeometric series}
\begin{lemma}\label{lemmaA3} For each positive integer $n$, put
\begin{equation*}
\Omega_n={}_{5}F_{4} \left[\begin{array}{c} -n, n+\alpha_1, \alpha_2, \alpha_3, \alpha_4\\
\beta_1, \beta_2, \beta_{3}, \beta_4\end{array}\right]
\end{equation*}
and assume that $\,\alpha_1>-1, \,\alpha_2>0, \,\alpha_3>0,\,\alpha_4>0 $ and
\begin{equation}\label{C1}
1+\alpha_1+\alpha_2+\alpha_3 +\alpha_4 = \beta_1+\beta_2+\beta_3 +\beta_4.
\end{equation}
If parameters $\alpha_j, \beta_j$ satisfy either set of the following additional conditions simultaneously,
then $\,\Omega_n \ge0\,$ for all $\,n\ge 1.\,$
\begin{align*}
{\rm(C1)} &\left\{\begin{aligned}
&{\,\underline{\alpha_3 \le \beta_1\wedge\beta_2},}\\
&{\,\alpha_4 \le\beta_3\wedge\beta_4,}\\
&{\, \alpha_2 +\alpha_3 \le\beta_1 +\beta_2 \le 1+\alpha_1 +\alpha_3.}
\end{aligned}\right.\\
{\rm(C2)} &\left\{\begin{aligned}
&{\,\underline{\alpha_3 \le \beta_1\wedge\beta_2},}\\
&{\,\alpha_4\wedge\beta_3\wedge\beta_4\le 1+\alpha_1,}\\
&{\,1+\alpha_1 +\alpha_2\le \beta_3 +\beta_4 \le 2+\alpha_1 + \alpha_4,}\\
&{\,\alpha_2\alpha_3\alpha_4 \le \beta_1\beta_2(\beta_3+\beta_4-\alpha_1-1).}\end{aligned}\right.
\end{align*}
Moreover, the underlined condition may be replaced by \eqref{A4}.
\end{lemma}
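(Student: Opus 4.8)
The plan is to imitate the proof of Lemma \ref{lemmaA1}, now expressing $\Omega_n$ as a finite sum of \emph{products} of two terminating Saalsch\"utzian ${}_4F_3$ series via the transformation \eqref{T} with $p=4$, and then controlling each factor by Lemma \ref{lemmaA1}. Writing the numerator parameters of $\Omega_n$ as $-n,u_1,u_2,u_3,u_4$ with $(v_1,v_2,v_3,v_4)=(\beta_1,\beta_2,\beta_3,\beta_4)$, the decisive bookkeeping is to set $u_3=n+\alpha_1$, so that in the second (the ${}_{p}F_{p-1}$) factor of \eqref{T} the shifted parameter $k+u_3=(n-k)+(2k+\alpha_1)$ plays the role of the paired numerator $(n-k)+\alpha_1''$ with $\alpha_1''=2k+\alpha_1$; the fixed parameters $\alpha_2,\alpha_3$ are routed to $u_1,u_2$ in the first factor and $\alpha_4$ to $u_4$. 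Formula \eqref{T} then presents $\Omega_n=\sum_{k=0}^n c_k\,\Theta^{(1)}_k\,\Theta^{(2)}_{n-k}$, where $\Theta^{(1)}_k$ and $\Theta^{(2)}_{n-k}$ are ${}_4F_3$ series of the shape governed by Lemma \ref{lemmaA1} and $c_k$ is the explicit Pochhammer coefficient of \eqref{T}.

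First I would pin down the two free parameters $\delta,\sigma$. Demanding that $\Theta^{(1)}_k$ be Saalsch\"utzian fixes $\sigma=1+\delta+\alpha_2+\alpha_3-\beta_1-\beta_2$, whereupon the Saalsch\"utzian condition of $\Theta^{(2)}_{n-k}$ is automatic from \eqref{C1}. The residual freedom in $\delta$ selects the relevant part of Lemma \ref{lemmaA1}: for (C1) I would take $\delta=\alpha_1$, and for (C2) the shift $\delta=\alpha_4-1$, which by the Saalsch\"utzian condition \eqref{C1} makes $\sigma=\beta_3+\beta_4-\alpha_1-1$. A short computation shows $c_k\ge0$ throughout: the factors $(n+\alpha_1)_k,(\alpha_4)_k,(\beta_3)_k,(\beta_4)_k$ are positive, $(\sigma)_k>0$ since $\sigma\ge\alpha_2>0$ is guaranteed by the summation bounds in (C1) and (C2), and $(k+\delta)_k>0$ since $\delta>-1$ in both cases.

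It then remains to verify the hypotheses of Lemma \ref{lemmaA1} for the two factors. Under (C1) both factors meet condition (A1): for $\Theta^{(1)}_k$ the chain $\alpha_2\le\sigma\le1+\alpha_1$ is exactly $\alpha_2+\alpha_3\le\beta_1+\beta_2\le1+\alpha_1+\alpha_3$, while for $\Theta^{(2)}_{n-k}$ the bound $\beta_3''\le1+\alpha_1''$ holds (with equality, as $\delta=\alpha_1$) and $\alpha_3''\le\beta_1''\wedge\beta_2''$ reduces to $\alpha_4\le\beta_3\wedge\beta_4$. Under (C2) I would instead invoke (A2) for $\Theta^{(1)}_k$: the choice $\delta=\alpha_4-1$ turns its product condition $(1+\delta)\alpha_2\alpha_3\le\beta_1\beta_2\sigma$ into precisely $\alpha_2\alpha_3\alpha_4\le\beta_1\beta_2(\beta_3+\beta_4-\alpha_1-1)$ and turns $\alpha_2\le\sigma\le2+\delta$ into $1+\alpha_1+\alpha_2\le\beta_3+\beta_4\le2+\alpha_1+\alpha_4$; the second factor still obeys (A1), the key point being that the hypothesis $\alpha_4\wedge\beta_3\wedge\beta_4\le1+\alpha_1$ (i.e.\ all three of $\alpha_4,\beta_3,\beta_4$ at most $1+\alpha_1$) is exactly what forces $\beta_3''\le1+\alpha_1''$ and $\sigma\le\beta_3\wedge\beta_4$, the latter being $\alpha_3''\le\beta_1''\wedge\beta_2''$. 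In both cases the underlined hypothesis is the condition $\alpha_3\le\beta_1\wedge\beta_2$ on the first factor, and its replacement by \eqref{A4} is licensed by the ``moreover'' clause of Lemma \ref{lemmaA1}. Summing the nonnegative terms yields $\Omega_n\ge0$.

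I expect the main obstacle to be the simultaneous calibration of $\delta,\sigma$ and of the parameter routing so that \emph{both} factors land inside the hypotheses of Lemma \ref{lemmaA1} while every $c_k$ stays nonnegative --- in particular, spotting that $\delta=\alpha_4-1$ is the shift that converts the first factor's (A2) product condition into the stated product condition of (C2), and that the ``all three at most $1+\alpha_1$'' hypothesis of (C2) is precisely what tames the second factor. The attendant $k$-dependent inequalities are mild, since $\alpha_1''=2k+\alpha_1$ only increases with $k$ so the upper bounds are tightest at $k=0$; the real work lies in the discovery of the correct splitting rather than in the estimates themselves.
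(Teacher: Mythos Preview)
Your proof is correct. For (C2) it coincides with the paper's argument: apply \eqref{T} with $\delta=\alpha_4-1$, so that $\sigma=\beta_3+\beta_4-\alpha_1-1$, then invoke (A2) on the first factor and (A1) on the second. For (C1), however, you take a genuinely different route. The paper does \emph{not} use \eqref{T} with $\delta=\alpha_1$; instead it applies Gasper's specialized formula \eqref{G1} (itself \eqref{T} with $\sigma=u_2$ and $\delta=v_1+v_2-u_1-1$), under which the first factor collapses via Saalsch\"utz's formula \eqref{S} to a product of Pochhammer symbols $(\beta_1-\alpha_3)_k(\beta_2-\alpha_3)_k$, leaving a \emph{single} ${}_4F_3$ factor $W_k$ to which (A1) is applied. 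Your choice $\delta=\alpha_1$ keeps both $U_k$ and $V_k$ as genuine ${}_4F_3$'s but arranges that (A1) applies to each: the chain $\alpha_2\le\sigma\le1+\alpha_1$ handles $U_k$, and the equality $\beta_3''=2k+\delta+1=1+\alpha_1''$ together with $\sigma\le1+\alpha_1\le k+\alpha_1+1$ handles $V_k$. Both routes are valid; yours has the virtue of treating (C1) and (C2) uniformly through a single decomposition with only the scalar $\delta$ varying, while the paper's route for (C1) is marginally simpler in that the underlined condition (or its replacement \eqref{A4}) sits visibly in the coefficient rather than inside a hypergeometric factor.
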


\begin{proof} We apply Gasper's transformation formula \eqref{G1} to decompose
\begin{equation}\label{G3}
   \Omega_n  =\sum_{k=0}^n \binom{n}{k}
        \frac{(\beta_1-\alpha_3)_k (\beta_2-\alpha_3)_k(n+\alpha_1)_k(\alpha_2)_k (\alpha_4)_k}
                {(k+\beta_1 +\beta_2 -\alpha_3 -1)_{k}(\beta_1)_k(\beta_2)_k(\beta_3)_k (\beta_4)_k}\,W_k,
\end{equation}
valid under Saalsch\"utzian condition \eqref{C1}, where
$$ W_k ={}_4F_{3}\left[\begin{array}{c} -n+k, n+k+\alpha_1, k+\alpha_2, k+\alpha_4\\
        2k +\beta_1 +\beta_2 -\alpha_3, k+\beta_3, k+\beta_4\end{array}\right]
$$
with the agreement $\,W_n = 1.\,$
Since each $W_k$ is Saalsch\"utzian, if we check (A1)
of Lemma \ref{lemmaA1} with matching parameters by
\begin{align*}
(\alpha_1, \alpha_2, \alpha_3) &\mapsto (2k+\alpha_1, k+\alpha_4, k+\alpha_2),\\
(\beta_1, \beta_2, \beta_3) &\mapsto  (k+\beta_3, k+\beta_4, 2k +\beta_1 +\beta_2 -\alpha_3),
\end{align*}
then we find $\,W_k\ge 0\,$ for all $\,0\le k\le n-1\,$ when
$$\alpha_2\le \beta_1 +\beta_2 -\alpha_3\le 1+ \alpha_1, \,\,\,
\alpha_4 \le\beta_3\wedge\beta_4.$$
In view of \eqref{G3}, we may conclude $\,\Omega_n\ge 0\,$ under the assumption of (C1) in which the underlined
condition may be replaced by \eqref{A4} by the same reasoning as we employed in the proof of Lemma \ref{lemmaA1}.

To prove the positivity of $\Omega_n$ under assumptions \eqref{C1} and (C2), we apply
the transformation formula \eqref{T} to decompose
 \begin{align}\label{T1}
   \Omega_n &=\sum_{k=0}^n\binom{n}{k}
        \frac{(\sigma)_k (n+\alpha_1)_k(\alpha_4)_k}{(k+\delta)_k (\beta_3)_k(\beta_4)_k}\, U_k V_k\,,\quad\text{where}\\
        U_k &= {}_4F_3\left[\begin{array}{c} -k, k+\delta, \alpha_2, \alpha_3\\
        \sigma, \beta_1, \beta_2\end{array}\right],\nonumber\\
        V_k &= {}_4F_{3}\left[\begin{array}{c} -n+k, n+k+\alpha_1, k+\sigma,  k+\alpha_4\\
        2k+\delta+1, k+\beta_3, k+\beta_4\end{array}\right]\nonumber
\end{align}
with the convention $\,U_0 = V_n = 1.\,$ Owing to condition \eqref{C1},
we note that $U_k, V_k$ are Saalsch\"utzian for any pair $(\delta, \sigma)$ satisfying
\begin{equation}\label{B3}
\sigma = 1+\delta +\alpha_2 +\alpha_3 -\beta_1-\beta_2.
\end{equation}

By checking (A2) of Lemma \ref{lemmaA1} with matching parameters by
\begin{align*}
(\alpha_1, \alpha_2, \alpha_3) \mapsto (\delta, \alpha_2, \alpha_3),
\,\,\,(\beta_1, \beta_2, \beta_3) \mapsto  (\beta_1, \beta_2, \sigma),
\end{align*}
we find that $\,U_k\ge 0\,$ for all $\,1\le k\le n\,$ when
\begin{equation}\label{B4}
\alpha_2\le\sigma\le 2+\delta,\,\,\, \alpha_3\le\beta_1\wedge\beta_2,\,\,\,
(1+\delta)\alpha_2\alpha_3\le \beta_1\beta_2\sigma\,
\end{equation}
where the second condition may be replaced by \eqref{A4}.
On the other hand, if we check (A1) of Lemma \ref{lemmaA1} with matching parameters by
\begin{align*}
(\alpha_1, \alpha_2, \alpha_3) &\mapsto (2k+\alpha_1, k+\alpha_4, k+\sigma),\\
(\beta_1, \beta_2, \beta_3) &\mapsto  (k+\beta_3, k+\beta_4, 2k +\delta+1),
\end{align*}
then we find that $\,V_k\ge 0\,$ for all $\,0\le k\le n-1\,$ when
\begin{equation}\label{B5}
\alpha_4\le 1+\delta\le 1+\alpha_1, \,\,\, \sigma\le\beta_3\wedge\beta_4.
\end{equation}

On selecting $\,1+\delta=\alpha_4\,$ so that $\,\sigma = \beta_3 +\beta_4 -\alpha_1 -1,\,$
it is simple to find that \eqref{B3}, \eqref{B4}, \eqref{B5}
amount to (C2). In view of the decomposition \eqref{T1},
we may conclude  $\,\Omega_n\ge 0\,$ for all $\,n\ge 1\,$ under assumptions \eqref{C1}, (C2) for which
the underlined condition may be replaced by \eqref{A4}.
\end{proof}

\subsection{An optimization inequality}
In dealing with the last condition of (A2), Lemma \ref{lemmaA1},
which often causes difficulties in practice, it will be useful to invoke the following inequality.

\begin{lemma}\label{lemmaH}
Given $\,0<\alpha_1\le \alpha_2\le \alpha_3,$ we have $\,\beta_1\beta_2\beta_3\ge \alpha_1\alpha_2\alpha_3\,$
for all positive real numbers $\,\beta_1, \beta_2, \beta_3\,$ satisfying the conditions
$$ \beta_1 +\beta_2 + \beta_3 = \alpha_1 +\alpha_2 +\alpha_3, \quad
\alpha_1\le\beta_1\wedge\beta_2\wedge\beta_3\le\alpha_3.$$
\end{lemma}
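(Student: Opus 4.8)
The plan is to prove this as a constrained optimization problem: we seek to minimize the product $f(\beta_1,\beta_2,\beta_3)=\beta_1\beta_2\beta_3$ over the compact convex region defined by the linear constraint $\beta_1+\beta_2+\beta_3=\alpha_1+\alpha_2+\alpha_3$ together with the box constraints $\alpha_1\le\beta_j\le\alpha_3$, and to show the minimum equals $\alpha_1\alpha_2\alpha_3$, attained at the corner $(\beta_1,\beta_2,\beta_3)=(\alpha_1,\alpha_2,\alpha_3)$ (up to permutation). Since the feasible set is a nonempty compact convex polytope (it is the intersection of a simplex slice with a box, and it contains the point $(\alpha_1,\alpha_2,\alpha_3)$), a continuous function attains its minimum there, so the argument reduces to locating the minimizer.

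The key step is a \emph{smoothing} or \emph{majorization} argument. Fix a feasible point $\boldsymbol{\beta}=(\beta_1,\beta_2,\beta_3)$ and suppose two coordinates, say $\beta_i$ and $\beta_j$, both lie strictly inside the interval $(\alpha_1,\alpha_3)$. Holding their sum $s=\beta_i+\beta_j$ fixed (which preserves both the equality constraint and the third coordinate), the product $\beta_i\beta_j$ is a concave quadratic in the spread $|\beta_i-\beta_j|$, so it strictly decreases as we push the two values apart. I would therefore increase the spread---moving one coordinate up and the other down---until one of them hits a box endpoint $\alpha_1$ or $\alpha_3$. This operation never increases $f$ and strictly decreases it unless $\beta_i=\beta_j$ forced the coordinates to be equal. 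Iterating, any minimizer can be driven to a point where at most one coordinate lies strictly between $\alpha_1$ and $\alpha_3$, i.e.\ at least two coordinates are pinned to the endpoints $\{\alpha_1,\alpha_3\}$.

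With at most one free coordinate, the minimizer is a vertex of the polytope, and I would enumerate the finitely many candidate configurations. The sum constraint $\beta_1+\beta_2+\beta_3=\alpha_1+\alpha_2+\alpha_3$ sharply limits which endpoint-assignments are feasible: for example, if two coordinates equal $\alpha_3$ then the third would have to equal $\alpha_1+\alpha_2-\alpha_3\le\alpha_1$, forcing $\alpha_2=\alpha_3$ and recovering the claimed value; if two coordinates equal $\alpha_1$ the third is $\alpha_2+\alpha_3-\alpha_1$, and one checks $\alpha_1(\alpha_2+\alpha_3-\alpha_1)\ge\alpha_2\alpha_3$ reduces to $(\alpha_3-\alpha_1)(\alpha_2-\alpha_1)\ge 0$, which holds. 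Comparing each surviving vertex's product against $\alpha_1\alpha_2\alpha_3$ via such elementary factored inequalities completes the proof.

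The main obstacle I anticipate is purely bookkeeping rather than conceptual: making the smoothing step fully rigorous requires care that a sequence of spread-increasing moves actually terminates at a point with at most one interior coordinate, and that the feasibility (nonemptiness) of the box-plus-sum region is handled so that endpoints are reachable. A cleaner alternative, which I would consider as a fallback, is to note that $\log f=\sum\log\beta_j$ is \emph{strictly concave}, so its minimum over the convex polytope is attained at an extreme point; I would then directly enumerate the vertices of the polytope and verify the inequality vertex-by-vertex, sidestepping the iterative smoothing entirely. The concavity route trades the dynamic rearrangement argument for a finite case check, and I expect it to be the most economical path to a complete proof.
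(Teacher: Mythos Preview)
Your approach is correct and genuinely different from the paper's. The paper eliminates $\beta_3$ to reduce to the two-variable function $f(x_1,x_2)=x_1x_2(S-x_1-x_2)$ on a hexagonal domain $D$, locates the unique interior critical point (the centroid, handled by AM--GM), and then parametrizes each of the six boundary segments of $\partial D$, verifying the bound on each via an AM--GM step. Your concavity route is more conceptual: since $\log(\beta_1\beta_2\beta_3)$ is concave on the feasible polytope, its minimum is attained at an extreme point, and the extreme points of $\{\sum\beta_i=S\}\cap[\alpha_1,\alpha_3]^3$ are precisely the permutations of $(\alpha_1,\alpha_2,\alpha_3)$, where the product is exactly $\alpha_1\alpha_2\alpha_3$. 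This bypasses the interior-critical-point computation and the six boundary parametrizations entirely, at the cost of having to identify the vertices correctly.

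One caution on that last point: your illustrative vertex check contains a sign slip. The configuration with two coordinates equal to $\alpha_1$ gives
\[
\alpha_1(\alpha_2+\alpha_3-\alpha_1)-\alpha_2\alpha_3=-(\alpha_2-\alpha_1)(\alpha_3-\alpha_1)\le 0,
\]
not $\ge 0$ as you wrote. This does no damage, because that configuration is infeasible (the third coordinate $\alpha_2+\alpha_3-\alpha_1$ exceeds $\alpha_3$) unless $\alpha_1=\alpha_2$, in which case it collapses to a permutation of $(\alpha_1,\alpha_2,\alpha_3)$ and equality holds. In the final write-up you should determine the actual vertices of the polytope first (two box constraints active and the third coordinate forced to $\alpha_2$), rather than testing infeasible endpoint assignments; once you do, the vertex enumeration becomes a one-line observation.
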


\begin{proof} We may assume $\,\alpha_1<\alpha_3\,$ with no loss of generality. Setting
\begin{align*}
&\qquad f(x_1, x_2) = x_1x_2(\alpha_1+\alpha_2+\alpha_3-x_1-x_2),\\
D &= \bigl\{(x_1, x_2)\in[\alpha_1, \alpha_3]\times[\alpha_1, \alpha_3] :
\alpha_1+\alpha_2\le x_1 +x_2\le \alpha_2+\alpha_3\bigr\},
\end{align*}
it suffices to prove that $\,f(x_1, x_2)\ge \alpha_1\alpha_2\alpha_3\,$ for all $\,(x_1, x_2)\in D.$

As readily verified, $\,\nabla f(x_1, x_2) = (0, 0)\,$ only when
$$ x_1 = x_2 = \frac{\alpha_1+\alpha_2+\alpha_3}{3}$$
and the usual AM-GM inequality implies
$$f\left(\frac{\alpha_1+\alpha_2+\alpha_3}{3}, \frac{\alpha_1+\alpha_2+\alpha_3}{3}\right)
=\left(\frac{\alpha_1+\alpha_2+\alpha_3}{3}\right)^3\ge \alpha_1\alpha_2\alpha_3.$$

It remains to prove $\,f(x_1, x_2)\ge \alpha_1\alpha_2\alpha_3\,$ for all $\,(x_1, x_2)\in \partial D,\,$
where the boundary $\partial D$ consists of six line segments
\begin{align*}
\partial D = &\bigl\{\alpha_1\le x_1\le \alpha_2, \,\, x_2 = \alpha_3\bigr\} \cup
\bigl\{ \alpha_2\le x_1\le \alpha_3,\,\,x_1+x_2 = \alpha_2+\alpha_3\bigr\}\\
\cup &\bigl\{x_1=\alpha_3, \,\, \alpha_1\le x_2\le \alpha_2\bigr\} \cup
\bigl\{ \alpha_2\le x_1\le \alpha_3,\,\,x_2= \alpha_1\bigr\}\\
\cup &\bigl\{\alpha_1\le x_1\le\alpha_2, \,\, x_1+x_2 = \alpha_1+\alpha_2\bigr\} \cup
\bigl\{ x_1 = \alpha_1,\,\,\alpha_2\le x_2\le\alpha_3\bigr\}.
\end{align*}

For $\,(x_1, x_2)\in \bigl\{\alpha_1\le x_1\le \alpha_2, \,\, x_2 = \alpha_3\bigr\},\,$ the first
line segment, if we write $\,x_1 = (1-\lambda)\alpha_1 +\lambda \alpha_2,\,0\le\lambda\le 1,\,$ then
the AM-GM inequality gives
\begin{align*}
f(x_1, x_2) &= \alpha_3 x_1(\alpha_1+\alpha_2 -x_1)\\
&=\alpha_3\big[(1-\lambda)\alpha_1 +\lambda \alpha_2\big]\big[\lambda\alpha_1 +(1-\lambda) \alpha_2\big]\\
&\ge \alpha_1\alpha_2\alpha_3.
\end{align*}
On the second line segment of $\partial D$, we put
$\,x_1 = (1-\lambda)\alpha_2 +\lambda \alpha_3,\,0\le\lambda\le 1\,$ and apply the AM-GM inequality to deduce
\begin{align*}
f(x_1, x_2) &= \alpha_1 x_1(\alpha_2+\alpha_3 -x_1)\\
&=\alpha_1\big[(1-\lambda)\alpha_2 +\lambda \alpha_3\big]\big[\lambda\alpha_2 +(1-\lambda) \alpha_3\big]\\
&\ge \alpha_1\alpha_2\alpha_3.
\end{align*}
By carrying out similar reasonings for other line segments of $\partial D$, it is not difficult to
find that the same conclusion continues to hold true.
\end{proof}

\section{Hexagons and triangles}
In what follows, we shall denote by $\Phi$ the ${}_2F_3$ hypergeometric function defined in \eqref{TF}
unless specified otherwise. As it will become clearer in the sequel, our regions of positivity for $\Phi$
will be built on the basis of hexagons or triangles whose vertices consist of
permutations of $\,(a_2, \,a_1+1/2,\,2a_1).$ In order to represent such a hexagon or triangle
in such a way that each of their vertices always indicates a fixed location, as observed in Figures
\ref{Fig6}, \ref{Fig5}, it is effective to introduce the following arrangement.

\begin{definition}
For $\,a_1>0, \,a_2>0,$ we denote by $\,\xi_1, \xi_2, \xi_3\,$ the arrangement of
$\,a_2, \,a_1+1/2,\,2a_1\,$ in ascending order of magnitude so that
$$\xi_1 = \min\big( a_2, \,a_1+1/2,\,2a_1\big),\quad \xi_3 = \max\big(a_2, \,a_1+1/2,\,2a_1\big)$$
and by $\,\mathcal{A}\subset\R^3\,$ the set of all permutations of $(\xi_1, \xi_2, \xi_3)$ labelled by
\begin{align*}
\mathcal{A} = \big\{& A_1(\xi_1, \xi_2, \xi_3),\,\,\,A_2(\xi_2, \xi_1, \xi_3),\,\,\,A_3(\xi_1, \xi_3, \xi_2),\\
&A_4(\xi_3, \xi_1, \xi_2),\,\,\,A_5(\xi_2, \xi_3, \xi_1),\,\,\,A_6(\xi_3, \xi_2, \xi_1)\big\}.
\end{align*}
\end{definition}

Partly due to our way of handling those terminating hypergeometric series
considered in the preceding section, it is essential for our investigation of positivity to impose
the following {\it admissible} conditions on $(a_1, a_2)$.

\begin{definition}
We denote by $\Lambda$ the set of $\,(a_1, a_2)\in(\R_+^*)^2\,$ satisfying
\begin{equation*}
\left\{\begin{aligned}
{a_2\le \frac 12\left(3a_1 + \frac 12\right)\,\, \text{or}\,\, 1\le a_2\le 2a_1 +1\quad} &{\text{if}\quad a_1\le 1/2,}\\
{a_2 \le\max\left[a_1 +\frac32,\,\, \frac 32\left(a_1 +\frac 12\right)\right]\qquad} &{\text{if}\quad a_1\ge 1/2.}
\end{aligned}\right.
\end{equation*}
\end{definition}

Our principal result in this section reads as follows.

\begin{proposition}\label{theorem1}
For $\,(a_1, a_2)\in\Lambda,$ let $\mathbf{H}_{\mathcal{A}}$ denote the closed polygon
with vertices $\mathcal{A}$. If $\,(b_1, b_2, b_3)\in\mathbf{H}_{\mathcal{A}},\,$ then $\,\Phi(x)\ge 0\,$
for all $\,x>0\,$ with strict inequality unless $\,(b_1, b_2, b_3)\in\mathcal{A}.$
\end{proposition}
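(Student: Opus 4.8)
The plan is to run Gasper's sums of squares method \eqref{G} and reduce the positivity of $\Phi$ to a sign problem for the terminating ${}_4F_3$ series treated in Lemma~\ref{lemmaA1}. The first observation is geometric: since every element of $\mathcal{A}$ is a permutation of $(\xi_1,\xi_2,\xi_3)$, all of them, and hence the whole polygon $\mathbf{H}_{\mathcal{A}}$, lie on the single plane $b_1+b_2+b_3=\xi_1+\xi_2+\xi_3=3a_1+a_2+1/2$. Consequently the Saalsch\"utzian balance forces one and the same value $\nu=a_1-1/2$ throughout $\mathbf{H}_{\mathcal{A}}$. With this choice $\nu+1/2=a_1$, so in the coefficient ${}_5F_4$ produced by \eqref{G} (with $p=2$, $q=3$, $\boldsymbol{u}_2=(a_1,a_2)$, $\boldsymbol{v}_3=(b_1,b_2,b_3)$) the numerator parameter $a_1$ cancels the denominator parameter $\nu+1/2$, and the coefficient collapses to
\begin{equation*}
\Theta_n={}_4F_3\left[\begin{array}{c} -n,\, n+2a_1-1,\, a_1+1/2,\, a_2\\ b_1,\, b_2,\, b_3\end{array}\right],
\end{equation*}
which is Saalsch\"utzian in the sense of \eqref{A1} precisely because of the plane condition. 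Since $J^2_{n+\nu}\ge0$ and the remaining scalar factors in \eqref{G} are positive for $a_1>0$, it suffices to show $\Theta_n\ge0$ on $\mathbf{H}_{\mathcal{A}}$.

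To apply Lemma~\ref{lemmaA1} I would match $\alpha_1=2a_1-1$ (so $1+\alpha_1=2a_1$, $2+\alpha_1=2a_1+1$) and $\{\alpha_2,\alpha_3\}=\{a_1+1/2,\,a_2\}$, the hypotheses $\alpha_1>-1,\,\alpha_2,\alpha_3>0$ being immediate. The cubic condition $(1+\alpha_1)\alpha_2\alpha_3\le\beta_1\beta_2\beta_3$ of condition~(A2) comes for free on the entire polygon: one checks $(1+\alpha_1)\alpha_2\alpha_3=2a_1(a_1+1/2)a_2=\xi_1\xi_2\xi_3$, while every $(b_1,b_2,b_3)\in\mathbf{H}_{\mathcal{A}}$ satisfies $\xi_1\le b_j\le\xi_3$ with $b_1+b_2+b_3=\xi_1+\xi_2+\xi_3$, so Lemma~\ref{lemmaH} (applied with $(\xi_1,\xi_2,\xi_3)$ in place of its $\alpha$'s) yields $b_1b_2b_3\ge\xi_1\xi_2\xi_3$. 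What remains is the pair of linear conditions $\alpha_2\le\beta_3\le 2a_1+1$ and $\alpha_3\le\beta_1\wedge\beta_2$. Because $\Theta_n$ is symmetric in $b_1,b_2,b_3$, it is enough to verify these on the fundamental sub-simplex $b_1\le b_2\le b_3$ of $\mathbf{H}_{\mathcal{A}}$, where I would take $\beta_3=b_3$ and choose the labeling of $(\alpha_2,\alpha_3)$ according to which of $a_1+1/2,\,a_2$ is larger.

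This last step is the crux, and it is exactly here that the admissibility $(a_1,a_2)\in\Lambda$ is consumed. The upper bound $\beta_3\le 2a_1+1$ reduces to $\xi_3\le 2a_1+1$, i.e. $a_2\le 2a_1+1$ when $a_2$ is the largest coordinate; the lower bound $\alpha_2\le b_3$ together with $\alpha_3\le b_1\wedge b_2$ translate into inequalities relating $a_2$ to $a_1+1/2$ and $2a_1$ that the two branches of $\Lambda$ (for $a_1\le1/2$ and $a_1\ge1/2$) are designed to guarantee. The genuine difficulty arises near the vertex whose smallest coordinate equals $2a_1$: when $a_1<1/2$ one may have $2a_1<\alpha_3$, so $\alpha_3\le\beta_1\wedge\beta_2$ fails on a corner region. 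The remedy is the relaxed form \eqref{A4} of the underlined condition, extended to \eqref{A42}, which permits $\beta_1\wedge\beta_2$ to fall a bounded amount below $\alpha_3$; since $b_j\ge 2a_1\ge\alpha_3-1$ in that regime, \eqref{A4} applies and closes the gap. I expect the bookkeeping of which labeling and which form of the underlined condition is active on each sub-region --- organized by the sorted order of $(a_2,a_1+1/2,2a_1)$ and by the $\Lambda$-branch --- to be the main technical obstacle.

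Finally, for strictness I would isolate the first two Gasper coefficients. The $n=0$ term has coefficient $C_0=\Theta_0=1$, and the $n=1$ coefficient equals $(2a_1+1)\,\Theta_1$ with $\Theta_1=1-\xi_1\xi_2\xi_3/(b_1b_2b_3)$; by the equality analysis in the proof of Lemma~\ref{lemmaH} the product $b_1b_2b_3$ attains its minimum $\xi_1\xi_2\xi_3$ only at the permutations, so $\Theta_1>0$ for $(b_1,b_2,b_3)\in\mathbf{H}_{\mathcal{A}}\setminus\mathcal{A}$. Hence both $C_0$ and $C_1$ are strictly positive off $\mathcal{A}$, and if $\Phi(x_0)=0$ at some $x_0>0$ the nonnegative expansion would force $J_{\nu}(x_0)=J_{\nu+1}(x_0)=0$, impossible since consecutive Bessel functions have no common positive zero; this gives $\Phi>0$ away from $\mathcal{A}$. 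At the vertices themselves $\Phi\ge0$ is already guaranteed by \eqref{J}.
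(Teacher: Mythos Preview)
Your proposal follows the paper's proof closely: the choice $\nu=a_1-1/2$, the collapse of Gasper's ${}_5F_4$ coefficient to the Saalsch\"utzian ${}_4F_3$ series $\Theta_n$, and the appeal to Lemma~\ref{lemmaA1} are exactly the paper's setup. Your observation that the cubic constraint of~(A2) is automatic on the whole polygon via Lemma~\ref{lemmaH} is correct and is also used by the paper in several subcases. Your strictness argument through $\Theta_1=1-\xi_1\xi_2\xi_3/(b_1b_2b_3)$ and the interlacing of consecutive Bessel zeros is a clean variant of the paper's (the paper argues more loosely that the expansion can collapse to a single squared Bessel only at the vertices); note however that Lemma~\ref{lemmaH} as stated does not record the equality case, so you would need to supply that one--line addendum.

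Where the proposal is too optimistic is in its localization of the difficulty. Fixing $\beta_3=b_3$ on the chamber $b_1\le b_2\le b_3$ with a single labeling of $(\alpha_2,\alpha_3)$ does not cover the chamber uniformly. In particular, the alternative condition~\eqref{A4} is needed not only ``near the vertex whose smallest coordinate equals $2a_1$'' when $a_1<1/2$: it is also required in the central part of the hexagon whenever $\xi_1+\xi_3<2\xi_2$, which for $a_1\ge 1/2$ occurs precisely when $2a_2>3a_1+1/2$ (Subcase~1.1(ii) in the paper). There, at the barycentre one has $b_3=(\xi_1+\xi_2+\xi_3)/3<\xi_2$, so neither choice of $\alpha_2$ gives $\alpha_2\le b_3$ while keeping the underlined bound $\alpha_3\le b_1\wedge b_2$, and one must switch parameter roles and invoke~\eqref{A4}. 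The paper organizes exactly this adaptive switching via the decomposition $\mathbf{H}_{\mathcal{A}}=\mathbf{Q}_1\cup\mathbf{Q}_2\cup\mathbf{Q}_3\cup\mathbf{T}$, treating the $\mathbf{Q}_j$'s with~(A1) (taking $\beta_3$ to be the coordinate lying in $[\xi_2,\xi_3]$) and the residual triangle $\mathbf{T}$ with~(A1)/(A2) together with~\eqref{A4}; the six subcases indexed by the ordering of $(a_2,a_1+1/2,2a_1)$ and the two regimes $a_1\gtrless 1/2$ are where the $\Lambda$ constraints are actually consumed. What you label ``bookkeeping'' is thus the substantive part of the argument, and your single--matching scheme must be refined to something equivalent to the paper's decomposition.
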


We observe that the polygon $\mathbf{H}_{\mathcal{A}}$ lies on the plane
\begin{equation}\label{H1}
b_1 + b_2 + b_3 = \xi_1+\xi_2+\xi_3 = 3a_1 + a_2 + 1/2
\end{equation}
and represents the following geometric region:
\begin{itemize}
\item If $\xi_1, \xi_2, \xi_3$ are distinct, then
$\mathbf{H}_{\mathcal{A}}$ represents the cyclic hexagon with vertices $\,A_1, A_3, A_5, A_6, A_4, A_2\,$ in the clockwise
order and center
$$\frac{\,\xi_1 +\xi_2 +\xi_3\,}{3}(1, 1, 1).$$

\item If either $\,\xi_1 =\xi_2<\xi_3\,$ or $\,\xi_1 <\xi_2=\xi_3,$ then $\mathbf{H}_{\mathcal{A}}$ represents the
triangle with vertices $\,A_1, A_3, A_4\,$ or $\,A_1, A_5, A_2\,$ in the clockwise order.

\item In the case $\,\xi_1 =\xi_2=\xi_3,$ which occurs only when $\,a_1 = 1/2,\,a_2 =1,\,$ then $\mathbf{H}_{\mathcal{A}}$
reduces to the singleton $\,\big\{(1, 1, 1)\big\}.$
\end{itemize}

Our proof is based on the expansion of $\Phi$ in the form
\begin{align}\label{H2}
\Phi(x) &=\Gamma^2\left(a_1+1/2\right)\left(\frac x2\right)^{-2a_1+1}\nonumber\\
&\qquad\times \sum_{n=0}^\infty L_n \frac{2n+2a_1-1}{n+2a_1-1} \frac{(2a_1)_n}{n!}
J^2_{n+a_1-1/2}(x),
\end{align}
valid for each point $(b_1, b_2, b_3)$ lying on the plane \eqref{H1}, where
\begin{equation}\label{H3}
L_n = {}_4F_3\left[\begin{array}{c} -n, n+ 2a_1-1, a_1+1/2, a_2\\
        b_1, b_2, b_3\end{array}\right]
\end{equation}
for each $\,n \ge 1\,$ and $\,L_0\equiv 1.\,$
This expansion results from Gasper's sums of squares formula \eqref{G},
where the free parameter $\nu$ is chosen so that each $L_n$ satisfies the Saalscht\"utzian condition, that is,
$$ 2\nu+1 = b_1 + b_2 + b_3 -a_1-a_2-1/2 = 2a_1.$$

Once each $L_n$ were shown to be nonnegative, it is obvious from \eqref{H1} that
$\,\Phi\ge 0\,$ on $(0, \infty)$. Furthermore, in view of the known fact (\cite[15.22]{Wa}) that
those positive zeros of Bessel functions $J_\nu, J_{\nu+1}$ are interlaced for any $\,\nu\in\R,\,$
we observe that $\Phi$ is strictly positive in the case when at least one of the $L_n$'s is
strictly positive. In other words, if $\,L_n\ge 0\,$ for all $\,n\ge 1,\,$ then $\Phi$ is strictly
positive unless \eqref{H1} reduces to
\begin{align}\label{H0}
\Phi(x) &=\Gamma^2\left(a_1+1/2\right)\left(\frac x2\right)^{-2a_1+1}J^2_{a_1-1/2}\left(x\right)\nonumber\\
&={}_1F_2\left[\begin{array}{c}
a_1\\a_1+1/2, \,2a_1\end{array}\biggr|\,-x^2\right],
\end{align}
which happens only when $\,(b_1, b_2, b_3)\in\mathcal{A}.$

By making use of positivity criteria established in Lemma \ref{lemmaA1}, we now proceed to prove
the nonnegativity of each $L_n$ for $\,(b_1, b_2, b_3)\in\mathbf{H}_{\mathcal{A}}\,$
and $\,(a_1, a_2)\in\Lambda$. In preparation for the proof, let us introduce
\begin{equation}\label{Q}
\mathbf{Q}_j = \big\{(b_1, b_2, b_3)\in \mathbf{H}_{\mathcal{A}}:
\xi_2\le b_j\le \xi_3\big\},\quad j=1, 2, 3.
\end{equation}
As illustrated in Figure \ref{Fig6}, if $\mathbf{H}_{\mathcal{A}}$ happens to be a hexagon, for instance,
then $\,\mathbf{Q}_1, \mathbf{Q}_2, \mathbf{Q}_3\,$ represent separately quadrilaterals
$$\,A_2 A_5 A_6 A_4, \, \,A_1 A_3 A_5 A_6, \,\,A_1 A_3 A_4 A_2.\,$$

In addition, we denote by $\mathbf{T}$ the subregion of $\mathbf{H}_{\mathcal{A}}$ bounded by
three line segments $\,A_2A_5, \, A_3 A_4, \, A_1 A_6,$ which represents the triangle with vertices
$$(\xi_1+\xi_3-\xi_2, \xi_2, \xi_2), \,\,(\xi_2, \xi_1+\xi_3-\xi_2, \xi_2),\,\,(\xi_2, \xi_2, \xi_1+\xi_3-\xi_2)$$
unless $\mathbf{H}_{\mathcal{A}}$ reduces to a point. As readily observed, the decomposition
\begin{equation}\label{D}
\mathbf{H}_{\mathcal{A}} = \mathbf{Q}_1\cup \mathbf{Q}_2\cup \mathbf{Q}_3\cup \mathbf{T}
\end{equation}
holds in any case. We note $\,\mathbf{T}\subset \mathbf{Q}_j\,$ for each $j$ only if $\,\xi_1 + \xi_3 -\xi_2\ge \xi_2.\,$

\bigskip

\begin{figure}[!ht]
 \centering
 \includegraphics[width=200pt, height= 180pt]{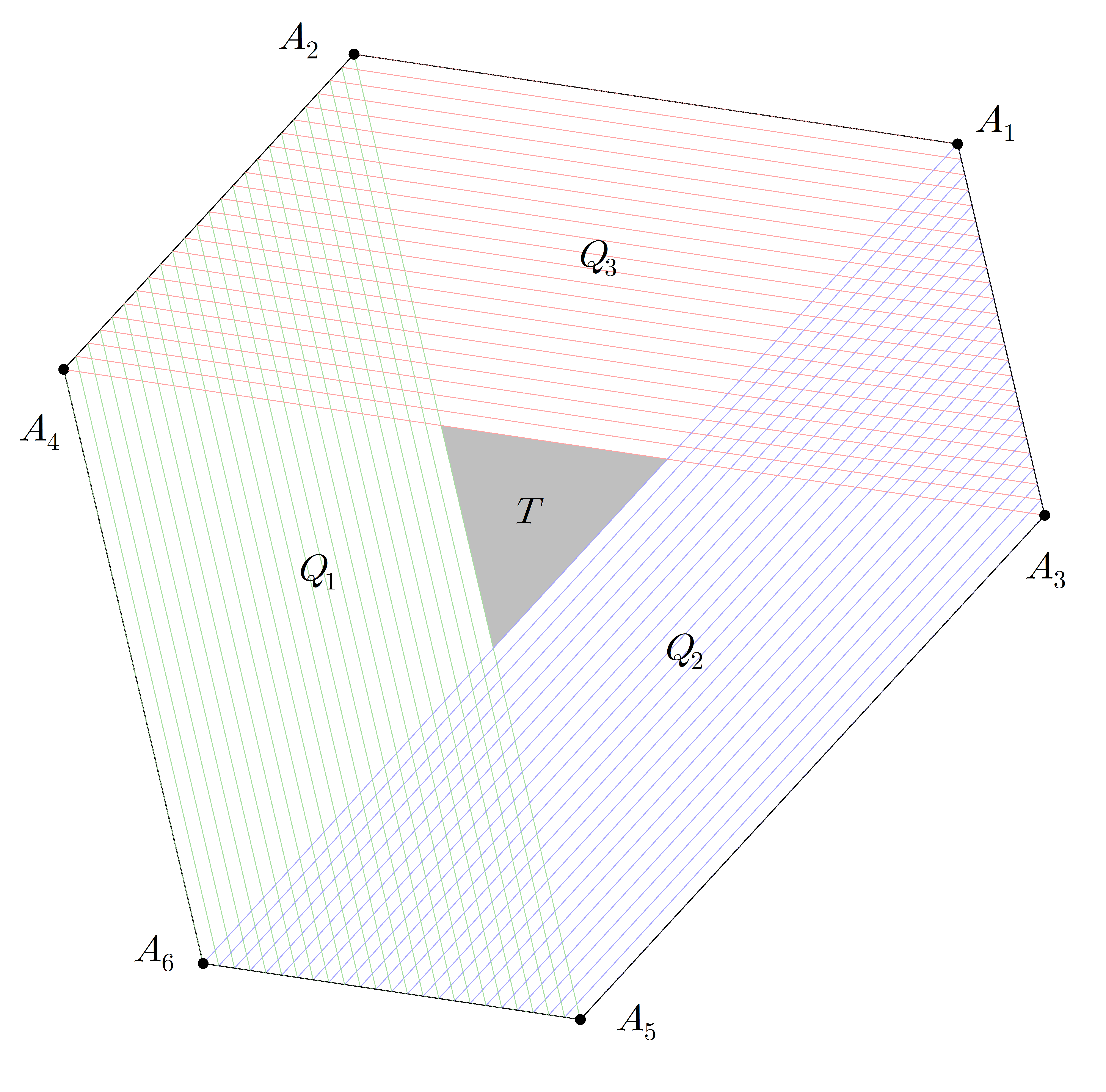}
 \caption{A decomposition of hexagon in the case $\,\xi_1 + \xi_3 > 2\xi_2.$}
\label{Fig6}
\end{figure}

Our proof will be divided into two large cases.

\paragraph{Case 1. $\,a_1 \ge 1/2.$}
We note $\,a_1 + 1/2\le 2a_1\,$ and we shall deal
with three possible occasions separately depending on the location of $a_2$.

\subparagraph{Subcase 1.1. $\,a_1+1/2\le a_2\le 2a_1.\,$}
Since $\,\xi_1+\xi_3-\xi_2 = 3a_1+1/2-a_2,$
$$\mathbf{Q}_j = \big\{(b_1, b_2, b_3)\in \mathbf{H}_{\mathcal{A}}:
a_2\le b_j\le 2a_1\big\},\quad j=1, 2, 3,$$
and $\mathbf{T}$ has vertices
$\,(c, a_2, a_2), \,(a_2, c, a_2),\,(a_2, a_2, c)\,$
with $\, c= 3a_1+1/2-a_2.$ By Lemma \ref{lemmaA1} applied with (A1), we find
$\,L_n\ge 0\,$ when
$$ a_2\le b_1\le 2a_1, \quad b_2\wedge b_3\ge a_1 + 1/2,$$
which implies $\,L_n\ge 0\,$ if $\,(b_1, b_2, b_3)\in \mathbf{Q}_1.\,$
By permuting the $b_j$'s, we find the same also holds true for
each point of $\,\mathbf{Q}_2, \mathbf{Q}_3.$

\begin{itemize}
\item[(i)] If $\,c\ge a_2,\,$ that is, $\,2a_2\le 3a_1 +1,\,$ then
$\,\mathbf{T}\subset \mathbf{Q}_j\,$ for each $j$ and we may
conclude $\,L_n\ge 0\,$ for each $\,(b_1, b_2, b_3)\in \mathbf{H}_{\mathcal{A}}.\,$

\item[(ii)] In the opposite occasion $\,2a_2> 3a_1 + 1,\,$
we apply Lemma \ref{lemmaA1} with (A1) and \eqref{A4} to deduce that $\,L_n\ge 0\,$ when
$$ a_1+1/2\le b_1\le 2a_1, \quad a_2-1\le b_2\wedge b_3\le a_2.$$
If we further restrict $\,c\ge a_2-1,\,$ that is, $\,2a_2\le 3a_1 + 3/2,\,$
then
$$\mathbf{T}\subset \big[a_1+1/2, 2a_1\big] \times \big[a_2-1, a_2\big]\times \big[a_2-1, a_2\big].$$
It follows that $L_n$ is nonnegative for each point of $\mathbf{T}$ and hence for any point of
the entire hexagon $\mathbf{H}_{\mathcal{A}}$ in this occasion.
\end{itemize}

In summary, we have proved $\,L_n\ge 0\,$ for each $\,(b_1, b_2, b_3)\in \mathbf{H}_{\mathcal{A}}\,$
when
\begin{equation}\label{H6}
a_1\ge 1/2,\quad a_1+ 1/2\le a_2\le \max\left[ 2a_1,\, \frac 32\left(a_1 + \frac 12\right)\right].
\end{equation}

\subparagraph{Subcase 1.2. $\,a_1+1/2\le 2a_1\le a_2.\,$}
We note $$\,\mathbf{H}_{\mathcal{A}}\subset
\big[a_1+1/2, a_2\big] \times \big[a_1+1/2, a_2\big] \times \big[a_1+1/2, a_2\big]\equiv I.\,$$
By Lemma \ref{lemmaA1} applied with (A2) and \eqref{A4}, we deduce $\,L_n\ge 0\,$ when
\begin{align}\label{H7}
& a_1 + 1/2\le b_1\le 2a_1 +1, \,\, a_2-1\le b_2\wedge b_3\le a_2,\nonumber\\
&\qquad 2a_1(a_1+1/2) a_2\le b_1 b_2 b_3.
\end{align}

If we further assume $\,a_2-1\le a_1 +1/2,\,$ that is, $\,a_2\le a_1 + 3/2,\,$ which holds only if
$\,1/2<a_1\le 3/2,\,$ then $\,a_2\le 2a_1 +1\,$ so that
$$I\subset \big[a_1+1/2, 2a_1+1\big] \times \big[a_2-1, a_2\big]\times \big[a_2-1, a_2\big]$$
and hence $\,L_n\ge 0\,$
provided the last condition of \eqref{H7} were verified. By applying Lemma \ref{lemmaH}
with $\,\alpha_1 = a_1 + 1/2, \,\alpha_2 = 2a_1,\,\alpha_3 = a_2\,$ and $\,\beta_j = b_j$,
it is plain to verify the last condition of \eqref{H7}.

In summary, we have proved $\,L_n\ge 0\,$ for each $\,(b_1, b_2, b_3)\in \mathbf{H}_{\mathcal{A}}\,$
when
\begin{equation}\label{H8}
1/2\le a_1\le 3/2,\quad 2a_1\le a_2\le a_1 + 3/2.
\end{equation}

\subparagraph{Subcase 1.3. $\,a_2\le a_1+1/2\le 2a_1.$}
Since $$\,\xi_1 = a_2, \,\xi_2 = a_1+1/2,\,\xi_3 = 2a_1,\quad \xi_1+\xi_3-\xi_2 = a_1+a_2-1/2\equiv c,$$
$$\mathbf{Q}_j = \big\{(b_1, b_2, b_3)\in \mathbf{H}_{\mathcal{A}}:
a_1+1/2\le b_j\le 2a_1\big\},\quad j=1, 2, 3,$$
and vertices of triangle $\mathbf{T}$ are given by
$$\,(c, a_1+1/2, a_1+1/2), \,(a_1+1/2, c, a_1+1/2),\,(a_1+1/2, a_1+1/2, c).$$
By applying Lemma \ref{lemmaA1} with (A1), we find $\,L_n\ge 0\,$ when
$$ a_1+1/2\le b_1\le 2a_1, \quad b_2\wedge b_3\ge a_2,$$
which implies $\,L_n\ge 0\,$ if $\,(b_1, b_2, b_3)\in \mathbf{Q}_1.\,$
By permuting the $b_j$'s, we find the same also holds true for
each point of $\,\mathbf{Q}_2, \mathbf{Q}_3.$

\begin{itemize}
\item[(i)] If $\,c\ge a_1+1/2,\,$ that is, $\,a_2\ge 1,\,$ then
$\,\mathbf{T}\subset \mathbf{Q}_j\,$ for all $\,j =1, 2, 3\,$ so that we may conclude
$\,L_n\ge 0\,$ for each $\,(b_1, b_2, b_3)\in \mathbf{H}_{\mathcal{A}}.\,$

\item[(ii)] In the case $\,c\le a_1+1/2\,$ or $\,a_2\le 1,\,$ we apply Lemma \ref{lemmaA1}
with (A1) and \eqref{A4} to find that $\,L_n\ge 0\,$ when
$$ a_2\le b_1\le 2a_1, \quad a_1-1/2\le b_2\wedge b_3\le a_1+1/2.$$
Since $\,a_1-1/2\le c,\,$ it is evident that
$$\mathbf{T}\subset \big[a_2, 2a_1\big] \times \big[a_1-1/2, a_1+1/2\big]\times \big[a_1-1/2, a_1+1/2\big],$$
which implies $\,L_n\ge 0\,$ for each point of $\mathbf{T}$ and hence for any point of
the entire hexagon $\mathbf{H}_{\mathcal{A}}$.
\end{itemize}

In summary, we have proved $\,L_n\ge 0\,$ for each
$\,(b_1, b_2, b_3)\in \mathbf{H}_{\mathcal{A}}\,$ when
\begin{equation}\label{H9}
a_1\ge 1/2,\quad 0<a_2\le a_1 + 1/2.
\end{equation}

\paragraph{Case 2. $\,0 < a_1 < 1/2.$}
We note $\,2a_1<a_1 + 1/2\,$ in this case and we shall deal
with three possible subcases separately depending on the location of $a_2$.
Since the method is basically same as above, our proof will be sketchy.

\subparagraph{Subcase 2.1. $\,2a_1\le a_2\le a_1+1/2.\,$}
In decomposition \eqref{D}, we note
$$\mathbf{Q}_j = \big\{(b_1, b_2, b_3)\in \mathbf{H}_{\mathcal{A}}:
a_2\le b_j\le a_1+1/2\big\},\quad j=1, 2, 3,$$
and $\mathbf{T}$ has
$\,(c, a_2, a_2), \,(a_2, c, a_2), \,(a_2, a_2, c),\,$
with $\,c\equiv 3a_1+1/2-a_2.\,$
By Lemma \ref{lemmaA1}, we find $\,L_n\ge 0\,$ when
\begin{align}\label{H10}
& a_2\le b_1\le 2a_1 +1,\,\,a_1-1/2\le b_2\wedge b_3\le a_1+1/2,\nonumber\\
&\qquad b_2\wedge b_3>0,\,\,2a_1(a_1+1/2) a_2\le b_1 b_2 b_3.
\end{align}
Since $\,2a_1+1>a_1+1/2,\,a_1-1/2<2a_1\,$ and the last condition can be verified by Lemma \ref{lemmaH},
we conclude $\,L_n\ge 0\,$ if $\,(b_1, b_2, b_3)\in \mathbf{Q}_1.\,$
By permuting the $b_j$'s, we find the same also holds for
each point of $\,\mathbf{Q}_2, \mathbf{Q}_3.$

In the case $\,c\ge a_2,\,$ that is, $\,2a_2\le 3a_1 +1,\,$ we note
$\,\mathbf{T}\subset \mathbf{Q}_j\,$ for each $j$ and hence
$\,L_n\ge 0\,$ for each $\,(b_1, b_2, b_3)\in \mathbf{H}_{\mathcal{A}}.\,$
(Unfortunately, positivity criteria of Lemma \ref{lemmaA1} are not suitable for dealing
with the case $\,c\le a_2.$)

In summary, we have proved $\,L_n\ge 0\,$ for each
$\,(b_1, b_2, b_3)\in \mathbf{H}_{\mathcal{A}}\,$ when
\begin{equation}\label{H11}
0<a_1<1/2,\quad 2a_1\le a_2\le\frac 12\left(3a_1 + \frac 12\right).
\end{equation}

\subparagraph{Subcase 2.2. $\,2a_1< a_1+1/2\le a_2.\,$}
By interchanging $\,a_1+1/2,\,a_2\,$ and proceeding as in the preceding subcase,
we find that if $\,a_2\le 2a_1 +1,\,$ then
$\,L_n\ge 0\,$ for each $\,(b_1, b_2, b_3)\in \mathbf{Q}_j,\,$
where $\mathbf{Q}_j$ denotes
$$\mathbf{Q}_j = \big\{(b_1, b_2, b_3)\in \mathbf{H}_{\mathcal{A}}:
a_1+1/2\le b_j\le a_2\big\},\quad j=1, 2, 3.$$

In regards to the triangle $\mathbf{T}$ having vertices
$$(c, a_1+1/2, a_1+1/2), \,\,(a_1+1/2, c, a_1+1/2), \,\,(a_1+1/2, a_1+1/2, c),$$
where $\,c \equiv a_1+a_2-1/2, $ if $\,c\ge a_1+1/2,$ then $\,\mathbf{T}\subset \mathbf{Q}_j\,$ for each $j$ and
$\,L_n\ge 0\,$ for $\,(b_1, b_2, b_3)\in \mathbf{H}_{\mathcal{A}}.\,$
We remark once again that positivity criteria of Lemma \ref{lemmaA1} are not suitable for dealing
with the case $\,c\le a_1+1/2.$

In summary, we have proved $\,L_n\ge 0\,$ for each
$\,(b_1, b_2, b_3)\in \mathbf{H}_{\mathcal{A}}\,$ when
\begin{equation}\label{H12}
0<a_1<1/2,\quad 1\le a_2\le 2a_1 +1.
\end{equation}

\subparagraph{Subcase 2.3. $\,a_2\le 2a_1< a_1+1/2.\,$}
As $$\,\mathbf{H}_{\mathcal{A}}\subset\big[a_2, a_1+1/2\big] \times \big[a_2, a_1+1/2\big] \times \big[a_2, a_1+1/2\big],$$
it is immediate to confirm that the positivity region obtained by \eqref{H10}
covers the whole of $\mathbf{H}_{\mathcal{A}}$. Thus $\,L_n\ge 0\,$ for each
$\,(b_1, b_2, b_3)\in \mathbf{H}_{\mathcal{A}}\,$ when
\begin{equation}\label{H13}
0<a_1<1/2,\quad 0<a_2\le 2a_1.
\end{equation}

\paragraph{The end of proof for Proposition \ref{theorem1}.}
On collecting all of the above case-by-case results summarized in
\eqref{H6}, \eqref{H8}, \eqref{H9}, \eqref{H11}, \eqref{H12}, \eqref{H13},
it is simple to find that each $L_n$ is nonnegative for
$\,(b_1, b_2, b_3)\in\mathbf{H}_{\mathcal{A}}\,$ provided
$\,(a_1, a_2)\in\Lambda.\,$ In view of the expansion \eqref{H2} and our analysis on the strict positivity,
the proof is now complete.\qed

\section{Newton polyhedra and positivity}
As standard, the Newton polyhedron of a set $\,E\subset \R_+^3\,$ refers to
the convex hull of the union of octants with corners at all points of $E$,
$$\bigcup_{\mathbf{x}\in E} \left(\mathbf{x} + \R_+^3\right),$$
which will be denoted by $\boldsymbol{\Gamma}_+(E)$ (see \cite{V} for instance).

According to the transference principle of Proposition \ref{proposition1},
if $\Phi$ were shown to be nonnegative for a single point $\,\mathbf{b} = (b_1, b_2, b_3),$
then $\Phi$ remains strictly positive for all points of octant $\,\mathbf{b} + \R_+^3\,$
except for the corner $\mathbf{b}$. As a consequence, we obtain the following
extension of Proposition \ref{theorem1} which constitutes
one of our main positivity results in this paper.

\begin{theorem}\label{theorem2}
For $\,(a_1, a_2)\in\Lambda,$ if $\,(b_1, b_2, b_3)\in\boldsymbol{\Gamma}_+\left(\mathcal{A}\right),$
then $\,\Phi(x)\ge 0\,$ for all $\,x>0\,$ and
strict inequality holds true unless $\,(b_1, b_2, b_3)\in\mathcal{A}.$
\end{theorem}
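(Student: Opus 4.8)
The plan is to deduce the statement on the full Newton polyhedron $\boldsymbol{\Gamma}_+(\mathcal{A})$ from the positivity already established on its bottom face $\mathbf{H}_{\mathcal{A}}$ in Proposition \ref{theorem1}, using the transference principle to propagate positivity from that face throughout the polyhedron. The geometric heart of the argument is the identity
\begin{equation*}
\boldsymbol{\Gamma}_+(\mathcal{A}) = \mathbf{H}_{\mathcal{A}} + \R_+^3
= \bigl\{\,\mathbf{p}+\boldsymbol{\epsilon} : \mathbf{p}\in\mathbf{H}_{\mathcal{A}},\ \boldsymbol{\epsilon}\in\R_+^3\,\bigr\},
\end{equation*}
which I would establish first. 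For the inclusion $\boldsymbol{\Gamma}_+(\mathcal{A})\subseteq\mathbf{H}_{\mathcal{A}}+\R_+^3$, note that $\mathbf{H}_{\mathcal{A}}+\R_+^3$ is a Minkowski sum of two convex sets and hence convex; since it contains every octant $\mathbf{a}+\R_+^3$ with $\mathbf{a}\in\mathcal{A}\subset\mathbf{H}_{\mathcal{A}}$, it contains their convex hull, which is $\boldsymbol{\Gamma}_+(\mathcal{A})$ by definition. For the reverse inclusion, writing $\mathbf{p}=\sum_i\lambda_i A_i$ as a convex combination of the vertices $A_i\in\mathcal{A}$, one has $\mathbf{p}+\boldsymbol{\epsilon}=\sum_i\lambda_i(A_i+\boldsymbol{\epsilon})$, a convex combination of points of $\boldsymbol{\Gamma}_+(\mathcal{A})$, which therefore lies in $\boldsymbol{\Gamma}_+(\mathcal{A})$.

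With the identity in hand, fix $(a_1,a_2)\in\Lambda$ and an arbitrary $\mathbf{b}=(b_1,b_2,b_3)\in\boldsymbol{\Gamma}_+(\mathcal{A})$, and recall from \eqref{H1} that every point of $\mathbf{H}_{\mathcal{A}}$ has coordinate sum $\xi_1+\xi_2+\xi_3=3a_1+a_2+1/2$. I would distinguish two cases according to the value of $s=b_1+b_2+b_3$. If $s=\xi_1+\xi_2+\xi_3$, then in any decomposition $\mathbf{b}=\mathbf{p}+\boldsymbol{\epsilon}$ the surplus $\boldsymbol{\epsilon}$ has vanishing coordinate sum and hence $\boldsymbol{\epsilon}=\mathbf{0}$, so $\mathbf{b}=\mathbf{p}\in\mathbf{H}_{\mathcal{A}}$ and the assertion---nonnegativity, with strict inequality unless $\mathbf{b}\in\mathcal{A}$---is precisely Proposition \ref{theorem1}. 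If $s>\xi_1+\xi_2+\xi_3$, choose any decomposition $\mathbf{b}=\mathbf{p}+\boldsymbol{\epsilon}$ with $\mathbf{p}\in\mathbf{H}_{\mathcal{A}}$; then $\boldsymbol{\epsilon}\in\R_+^3$ is not the zero vector, Proposition \ref{theorem1} gives $\Phi(\mathbf{a},\mathbf{p};x)\ge 0$ for all $x>0$, and part (i) of the transference principle (Proposition \ref{proposition1}) upgrades this to $\Phi(\mathbf{a},\mathbf{b};x)=\Phi(\mathbf{a},\mathbf{p}+\boldsymbol{\epsilon};x)>0$ for all $x>0$. Here I would also note in passing that the components of $\mathbf{p}$ are positive, since the vertices $A_i$ have positive coordinates $\xi_j$ and positivity is preserved under convex combination, so that Proposition \ref{proposition1} indeed applies.

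Combining the two cases yields $\Phi(x)\ge 0$ throughout $\boldsymbol{\Gamma}_+(\mathcal{A})$, with strict positivity except possibly on the bottom face, where Proposition \ref{theorem1} confines the non-strict points to $\mathcal{A}$; since $\mathcal{A}$ lies on that face, the equality set for the whole polyhedron is exactly $\mathcal{A}$, as claimed. The substantive work has already been carried out in Proposition \ref{theorem1} and in the transference principle, so no genuinely hard step remains; the only points demanding care are the verification of the Minkowski-sum identity and the bookkeeping of the equality case, both of which become routine once the coordinate-sum structure of the bottom face is used to force $\boldsymbol{\epsilon}=\mathbf{0}$ precisely on $\mathbf{H}_{\mathcal{A}}$.
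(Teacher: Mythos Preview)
Your proposal is correct and follows essentially the same route as the paper: the authors simply state that Theorem \ref{theorem2} is ``a consequence'' of Proposition \ref{theorem1} via the transference principle of Proposition \ref{proposition1}, without spelling out the geometric identity $\boldsymbol{\Gamma}_+(\mathcal{A})=\mathbf{H}_{\mathcal{A}}+\R_+^3$ or the case split on the coordinate sum. Your write-up supplies exactly those details the paper leaves implicit, and nothing more is needed.
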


\begin{figure}[!ht]
 \centering
 \includegraphics[width=240pt, height= 240pt]{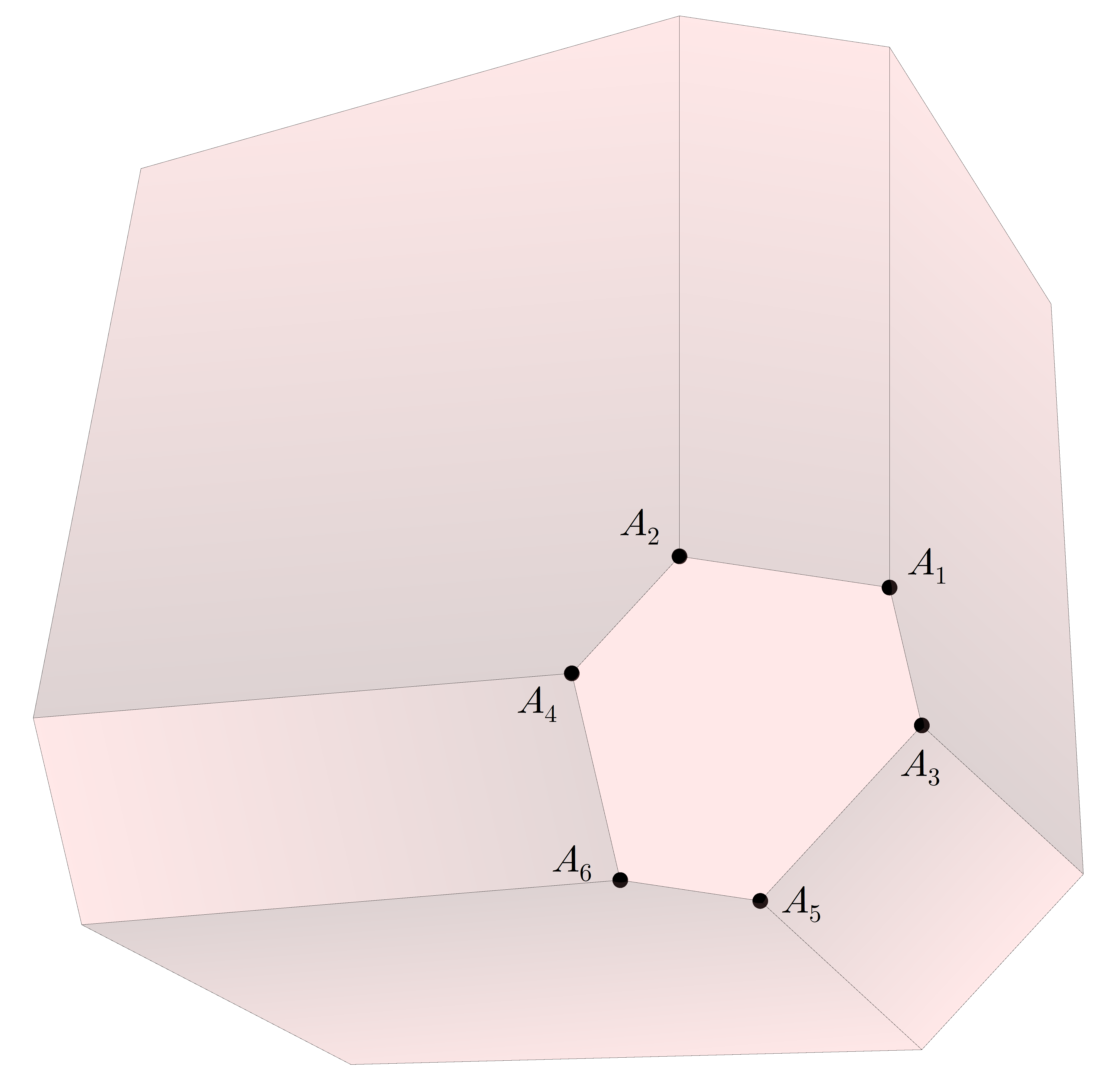}
 \caption{A Newton polyhedron with a hexagonal face.}
\label{Fig5}
\end{figure}

\begin{remark}
For practical applications, it would be desirable to characterize
$\boldsymbol{\Gamma}_+\left(\mathcal{A}\right)$ explicitly. On inspecting boundary planes
both analytically and geometrically, it is not difficult to find that
$\boldsymbol{\Gamma}_+\left(\mathcal{A}\right)$ consists of all points
$(b_1, b_2, b_3)$ satisfying the following conditions simultaneously:

\begin{equation}\label{NP1}
\left\{\begin{aligned}
&{\qquad \,b_1\wedge b_2\wedge b_3 \ge\xi_1,}\\
&{(b_1 + b_2)\wedge (b_2+b_3)\wedge (b_1+b_3)\ge \xi_1 + \xi_2,}\\
&{\qquad b_1 + b_2 + b_3\ge \xi_1 + \xi_2 +\xi_3.}\end{aligned}\right.
\end{equation}
\end{remark}

As readily observed, $\boldsymbol{\Gamma}_+\left(\mathcal{A}\right)$ has the following geometric features.

\begin{itemize}
\item If $\xi_1, \xi_2, \xi_3\,$ are distinct, as illustrated in Figure \ref{Fig5},
$\boldsymbol{\Gamma}_+\left(\mathcal{A}\right)$
represents an infinite polygonal region having 6 vertices at $\mathcal{A}$,
7 faces including the hexagon $\mathbf{H}_{\mathcal{A}}$,
and 12 edges comprised of 6 boundary lines of $\mathbf{H}_{\mathcal{A}}$ and
6 rays emanating from $\mathcal{A}$ parallel to the coordinate axes.

\item If $\,\xi_1 =\xi_2<\xi_3\,$ or $\,\xi_1<\xi_2 =\xi_3,$ 
then $\boldsymbol{\Gamma}_+\left(\mathcal{A}\right)$ represents
an infinite polygonal region having 3 vertices at $\mathcal{A}$, 4 faces including
the triangle $\mathbf{H}_{\mathcal{A}}$, and 6 edges comprised of 3 boundary lines of $\mathbf{H}_{\mathcal{A}}$ and
3 rays emanating from $\mathcal{A}$ parallel to the coordinate axes.

\item If $\,\xi_1=\xi_2 =\xi_3, $ which happens only when $\,a_1 =1/2, \,a_2 =1,\,$
then $\,\boldsymbol{\Gamma}_+\left(\mathcal{A}\right) = \mathbf{1} + \R_+^3,\,$ where $\,\mathbf{1} = (1, 1, 1).$
\end{itemize}


If $\,a_1 = a_2 =a,\,$ it is Theorem \ref{theorem2} that we wish to establish ultimately in this paper.
Since $\,(a, a)\in\Lambda\,$ for any $\,a>0,$ with no further restrictions, it may be noteworthy to single out
the result in this special occasion.

\begin{corollary}\label{corollary2}
For any $\,a>0,$ suppose
\begin{equation}\label{NP2}
\left\{\begin{aligned}
&{\quad b_1\wedge b_2\wedge b_3 \ge a,\,\,  b_1 + b_2 + b_3\ge 4a + 1/2,}\\
&{\,(b_1 + b_2)\wedge (b_2+b_3)\wedge (b_1+b_3)\ge 4a + 1/2 -\sigma,}
\end{aligned}\right.
\end{equation}
where $\,\sigma = \max\big(a+1/2,\,2a\big).$ If we define
\begin{equation*}
\phi(x) = {}_2F_3\left[\begin{array}{c}
a, a\\ b_1, b_2, b_3\end{array}\biggr| -x^2\right] \qquad(x>0),
\end{equation*}
then $\,\phi(x)\ge 0\,$ for all $\,x>0\,$ and strict inequality holds true unless
$\,(b_1, b_2, b_3)\,$ coincides with a permutation of
$\,(a,\,a+1/2,\,2a).\,$
\end{corollary}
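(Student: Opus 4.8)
The plan is to obtain Corollary \ref{corollary2} as the diagonal specialization $a_1 = a_2 = a$ of Theorem \ref{theorem2}, so that the entire argument reduces to two bookkeeping tasks: confirming the admissibility $(a,a)\in\Lambda$, and rewriting the defining inequalities \eqref{NP1} of $\boldsymbol{\Gamma}_+(\mathcal{A})$ into the explicit shape \eqref{NP2}. No new analysis is needed, since Theorem \ref{theorem2} already supplies both the nonnegativity and the strict-positivity assertions.

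First I would verify $(a,a)\in\Lambda$ for every $a>0$. When $a\le 1/2$, the first alternative in the definition of $\Lambda$ requires $a\le\frac12\left(3a+\frac12\right)$, which is equivalent to $-a/2\le 1/4$ and therefore always holds; when $a\ge 1/2$, the bound $a\le\max\left[a+\frac32,\,\frac32\left(a+\frac12\right)\right]$ is immediate because $a+\frac32>a$. Hence the hypotheses of Theorem \ref{theorem2} are met with no restriction on $a$. Next I would identify the ascending arrangement $(\xi_1,\xi_2,\xi_3)$ of the triple $a,\,a+1/2,\,2a$. Since $a<a+1/2$ and $a<2a$ hold for all $a>0$, one always has $\xi_1=a$; comparing $a+1/2$ with $2a$ then splits into $a\le 1/2$, where $(\xi_1,\xi_2,\xi_3)=(a,\,2a,\,a+1/2)$, and $a\ge 1/2$, where $(\xi_1,\xi_2,\xi_3)=(a,\,a+1/2,\,2a)$.

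In either case the sum is the permutation-invariant quantity $\xi_1+\xi_2+\xi_3=4a+1/2$, and the top entry is $\xi_3=\max\bigl(a+1/2,\,2a\bigr)=\sigma$. Consequently $\xi_1=a$, $\xi_1+\xi_2=(4a+1/2)-\sigma$, and $\xi_1+\xi_2+\xi_3=4a+1/2$, so the three lines of the characterization \eqref{NP1} become exactly the first-line pair $b_1\wedge b_2\wedge b_3\ge a$ together with $b_1+b_2+b_3\ge 4a+1/2$, and the second line $(b_1+b_2)\wedge(b_2+b_3)\wedge(b_1+b_3)\ge 4a+1/2-\sigma$ of \eqref{NP2}. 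Thus $(b_1,b_2,b_3)\in\boldsymbol{\Gamma}_+(\mathcal{A})$ precisely when \eqref{NP2} holds, and $\mathcal{A}$ is the set of permutations of $(a,\,a+1/2,\,2a)$. Applying Theorem \ref{theorem2} with $\phi=\Phi$ then yields $\phi(x)\ge 0$ on $(0,\infty)$, strict except when $(b_1,b_2,b_3)\in\mathcal{A}$, which is the claimed conclusion.

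There is essentially no analytic obstacle here; the corollary is a restatement of Theorem \ref{theorem2} in closed form. The only point requiring care is the case split $a\le 1/2$ versus $a\ge 1/2$ in fixing $\xi_2$ and $\xi_3$, but the unifying observations $\xi_1=a$, $\xi_3=\sigma$, and $\xi_1+\xi_2+\xi_3=4a+1/2$ collapse both cases into the single system \eqref{NP2}, so the translation of \eqref{NP1} is uniform.
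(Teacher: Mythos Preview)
Your proposal is correct and matches the paper's approach exactly: the paper presents Corollary \ref{corollary2} as the direct specialization $a_1=a_2=a$ of Theorem \ref{theorem2}, noting only that $(a,a)\in\Lambda$ for every $a>0$. Your additional explicit verification that \eqref{NP1} reduces to \eqref{NP2} via $\xi_1=a$, $\xi_3=\sigma$, and $\xi_1+\xi_2+\xi_3=4a+1/2$ is sound and simply fills in detail the paper leaves implicit.
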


In the case when $\,a_1, \,a_2\,$ are distinct, however, there is another Newton polyhedron
available for the positivity of $\Phi$ whose vertices are comprised of permutations of
$(a_1, a_2+1/2, 2a_2)$. To state more precisely, we shall rearrange these vertices
for the same purpose as before.

\begin{definition}
For $\,a_1>0, a_2>0,$ we denote by $\eta_1, \eta_2, \eta_3$ the arrangement of
$\,a_1, \,a_2+1/2,\,2a_2\,$ in ascending order of magnitude so that
$$\eta_1 = \min\big( a_1, \,a_2+1/2,\,2a_2\big),\quad \eta_3 = \max\big(a_1, \,a_2+1/2,\,2a_2\big)$$
and by $\,\mathcal{B}\subset\R^3\,$ the set of all permutations of $(\eta_1, \eta_2, \eta_3)$ labelled by
\begin{align*}
\mathcal{B} = \big\{& B_1(\eta_1, \eta_2, \eta_3),\,\,\,B_2(\eta_2, \eta_1, \eta_3),\,\,\,B_3(\eta_1, \eta_3, \eta_2),\\
&B_4(\eta_3, \eta_1, \eta_2),\,\,\,B_5(\eta_2, \eta_3, \eta_1),\,\,\,B_6(\eta_3, \eta_2, \eta_1)\big\}.
\end{align*}
\end{definition}

Let $\Lambda^*$ be the reflection of $\Lambda$ across the line $\,a_1=a_2,$ that is,
$$\,\Lambda^* = \left\{(a_1, a_2) : (a_2, a_1)\in\Lambda\right\}.\,$$
Applying Proposition \ref{theorem1} with roles of $\,a_1, \,a_2\,$ interchanged, we find
that \emph{ for any pair $\,(a_1, a_2)\in\Lambda^*,$ if $\,(b_1, b_2, b_3)\in\mathbf{H}_{\mathcal{B}},$
the closed polygon with vertices $\mathcal{B}$,
then $\,\Phi(x)\ge 0\,$ for all $\,x>0\,$
and strict inequality holds true unless $\,(b_1, b_2, b_3)\in\mathcal{B}.$}
Since $\,\{(a_1, a_2): 0<a_1<a_2\}\subset\Lambda^*,$ an extension of this positivity result
by the Newton polyhedron of $\mathcal{B}$ can be summarized as follows, which constitutes our
second main positivity result.

\begin{theorem}\label{theorem3}
For $\,0<a_1<a_2,$ if $\,(b_1, b_2, b_3)\in\boldsymbol{\Gamma}_+\left(\mathcal{B}\right),$
then $\,\Phi(x)\ge 0\,$ for all $\,x>0\,$ and
strict inequality holds true unless $\,(b_1, b_2, b_3)\in\mathcal{B}.$
\end{theorem}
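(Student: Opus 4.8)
The plan is to obtain Theorem \ref{theorem3} from the analogue of Proposition \ref{theorem1} for $\mathcal{B}$ together with the transference principle, in exact parallel to how Theorem \ref{theorem2} was deduced from Proposition \ref{theorem1}. The excerpt has already established the bridging statement: applying Proposition \ref{theorem1} with the roles of $a_1,a_2$ interchanged shows that for any $(a_1,a_2)\in\Lambda^*$ and any $(b_1,b_2,b_3)\in\mathbf{H}_{\mathcal{B}}$ we have $\Phi(x)\ge 0$ for all $x>0$, with strict inequality unless $(b_1,b_2,b_3)\in\mathcal{B}$. Since $\{(a_1,a_2):0<a_1<a_2\}\subset\Lambda^*$, this base positivity on the triangle or hexagon $\mathbf{H}_{\mathcal{B}}$ is available under the hypothesis $0<a_1<a_2$ of the theorem.

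First I would fix $(a_1,a_2)$ with $0<a_1<a_2$ and record that $\mathbf{H}_{\mathcal{B}}$ lies on the plane $b_1+b_2+b_3=\eta_1+\eta_2+\eta_3$, so that $\mathcal{B}\subset\mathbf{H}_{\mathcal{B}}\subset\boldsymbol{\Gamma}_+(\mathcal{B})$ and $\mathbf{H}_{\mathcal{B}}$ is precisely the bottom face of the Newton polyhedron. Next I would invoke the transference principle of Proposition \ref{proposition1}(i): given any point $\mathbf{b}_0\in\mathbf{H}_{\mathcal{B}}$ at which $\Phi$ is nonnegative, every point of the shifted octant $\mathbf{b}_0+\R_+^3$ other than the corner $\mathbf{b}_0$ yields strict positivity. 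Taking the union of these octants over all $\mathbf{b}_0\in\mathbf{H}_{\mathcal{B}}$ therefore gives strict positivity on the whole set, minus the base face. The remaining geometric point is to verify that this union, together with the face $\mathbf{H}_{\mathcal{B}}$ itself, exhausts $\boldsymbol{\Gamma}_+(\mathcal{B})$; equivalently, every $(b_1,b_2,b_3)\in\boldsymbol{\Gamma}_+(\mathcal{B})$ either lies on $\mathbf{H}_{\mathcal{B}}$ or can be written as $\mathbf{b}_0+\boldsymbol{\epsilon}$ with $\mathbf{b}_0\in\mathbf{H}_{\mathcal{B}}$ and $\boldsymbol{\epsilon}\in\R_+^3\setminus\{0\}$. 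This follows because $\mathbf{H}_{\mathcal{B}}$ is the unique bottom face of $\boldsymbol{\Gamma}_+(\mathcal{B})$ orthogonal to $(1,1,1)$, so dropping any interior point of the polyhedron straight down in a coordinate-monotone fashion lands it on or above that face.

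I would then treat strictness: for $(b_1,b_2,b_3)\in\boldsymbol{\Gamma}_+(\mathcal{B})\setminus\mathcal{B}$, either the point lies strictly above the plane of $\mathbf{H}_{\mathcal{B}}$ (then it is $\mathbf{b}_0+\boldsymbol{\epsilon}$ with $\boldsymbol{\epsilon}\ne 0$ and strict positivity comes directly from Proposition \ref{proposition1}(i)), or it lies on $\mathbf{H}_{\mathcal{B}}$ but outside $\mathcal{B}$ (then strict positivity is already supplied by the interchanged form of Proposition \ref{theorem1}, via the interlacing-of-Bessel-zeros argument used there, since the expansion reduces to a single squared Bessel term only at the vertices $\mathcal{B}$). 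Combining the two cases yields $\Phi\ge 0$ throughout $\boldsymbol{\Gamma}_+(\mathcal{B})$ with equality possible only on $\mathcal{B}$.

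The main obstacle is not analytic but geometric bookkeeping: one must confirm that $\mathbf{H}_{\mathcal{B}}$ is genuinely the full bottom face of $\boldsymbol{\Gamma}_+(\mathcal{B})$ and that no part of $\boldsymbol{\Gamma}_+(\mathcal{B})$ escapes the union $\bigcup_{\mathbf{b}_0\in\mathbf{H}_{\mathcal{B}}}(\mathbf{b}_0+\R_+^3)$. This is the same decomposition that justified Theorem \ref{theorem2}, and the cleanest way to close it is to note that an inequality description of $\boldsymbol{\Gamma}_+(\mathcal{B})$ entirely analogous to \eqref{NP1} (with $\xi_j$ replaced by $\eta_j$) holds, whence every such point dominates coordinatewise some point of $\mathbf{H}_{\mathcal{B}}$. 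Since $0<a_1<a_2$ guarantees $(a_1,a_2)\in\Lambda^*$ and hence the base case, no admissibility condition beyond $a_1<a_2$ is needed, and the argument terminates.
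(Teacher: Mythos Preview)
Your proposal is correct and follows essentially the same approach as the paper: interchange $a_1,a_2$ in Proposition~\ref{theorem1} to obtain nonnegativity on $\mathbf{H}_{\mathcal{B}}$ for $(a_1,a_2)\in\Lambda^*\supset\{0<a_1<a_2\}$, then extend to $\boldsymbol{\Gamma}_+(\mathcal{B})$ via the transference principle exactly as in Theorem~\ref{theorem2}. Your treatment is in fact more explicit than the paper's, which states the result without further argument; your identification $\boldsymbol{\Gamma}_+(\mathcal{B})=\bigcup_{\mathbf{b}_0\in\mathbf{H}_{\mathcal{B}}}(\mathbf{b}_0+\R_+^3)$ (valid since the Minkowski sum of the convex hull $\mathbf{H}_{\mathcal{B}}=\mathrm{conv}(\mathcal{B})$ with $\R_+^3$ is already convex) cleanly justifies the geometric step that the paper leaves implicit.
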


\begin{remark}
We should emphasize that no additional conditions on the pair
$(a_1, a_2)$ are required for validity. In analogue with \eqref{NP1},
$\boldsymbol{\Gamma}_+\left(\mathcal{B}\right)$ consists of all points $(b_1, b_2, b_3)$
satisfying the following conditions simultaneously:

\begin{equation}\label{NP3}
\left\{\begin{aligned}
&{\qquad \,b_1\wedge b_2\wedge b_3 \ge\eta_1,}\\
&{(b_1 + b_2)\wedge (b_2+b_3)\wedge (b_1+b_3)\ge \eta_1 + \eta_2,}\\
&{\qquad b_1 + b_2 + b_3\ge \eta_1 + \eta_2 +\eta_3,}\end{aligned}\right.
\end{equation}
where $\,\eta_1 = a_1,\,\,\eta_1+\eta_2+\eta_3 = 3a_2 + a_1 + 1/2\,$ in any case.
\end{remark}

As both Newton polyhedra $\,\boldsymbol{\Gamma}_+\left(\mathcal{A}\right),
\boldsymbol{\Gamma}_+\left(\mathcal{B}\right)\,$ are available as positivity regions of $\Phi$,
it is natural to ask whether the Newton polyhedron $\boldsymbol{\Gamma}_+\left(\mathcal{A}\cup\mathcal{B}\right)$
also gives rise to a region of positivity. What matters are side polygonal
regions between $\,\mathbf{H}_{\mathcal{A}}, \,\mathbf{H}_{\mathcal{B}}\,$ which
we shall now investigate. Note
$$\boldsymbol{\Gamma}_+\left(\mathcal{A}\right)\cup \boldsymbol{\Gamma}_+\left(\mathcal{B}\right)
\subset \boldsymbol{\Gamma}_+\left(\mathcal{A}\cup\mathcal{B}\right).$$

\section{Side quadrilaterals}
In order to facilitate our geometric interpretations,
we shall consider both $\,\mathbf{H}_{\mathcal{A}}, \mathbf{H}_{\mathcal{B}}\,$ as if they were
hexagons and hence the interconnecting polygonal region
would consist of six side quadrilaterals. Since any of $\,\mathbf{H}_{\mathcal{A}}, \mathbf{H}_{\mathcal{B}}\,$
in other occasions may be regarded as continuous deformations of hexagons in an obvious way,
such a presumption would be harmless.

Due to our arrangements, it is simple to observe that each of six side quadrilaterals
connecting the two hexagons has vertices $\,A_i, A_j, B_j, B_i\,$ for some $\, i, j\in \{1,2, 3\}\,$
with $\,i\ne j$, if enumerated in a fixed direction, which will be denoted by
$\mathbf{S}\big[A_iA_jB_jB_i\big]$. It is convenient to interpret
\begin{equation}\label{L}
\mathbf{S}\big[A_iA_jB_jB_i\big] = \bigcup_{\lambda\in[0, 1]} L_\lambda,
\end{equation}
the collection of all line segments $L_\lambda$ joining two points
$$(1-\lambda) A_i + \lambda B_i, \quad (1-\lambda) A_j + \lambda B_j,\quad 0\le\lambda\le 1.$$

By parameterizing in an elementary way, it is immediate to characterize the line segment
$L_\lambda$ in two exemplary cases:
\begin{itemize}
\item[(i)] For $\,\mathbf{S}\big[A_1A_2B_2B_1\big]$, each
$L_\lambda$ consists of all $(b_1, b_2, b_3)$ satisfying
\begin{equation}\label{S1}
\left\{\begin{aligned}
&{ \qquad\quad b_3 = (1-\lambda)\xi_3 + \lambda \eta_3,}\\
&{\quad b_1 + b_2 = (1-\lambda)(\xi_1+\xi_2) + \lambda (\eta_1 +\eta_2),}\\
&{(1-\lambda)\xi_1 + \lambda \eta_1\le b_1\wedge b_2\le (1-\lambda)\xi_2 + \lambda \eta_2.}
\end{aligned}\right.
\end{equation}
\item[(ii)] For $\,\mathbf{S}\big[A_5A_6B_6B_5\big]$,
each $L_\lambda$ consists of all $(b_1, b_2, b_3)$ satisfying
\begin{equation}\label{S2}
\left\{\begin{aligned}
&{ \qquad\quad b_3 = (1-\lambda)\xi_1 + \lambda \eta_1,}\\
&{\quad b_1 + b_2 = (1-\lambda)(\xi_2+\xi_3) + \lambda (\eta_2 +\eta_3),}\\
&{(1-\lambda)\xi_2 + \lambda \eta_2\le b_1\wedge b_2\le (1-\lambda)\xi_3 + \lambda \eta_3.}
\end{aligned}\right.
\end{equation}
\end{itemize}

As readily verified, by characterizing each $L_\lambda$ in the same way or by geometric
considerations, it turns out that
the other four side quadrilaterals arise as mirror images of the above
two quadrilaterals across either the plane $\,b_1 = b_3\,$ or
$\,b_2 = b_3\,$ in the senses specified as follows.

\begin{lemma}\label{lemmaS}
For $\,0<a_1<a_2,$ the following hold true.
\begin{align*}
\mathbf{S}\big[A_4A_6B_6B_4\big] &= \Big\{(b_1, b_2, b_3) :
(b_3, b_2, b_1) \in \mathbf{S}\big[A_1A_2B_2B_1\big]\Big\},\\
\mathbf{S}\big[A_3A_5B_5B_3\big] &= \Big\{(b_1, b_2, b_3) :
(b_1, b_3, b_2) \in \mathbf{S}\big[A_1A_2B_2B_1\big]\Big\},\\
\mathbf{S}\big[A_1A_3B_3B_1\big] &= \Big\{(b_1, b_2, b_3) :
(b_3, b_2, b_1) \in \mathbf{S}\big[A_5A_6B_6B_5\big]\Big\},\\
\mathbf{S}\big[A_2A_4B_4B_2\big] &= \Big\{(b_1, b_2, b_3) :
(b_1, b_3, b_2) \in \mathbf{S}\big[A_5A_6B_6B_5\big]\Big\}.
\end{align*}
\end{lemma}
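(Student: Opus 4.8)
The plan is to realize all six side quadrilaterals as images of the two seed quadrilaterals $\mathbf{S}[A_1A_2B_2B_1]$ and $\mathbf{S}[A_5A_6B_6B_5]$ under the two coordinate reflections
$$\rho_{13}:(b_1,b_2,b_3)\mapsto(b_3,b_2,b_1),\qquad \rho_{23}:(b_1,b_2,b_3)\mapsto(b_1,b_3,b_2),$$
each of which is a linear involution of $\R^3$. Since $\rho_{13},\rho_{23}$ are involutions, the right-hand side of each asserted identity is precisely the image of a seed quadrilateral under the appropriate reflection; so it would suffice to show that these reflections carry the two seeds onto the four stated targets.

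First I would record how each reflection permutes the labelled vertices. The key structural fact is that the enumerations of $\mathcal{A}$ and $\mathcal{B}$ follow one and the same permutation scheme --- $A_i$ and $B_i$ apply the identical rearrangement to $(\xi_1,\xi_2,\xi_3)$ and $(\eta_1,\eta_2,\eta_3)$ respectively --- so that each reflection sends $A_i\mapsto A_{\pi(i)}$ and $B_i\mapsto B_{\pi(i)}$ for one and the same index permutation $\pi$. A direct substitution into the coordinate labels then gives, for $\rho_{13}$,
$$A_1\leftrightarrow A_6,\qquad A_2\leftrightarrow A_4,\qquad A_3\leftrightarrow A_5,$$
and, for $\rho_{23}$,
$$A_1\leftrightarrow A_3,\qquad A_2\leftrightarrow A_5,\qquad A_4\leftrightarrow A_6,$$
with the identical action on the $B_i$. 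Reading off these correspondences, $\rho_{13}$ sends $\mathbf{S}[A_1A_2B_2B_1]$ to $\mathbf{S}[A_4A_6B_6B_4]$ and $\mathbf{S}[A_5A_6B_6B_5]$ to $\mathbf{S}[A_1A_3B_3B_1]$, while $\rho_{23}$ sends $\mathbf{S}[A_1A_2B_2B_1]$ to $\mathbf{S}[A_3A_5B_5B_3]$ and $\mathbf{S}[A_5A_6B_6B_5]$ to $\mathbf{S}[A_2A_4B_4B_2]$, which is exactly the pairing appearing in the four asserted identities.

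The remaining step is to promote this vertex correspondence to an identity of the full planar regions, and here the representation \eqref{L} does the work. Since each side quadrilateral is the union over $\lambda\in[0,1]$ of the segment $L_\lambda$ joining $(1-\lambda)A_i+\lambda B_i$ to $(1-\lambda)A_j+\lambda B_j$, and since a linear map commutes with convex combinations, applying $\rho_{13}$ or $\rho_{23}$ carries each $L_\lambda$ of a seed quadrilateral onto the corresponding segment $L_\lambda$ of its target. Taking the union over $\lambda$ then yields the four set identities as stated.

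The main obstacle is bookkeeping rather than analysis: one must verify with care that the index permutation induced by each reflection matches the vertex labels occurring in the statement, which is precisely the payoff of the fixed-location labelling introduced for $\mathcal{A}$ and $\mathcal{B}$. No use of the explicit inequalities \eqref{S1}, \eqref{S2} is needed, although one could alternatively apply the reflections directly to those constraints to reconfirm each identity.
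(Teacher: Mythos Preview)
Your proposal is correct and follows essentially the same route the paper sketches: the paper states the lemma as an immediate consequence of the mirror symmetry across the planes $b_1=b_3$ and $b_2=b_3$, verifiable either by characterizing each $L_\lambda$ directly or by geometric considerations, and you carry out the latter explicitly via the linear involutions $\rho_{13},\rho_{23}$ acting on the labelled vertices together with the segment representation~\eqref{L}. Your bookkeeping of the induced index permutations is accurate, and the passage from vertex correspondence to full set equality via linearity of the reflections is the natural way to complete the argument.
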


We are now in a position to prove the positivity of $\Phi$ when $(b_1, b_2, b_3)$
belongs to those side quadrilaterals. In view of the symmetry specified as above,
it suffices to deal with any two side quadrilaterals of type
$$\mathbf{S}\big[A_1A_2B_2B_1\big],\,\,\mathbf{S}\big[A_5A_6B_6B_5\big].$$

\subsection{Quadrilaterals of type $\mathbf{S}\big[A_5A_6B_6B_5\big].$}
As to three side quadrilaterals of this type, it turns out that {\it admissible} conditions for
$(a_1, a_2)$ are independent of $\Lambda$ and given as follows.

\begin{definition}
We denote by $\Sigma$ the set of $\,(a_1, a_2)\in(\R_+^*)^2\,$ satisfying
\begin{equation*}
\left\{\begin{aligned}
{a_1<a_2\le \min\big(2a_1,\, 1/2\big)} &{\quad\text{if}\quad a_1<1/2,}\\
{a_1<a_2\le 2a_1\quad\qquad} &{\quad\text{if}\quad a_1\ge 1/2.}
\end{aligned}\right.
\end{equation*}
\end{definition}

\begin{proposition}\label{lemmaQ1}
For $\,(a_1, a_2)\in\Sigma,$
if $\,(b_1, b_2, b_3)\in\mathbf{S}\big[A_5A_6B_6B_5\big],$
then $\,\Phi(x)\ge 0\,$ for all $\,x>0\,$ and strict inequality holds true unless it coincides
with one of vertices. By symmetry, the same holds true for
$$\mathbf{S}\big[A_1A_3B_3B_1\big],\,\,\mathbf{S}\big[A_2A_4B_4B_2\big].$$
\end{proposition}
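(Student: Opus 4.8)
The plan is to use the sums-of-squares expansion \eqref{H2}, so that positivity of $\Phi$ reduces to establishing $L_n \ge 0$ for all $n \ge 1$, where $L_n$ is the terminating ${}_4F_3$ series \eqref{H3}. The crucial point is that every point $(b_1, b_2, b_3)$ on the quadrilateral $\mathbf{S}\big[A_5A_6B_6B_5\big]$ still lies on a plane $b_1 + b_2 + b_3 = \text{const}$, since the endpoints $A_5, A_6$ lie on the plane $b_1+b_2+b_3 = 3a_1 + a_2 + 1/2$ and $B_5, B_6$ lie on $b_1+b_2+b_3 = a_1 + 3a_2 + 1/2$, and along each segment $L_\lambda$ the sum is the convex combination of these two constants. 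However, unlike the situation in Proposition \ref{theorem1}, the value of this sum now depends on $\lambda$, so the free parameter $\nu$ in Gasper's formula \eqref{G} must be chosen as a function of $\lambda$ to preserve the Saalsch\"utzian condition on $L_n$. First I would carry out this bookkeeping: fix $\lambda \in [0,1]$, compute $b_3 = (1-\lambda)\xi_1 + \lambda\eta_1$ and the sum $b_1 + b_2$ from \eqref{S2}, and select $\nu$ so that $2\nu + 1 = b_1+b_2+b_3 - a_1 - a_2 - 1/2$, which renders each $L_n$ Saalsch\"utzian.

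Once the Saalsch\"utzian condition holds, the main work is to verify the hypotheses of the ${}_4F_3$ positivity criterion, Lemma \ref{lemmaA1}, with the parameters read off from \eqref{H3}. I would match the numerator parameters $\alpha_2 = a_1+1/2$, $\alpha_3 = a_2$ (or the reverse, to keep $\alpha_3$ in a convenient range) and $n + \alpha_1 = n + 2\nu$ against the denominator parameters $\beta_1, \beta_2, \beta_3$ drawn from $(b_1, b_2, b_3)$. The structure of \eqref{S2} is favorable here: the constrained coordinate $b_3$ is pinned to a single value depending on $\lambda$, while $b_1 \wedge b_2$ is confined to the interval $\big[(1-\lambda)\xi_2 + \lambda\eta_2,\ (1-\lambda)\xi_3+\lambda\eta_3\big]$. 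I would then check whether conditions (A1) of Lemma \ref{lemmaA1}, namely $\alpha_2 \le \beta_3 \le 1+\alpha_1$ together with $\alpha_3 \le \beta_1 \wedge \beta_2$, are satisfied for all $\lambda \in [0,1]$ under the admissibility hypothesis $(a_1, a_2) \in \Sigma$; where the margins are too tight, I would fall back on the relaxed underlined condition \eqref{A4} (or its inductive extension \eqref{A42}) to cover the boundary values of $b_1 \wedge b_2$.

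The hard part will be confirming that the inequalities required by Lemma \ref{lemmaA1} hold \emph{uniformly} in $\lambda \in [0,1]$, since as $\lambda$ sweeps from $0$ to $1$ the vertices migrate from $\mathcal{A}$ to $\mathcal{B}$ and the relevant $\xi_i$ and $\eta_i$ change roles; the constraints $a_2 \le 2a_1$ and (for $a_1 < 1/2$) $a_2 \le 1/2$ defining $\Sigma$ are presumably exactly what keep the comparison $\alpha_2 \le \beta_3 \le 1 + \alpha_1$ valid across the whole segment. I expect this to require splitting $[0,1]$ according to the orderings of $a_1, a_2, a_1+1/2, 2a_1, 2a_2$, and in each subinterval verifying a pair of linear inequalities in $\lambda$. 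Having established $L_n \ge 0$ for every $\lambda$ and every $n \ge 1$, the expansion \eqref{H2} yields $\Phi(x) \ge 0$; strict positivity away from the vertices follows from the interlacing of Bessel zeros exactly as in the discussion surrounding \eqref{H0}, since $\Phi$ collapses to a single squared Bessel term only when $(b_1,b_2,b_3)$ is a permutation giving a vertex. Finally, the claims for $\mathbf{S}\big[A_1A_3B_3B_1\big]$ and $\mathbf{S}\big[A_2A_4B_4B_2\big]$ follow immediately from Lemma \ref{lemmaS}, since $\Phi$ is invariant under permutations of $(b_1, b_2, b_3)$.
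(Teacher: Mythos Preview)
Your plan has a genuine structural gap: the expansion \eqref{H2}/\eqref{H3} is not the general Gasper formula~\eqref{G} but the \emph{specific instance} obtained by taking $\nu = a_1 - 1/2$. With that particular choice, the numerator parameter $a_1$ in the ${}_{p+3}F_{q+1}$ coefficient of~\eqref{G} cancels against the denominator parameter $\nu+1/2 = a_1$, and this is what collapses the coefficient to the terminating ${}_4F_3$ series $L_n$ of~\eqref{H3}. Once you allow $\nu$ to depend on $\lambda$ via the Saalsch\"utzian relation $2\nu+1 = b_1+b_2+b_3 - a_1 - a_2 - 1/2$ (which on the quadrilateral equals $2[(1-\lambda)a_1 + \lambda a_2]$ and is therefore \emph{not} $2a_1$ unless $\lambda=0$), that cancellation disappears and the Gasper coefficient is a genuine ${}_5F_4$:
\[
K_n = {}_5F_4\left[\begin{array}{c} -n,\, n+2\nu,\, \nu+1,\, a_1,\, a_2\\ \nu+1/2,\, b_1,\, b_2,\, b_3\end{array}\right].
\]
Consequently Lemma~\ref{lemmaA1} is the wrong tool; you must invoke the ${}_5F_4$ positivity criteria of Lemma~\ref{lemmaA3} instead. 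This is exactly why the paper develops Lemma~\ref{lemmaA3} in the first place, and why the introduction flags the side quadrilaterals as requiring the terminating ${}_5F_4$ series~\eqref{TF4}.

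The paper's proof follows your overall architecture---fix $\lambda$, impose Saalsch\"utz to determine $\nu$, then verify a positivity criterion uniformly in $\lambda$---but with $K_n$ and Lemma~\ref{lemmaA3} in place of $L_n$ and Lemma~\ref{lemmaA1}. The case analysis splits according to the ordering of $a_2$, $a_1+1/2$, $2a_1$ (three subcases), and in each subcase one checks either condition~(C1) or~(C2), sometimes with the alternative hypothesis~\eqref{A4}. The admissibility set $\Sigma$ is precisely what makes those ${}_5F_4$ conditions go through; your suggestion that $\Sigma$ is tailored to the ${}_4F_3$ inequalities of Lemma~\ref{lemmaA1} would not explain, for instance, why Case~2 of the paper's proof needs the product inequality in~(C2) and an AM--GM step. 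Your final paragraph (interlacing for strict positivity, and the reduction to $\mathbf{S}[A_5A_6B_6B_5]$ via Lemma~\ref{lemmaS}) is correct.
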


We apply Gasper's sums of squares formula \eqref{G} to expand
\begin{align*}
\Phi(x) = \Gamma^2(\nu+1)\left(\frac x4\right)^{-2\nu}\sum_{n=0}^\infty  K_n\,\frac{2n+2\nu}{n+2\nu}
\frac{(2\nu+1)_n}{n!}J^2_{n+\nu}\left(\frac x2\right),
\end{align*}
where $\nu$ is an arbitrary real number subject to the condition that $2\nu$ does not coincide with a negative integer,
\begin{equation}
K_n = {}_5F_4\left[\begin{array}{c} -n, n+2\nu, \nu+1, a_1, a_2\\
        \nu+1/2, b_1, b_2, b_3\end{array}\right]
\end{equation}
for each $\,n\ge 1\,$ and $\,K_0\equiv 1.$ By the same reason as mentioned in the proof of Theorem \ref{theorem1},
it will be sufficient to prove $\,K_n\ge 0\,$ for all $\,n\ge 1.$

In accordance with \eqref{L}, we shall write
\begin{equation*}
\mathbf{S}\big[A_5A_6B_6B_5\big] = \bigcup_{\lambda\in[0, 1]} L_\lambda
\end{equation*}
and deduce the nonnegativity of $K_n$ on $L_\lambda$ by applying
positivity criteria for the terminating ${}_5F_4$ hypergeometric series established in
Lemma \ref{lemmaA3}.

For each $\,(b_1, b_2, b_3)\in L_\lambda,$ it follows from the characterization \eqref{S2} that
the Saalscht\"utzian condition for $K_n$ amounts to the choice
\begin{align}\label{nu}
2\nu+1 &= b_1 + b_2 + b_3 -a_1 -a_2-1/2\nonumber\\
&=2\big[(1-\lambda) a_1 + \lambda a_2\big].
\end{align}
Since $\,2\nu+1>0\,$ for any $\,\lambda\in [0, 1],$ this choice would be legitimate.

We shall divide our proof into three cases.

\paragraph{Case 1. $1/2\le a_1<a_2\le a_1+1/2 \le 2a_1.$}
Due to the correspondence
$$(\xi_1, \xi_2, \xi_3)=(a_2,\,a_1+1/2,\,2a_1),\,\,(\eta_1, \eta_2, \eta_3)=(a_1,\,a_2+1/2,\,2a_2),$$
the Saalscht\"utzian condition \eqref{nu} allows us to express
\begin{align*}
\nu+1/2 &= (1-\lambda)\xi_1 + \lambda \eta_1,\\
\nu+1 &= (1-\lambda)\xi_2 + \lambda \eta_2,\\
2\nu+1 &= (1-\lambda)\xi_3 + \lambda\eta_3.
\end{align*}

Concerning the positivity criterion (C1) of Lemma \ref{lemmaA3}, if we match
$$(\alpha_1, \alpha_2, \alpha_3, \alpha_4) \mapsto (2\nu, \xi_1, \nu+1, \eta_1),
\,\,(\beta_1, \beta_2, \beta_3, \beta_4) \mapsto (b_1, b_2, b_3, \nu+1/2), $$
then it follows readily from the above expression and \eqref{S2}
$$
\alpha_3 \le\beta_1\wedge\beta_2, \,\,
\alpha_4 \le\beta_3\wedge\beta_4, \,\,
\alpha_2 +\alpha_3 \le\beta_1 +\beta_2 \le 1+\alpha_1 +\alpha_3,
$$
whence we may conclude $\,K_n \ge 0\,$ for all $n \ge 1$.

\paragraph{Case 2. $1/2\le a_1<a_1+1/2\le a_2 \le 2a_1.$} Due to the correspondence
$$(\xi_1, \xi_2, \xi_3) = (a_1+1/2,\,a_2,\,2a_1),\,\,(\eta_1, \eta_2, \eta_3) = (a_1,\,a_2+1/2,\,2a_2),$$
the Saalscht\"utzian condition \eqref{nu} allows us to express
\begin{align*}
\nu+1/2 &= (1-\lambda)\eta_1 + \lambda\xi_2,\\
\nu+1 &= (1-\lambda)\xi_1 + \lambda\eta_2,\\
2\nu+1 &= (1-\lambda)\xi_3 + \lambda\eta_3.
\end{align*}

We shall deduce the nonnegativity of $K_n$ based on the
criterion (C2) of Lemma \ref{lemmaA3}. On matching parameters by
$$(\alpha_1, \alpha_2, \alpha_3, \alpha_4) \mapsto (2\nu, \nu+1, \eta_1,\xi_2),
\,\,(\beta_1, \beta_2, \beta_3, \beta_4) \mapsto (\nu+1/2, b_3, b_1, b_2),$$
it is straightforward to verify the first three conditions of (C2)
\begin{align*}
&\alpha_3\le \beta_1\wedge\beta_2, \,\,\,\alpha_4\wedge\beta_3\wedge\beta_4\le 1+\alpha_1,\\
& 1+\alpha_1 +\alpha_2\le \beta_3 +\beta_4 \le 2+\alpha_1 + \alpha_4.
\end{align*}
For the last condition of (C2), we use the fact $\,\xi_1-\eta_1 = \eta_2-\xi_2 =1/2 \,$
and apply the AM-GM inequality to deduce
\begin{align*}
\beta_1\beta_2 &(\beta_3+\beta_4-\alpha_1-1)\\
&= \big[(1-\lambda)\eta_1 +\lambda\xi_2\big]\big[(1-\lambda)\xi_1 +\lambda\eta_1\big]
\big[(1-\lambda)\xi_2 + \lambda\eta_2\big]\\
&\ge \big[(1-\lambda)\eta_1 +\lambda\xi_2\big]
\left\{\eta_1\xi_2 + \big[(1-\lambda)\xi_2 + \lambda\eta_1\big]/2\right\}\\
&\ge \left\{\big[(1-\lambda)\eta_1 +\lambda\xi_2\big]+ 1/2\right\} \eta_1\xi_2
= \alpha_2\alpha_3\alpha_4
 \end{align*}
and hence we may conclude $\,K_n \ge 0\,$ for all $n \ge 1$.

\paragraph{Case 3. $a_1<a_2\le 1/2, \,\,a_2\le 2a_1 \le a_1+1/2.$} We note the correspondence
$$(\xi_1, \xi_2,\xi_3)=(a_2,\,2a_1,\,a_1+1/2),\,\,(\eta_1,\eta_2,\eta_3)=(a_1,\,2a_2,\,a_2+1/2)$$
and the Saalscht\"utzian condition \eqref{nu} can be written as
\begin{align*}
\nu+1/2 &= (1-\lambda)\eta_1 + \lambda\xi_1,\\
\nu+1 &= (1-\lambda)\xi_3 + \lambda\eta_3,\\
2\nu+1 &= (1-\lambda)\xi_2 + \lambda\eta_2.
\end{align*}

In this case, we apply the positivity criterion (C1) of Lemma \ref{lemmaA3} with the alternative condition
\eqref{A4} and parameter-matchings
$$(\alpha_1, \alpha_2, \alpha_3, \alpha_4) \mapsto (2\nu, \xi_1, \nu+1, \eta_1),\,\,
(\beta_1, \beta_2, \beta_3, \beta_4) \mapsto (b_1, b_2, b_3, \nu+1/2).$$
While the other conditions can be verified easily, we use \eqref{S2} to observe
\begin{align*}
\nu\le b_1\wedge b_2 \le (1-\lambda)(a_1 + 1/2) + \lambda(a_2 + 1/2) = \nu+1,
\end{align*}
which verifies \eqref{A4} and hence $\,K_n \ge 0\,$ for all $n \ge 1$.

\medskip
Collecting all of the above case-by-case results, it is immediate to find why our arguments works for
$\Sigma$ and our proof is complete.\qed

\begin{remark}
Each of quadrilaterals
$$\mathbf{S}\big[A_5A_6B_6B_5\big],\,\,\mathbf{S}\big[A_1A_3B_3B_1\big],\,\,\mathbf{S}\big[A_2A_4B_4B_2\big]$$
lies separately on the plane
\begin{align}\label{SP1}
\left\{\begin{aligned}
&{b_1 + b_2 + b_3 = 3a_1 + \frac 12 + a_2 + \frac{\,2(a_2 - a_1)\,}{a_1 -\xi_1} (b_3 - \xi_1),}\\
&{b_1 + b_2 + b_3 = 3a_1 + \frac 12 + a_2 + \frac{\,2(a_2 - a_1)\,}{a_1 -\xi_1} (b_1 - \xi_1),}\\
&{b_1 + b_2 + b_3 = 3a_1 + \frac 12 + a_2 + \frac{\,2(a_2 - a_1)\,}{a_1 -\xi_1} (b_2 - \xi_1).}\end{aligned}\right.
\end{align}
\end{remark}

\subsection{Quadrilaterals of type $\mathbf{S}\big[A_1A_2B_2B_1\big].$}
A geometric inspection reveals that three side quadrilaterals of this type are
part of Newton polyhedra of their neighboring side quadrilaterals and hence the
positivity of $\Phi$ follows from the preceding results.

\begin{proposition}\label{lemmaQ2}
For $\,(a_1, a_2)\in\Lambda\cap\Sigma,$
if $\,(b_1, b_2, b_3)\in\mathbf{S}\big[A_1A_2B_2B_1\big],$
then $\,\Phi(x)\ge 0\,$ for all $\,x>0\,$ and strict inequality holds true unless it coincides
with one of vertices. By symmetry, the same also holds true for
$$\mathbf{S}\big[A_4A_6B_6B_4\big],\,\,\mathbf{S}\big[A_3A_5B_5B_3\big].$$
\end{proposition}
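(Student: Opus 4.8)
The plan is to exploit the geometric claim made in the text itself: that each quadrilateral of type $\mathbf{S}\big[A_1A_2B_2B_1\big]$ lies inside the Newton polyhedron generated by its neighboring quadrilaterals of type $\mathbf{S}\big[A_5A_6B_6B_5\big]$ (together with the vertex sets $\mathcal{A}, \mathcal{B}$), so that positivity transfers for free via Proposition \ref{proposition1}. Concretely, I would first recall the explicit parameterization \eqref{S1}: a point of $\mathbf{S}\big[A_1A_2B_2B_1\big]$ has fixed third coordinate $b_3 = (1-\lambda)\xi_3 + \lambda\eta_3$, prescribed sum $b_1+b_2 = (1-\lambda)(\xi_1+\xi_2)+\lambda(\eta_1+\eta_2)$, and the two smallest entries $b_1\wedge b_2$ constrained between $(1-\lambda)\xi_1+\lambda\eta_1$ and $(1-\lambda)\xi_2+\lambda\eta_2$. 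The task is then to show every such point dominates (coordinatewise, i.e.\ lies in $\mathbf{b}_0+\R_+^3$) some point $\mathbf{b}_0$ whose positivity is already known.

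The key step is to identify the dominated ``base'' point. Since $\mathbf{S}\big[A_5A_6B_6B_5\big]$ (and its two mirror images under the permutations of Lemma \ref{lemmaS}) carry the largest coordinate in the two non-$b_3$ slots, the natural candidate is a point of $\mathbf{S}\big[A_3A_5B_5B_3\big]$ or $\mathbf{S}\big[A_2A_4B_4B_2\big]$ lying ``below'' the given quadrilateral. I would therefore, for a fixed $\lambda$ and a fixed pair $(b_1,b_2)$ on the segment $L_\lambda$ of \eqref{S1}, construct a point $\mathbf{b}_0 = (b_1', b_2', b_3')$ with $b_j' \le b_j$ for all $j$, with $\mathbf{b}_0$ on a quadrilateral already covered by Proposition \ref{lemmaQ1} (or on $\mathbf{H}_{\mathcal{A}}$, $\mathbf{H}_{\mathcal{B}}$ via Theorem \ref{theorem2}, \ref{theorem3}), and then invoke part (i) of the transference principle. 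Because $b_3$ is the \emph{largest} entry on $\mathbf{S}\big[A_1A_2B_2B_1\big]$ while it is the \emph{smallest} on $\mathbf{S}\big[A_5A_6B_6B_5\big]$, the vertical edges of the two hexagons run in opposite orientation, and one should be able to lower $b_3$ down to the corresponding $\xi_1$-type value while keeping $b_1,b_2$ fixed, landing on a neighboring quadrilateral whose positivity is in hand. The admissibility hypothesis $(a_1,a_2)\in\Lambda\cap\Sigma$ is exactly what guarantees both the triangle/hexagon results (Theorem \ref{theorem2}) \emph{and} the side-quadrilateral results (Proposition \ref{lemmaQ1}) are simultaneously available for the chosen base points.

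The main obstacle will be the geometric bookkeeping of the inclusion, i.e.\ verifying rigorously that $\boldsymbol{\Gamma}_+$ of the neighboring quadrilaterals really contains $\mathbf{S}\big[A_1A_2B_2B_1\big]$, and in particular checking the coordinatewise domination $b_j'\le b_j$ holds uniformly for every $\lambda\in[0,1]$ and every admissible split of $b_1+b_2$. The cleanest route is to compare the defining plane equations: $\mathbf{S}\big[A_1A_2B_2B_1\big]$ lies on a plane analogous to \eqref{SP1} but governed by the largest coordinate, and one shows this plane lies on the correct side of the planes \eqref{SP1} carrying the neighbors, so that descending in $b_3$ stays inside the polyhedron. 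I would handle the three sub-correspondences $(\xi,\eta)$ exactly as in the proof of Proposition \ref{lemmaQ1} (the three cases determined by the relative order of $a_2, a_1+1/2, 2a_1$), each reducing to a short inequality among linear functions of $\lambda$. Once the inclusion $\mathbf{S}\big[A_1A_2B_2B_1\big]\subset \boldsymbol{\Gamma}_+\big(\mathbf{S}\big[A_5A_6B_6B_5\big]\cup\mathcal{A}\cup\mathcal{B}\big)$ is established, Proposition \ref{proposition1}(i) gives strict positivity at interior points and nonnegativity at the vertices, and the two remaining quadrilaterals follow by the symmetries of Lemma \ref{lemmaS}, completing the proof.
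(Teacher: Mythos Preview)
Your approach is essentially the paper's: show that $\mathbf{S}[A_1A_2B_2B_1]$ lies in the Newton polyhedron of regions already handled (the hexagon $\mathbf{H}_{\mathcal{A}}$ and the type-$\mathbf{S}[A_5A_6B_6B_5]$ quadrilaterals), then invoke the transference principle. Two points of comparison are worth noting.

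First, a slip in the bookkeeping: the neighbors of $\mathbf{S}[A_1A_2B_2B_1]$ that share its lateral edges are $\mathbf{S}[A_1A_3B_3B_1]$ and $\mathbf{S}[A_2A_4B_4B_2]$ (both of type $\mathbf{S}[A_5A_6B_6B_5]$ by Lemma \ref{lemmaS}), not $\mathbf{S}[A_3A_5B_5B_3]$, which is itself of the type you are trying to prove and is not even adjacent. Your argument would be circular if you land on that one, so make sure you use the correct neighbors.

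Second, the paper's execution is tighter than your plan. Rather than a three-case split mirroring Proposition \ref{lemmaQ1}, it constructs the set $\Pi$ of vertical ($\mathbf{e}_3$-direction) strips over the three known edges $A_1A_2,\,A_1B_1,\,A_2B_2$ and splits into only \emph{two} cases according to whether $\xi_1+\xi_2=\eta_1+\eta_2$ or $\xi_1+\xi_2<\eta_1+\eta_2$. In the first case the plane \eqref{Q2} is vertical and $\mathbf{S}[A_1A_2B_2B_1]\subset\Pi$ outright; in the second case (which occurs only for $0<a_1<a_2\le\min(2a_1,1/2)$) the paper shows each interior point is $Q+\epsilon\,\mathbf{e}_1$ for some $Q\in\Pi$, $\epsilon>0$. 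So ``lowering $b_3$'' alone does not always land you on a known region---in the second subcase you must decrease $b_1$ (or $b_2$) instead. Keep that in mind when you carry out the inequalities.
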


\smallskip

\begin{proof}
Let $\{\mathbf{e}_1, \mathbf{e}_2, \mathbf{e}_3\}$ be the standard basis of $\R^3$ and
$\Pi$ the union of infinite vertical strips bounded below by line segments
$A_1A_2,\,A_1 B_1,\,A_2 B_2$,
\begin{equation}\label{Q1}
\Pi = \left\{\mathbf{b} + \epsilon\,\mathbf{e}_3 : \epsilon\ge 0,
\,\mathbf{b}\in A_1A_2 \cup A_1 B_1 \cup A_2 B_2\right\}.
\end{equation}
Since $\,(a_1, a_2)\in\Lambda\cap\Sigma,$ it follows from Propositions \ref{theorem1}, \ref{lemmaQ1},
together with the transference principle, that $\Phi$ remains strictly positive for each point of $\Pi$
except for vertices $A_1, A_2, B_1, B_2$ where $\Phi$ is nonnegative.

As readily observed, $\mathbf{S}\big[A_1A_2B_2B_1\big]$ lies on the plane
\begin{align}\label{Q2}
b_1 + b_2 + b_3 = 3a_1 + 1/2 + a_2 + \frac{\,2(a_2 - a_1)\,}{\eta_3 -\xi_3} (b_3 - \xi_3).
\end{align}

\begin{itemize}
\item[(i)] If $\,\xi_1+\xi_2 = \eta_1+\eta_2\,$ or equivalently $\,\eta_3-\xi_3 = 2(a_2 -a_1),$ then
the plane becomes vertical with its equation $\,b_1+b_2 = \xi_1+\xi_2\,$ so that
$$\mathbf{S}\big[A_1A_2B_2B_1\big]\subset \Pi,$$
whence the result follows immediately.
\item[(ii)] In the case $\,\xi_1+\xi_2 <\eta_1+\eta_2,$ which may happen only when
$$\,0<a_1<a_2\le\min\big(2a_1,\,1/2\big),$$
\eqref{Q2} becomes $\,b_1+b_2 - b_3 = a_1 + a_2 -1/2\,$ and $\Pi$
consists of three vertical strips satisfying the plane equations
$$ b_1+b_2 = 2a_1 + a_2,\,\, 2b_1 + b_2 =2a_1 + 2a_2, \,\, b_1 + 2b_2 = 2a_1+2a_2.$$
By considering the inner product between normal vectors of these planes, it is simple to find,
for instance, that $\mathbf{S}\big[A_1A_2B_2B_1\big]$ and $\Pi$ are
positively distant apart with respect to the $b_1$-direction in the sense that
any point $P$ on this quadrilateral, excluding the common edges, can be written as
$\,P = Q + \epsilon\,\mathbf{e}_1\,$ with unique $\,Q\in\Pi\,$ and $\,\epsilon>0.$
Consequently, the result follows by the transference principle.
(Alternatively, one may use the inclusion
$$\mathbf{S}\big[A_1A_2B_2B_1\big]\subset\boldsymbol{\Gamma}_+
\left(\big\{A_1, A_2, A_3, A_4, B_1, B_2, B_3, B_4\big\}\right).$$
\end{itemize}
\end{proof}

\section{Main positivity result}
On combining Theorem \ref{theorem1}, Lemma \ref{lemmaS} and Propositions \ref{lemmaQ1}, \ref{lemmaQ2},
with the aid of the transference principle, we may conclude that the Newton polyhedron of
$\mathcal{A}\cup\mathcal{B}$ is a positivity region of $\Phi$ for any pair
$\,(a_1, a_2)\in \Lambda\cap\Sigma,$ what we aimed to establish ultimately.

\begin{theorem}\label{theorem4}
Suppose that $\,a_1>0,\,a_2>0\,$ and
\begin{equation}\label{M1}
\left\{\begin{aligned}
{a_1<a_2\le \min\big(2a_1,\, 1/2\big)\qquad} &{\quad\text{if}\quad a_1<1/2,}\\
{a_1<a_2\le \min\left[ 2a_1,\, \frac 32\left(a_1 + \frac 12\right)\right]} &{\quad\text{if}\quad a_1\ge 1/2.}
\end{aligned}\right.
\end{equation}
If $\,(b_1, b_2, b_3)\in\boldsymbol{\Gamma}_+\left(\mathcal{A}\cup\mathcal{B}\right),$
then $\,\Phi(x)\ge 0\,$ for all $\,x>0\,$ and
strict inequality holds true unless $\,(b_1, b_2, b_3)\in\mathcal{A}\cup\mathcal{B}.$
\end{theorem}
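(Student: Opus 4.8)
The plan is to establish $\,\Phi\ge 0\,$ on the compact lower boundary of $\boldsymbol{\Gamma}_+\left(\mathcal{A}\cup\mathcal{B}\right)$ by assembling the results already proved, and then to spread this positivity over the whole polyhedron by the transference principle. First I would check that \eqref{M1} forces $\,0<a_1<a_2\,$ together with $\,(a_1,a_2)\in\Lambda\cap\Sigma.\,$ When $\,a_1<1/2,\,$ condition \eqref{M1} is exactly the defining inequality of $\Sigma$, and since $\,a_2\le 2a_1\le \tfrac12(3a_1+1/2)\,$ in this range it also places $(a_1,a_2)$ in $\Lambda$; when $\,a_1\ge 1/2,\,$ the bound $\,a_2\le 2a_1\,$ gives membership in $\Sigma$ while $\,a_2\le \tfrac32(a_1+1/2)\,$ gives membership in $\Lambda$.

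With admissibility in hand, Proposition \ref{theorem1} yields $\,\Phi\ge 0\,$ on the hexagon $\mathbf{H}_{\mathcal{A}}$, Theorem \ref{theorem3} yields $\,\Phi\ge 0\,$ on $\mathbf{H}_{\mathcal{B}}$ (legitimate precisely because $\,0<a_1<a_2$), and Propositions \ref{lemmaQ1}, \ref{lemmaQ2}, in conjunction with the symmetries recorded in Lemma \ref{lemmaS}, yield $\,\Phi\ge 0\,$ on each of the six side quadrilaterals, with strict inequality off the vertices, all of which lie in $\mathcal{A}\cup\mathcal{B}$. Denote by $M$ the union of these eight pieces.

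The geometric heart of the argument is that $M$ contains the compact lower boundary of $\boldsymbol{\Gamma}_+\left(\mathcal{A}\cup\mathcal{B}\right)$. Because $\,a_1<a_2,\,$ the plane $\,b_1+b_2+b_3=3a_1+a_2+1/2\,$ of $\mathbf{H}_{\mathcal{A}}$ (normal $(1,1,1)$) is the lowest supporting plane; the six quadrilaterals slope upward from the edges of $\mathbf{H}_{\mathcal{A}}$ to the vertices $\mathcal{B}$ along the planes \eqref{SP1}, \eqref{Q2}, and above $\mathcal{B}$ the only faces are the vertical ones parallel to a coordinate axis. Hence the compact faces of $\boldsymbol{\Gamma}_+\left(\mathcal{A}\cup\mathcal{B}\right)$ are exactly $\mathbf{H}_{\mathcal{A}}$ together with the six quadrilaterals, all contained in $M$. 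Since $\boldsymbol{\Gamma}_+\left(\mathcal{A}\cup\mathcal{B}\right)$ is up-closed in the coordinatewise order on $\R_+^3$, every $\,\mathbf{p}\,$ in it can be written $\,\mathbf{p}=\mathbf{b}+\boldsymbol{\epsilon}\,$ with $\,\mathbf{b}\,$ on a compact face and $\,\boldsymbol{\epsilon}\in\R_+^3,\,$ obtained simply by lowering the coordinates of $\mathbf{p}$ until the boundary is met. Applying part (i) of Proposition \ref{proposition1} at such $\,\mathbf{b}\in M\,$ gives $\,\Phi>0\,$ for all $\,x>0\,$ whenever $\,\boldsymbol{\epsilon}\ne 0$; the remaining points lie in $M$, where $\,\Phi\ge 0\,$ and strict positivity can fail only at $\mathcal{A}\cup\mathcal{B}$, at which \eqref{J} exhibits $\Phi$ as a nonnegative multiple of a squared Bessel function. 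The degenerate configurations in which a hexagon collapses to a triangle or a point are treated as continuous deformations of the generic case, exactly as in the preceding sections.

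The step I expect to be the main obstacle is the geometric claim that $\mathbf{H}_{\mathcal{A}}$ and the six side quadrilaterals genuinely exhaust the compact lower boundary, leaving no further compact face concealed near the $\mathcal{B}$-vertices, where the upward-sloping quadrilaterals abut the vertical faces. Verifying that the planes \eqref{SP1} and \eqref{Q2} support $\boldsymbol{\Gamma}_+\left(\mathcal{A}\cup\mathcal{B}\right)$ and tile the lower boundary without gaps, uniformly across the hexagon/triangle/point shapes of both $\mathbf{H}_{\mathcal{A}}$ and $\mathbf{H}_{\mathcal{B}}$, is the one genuinely delicate point; once it is secured, the analytic positivity already supplied by the earlier lemmas makes the transference step routine.
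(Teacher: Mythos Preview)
Your proposal is correct and follows essentially the same route as the paper, whose proof is the single sentence ``On combining Theorem \ref{theorem1}, Lemma \ref{lemmaS} and Propositions \ref{lemmaQ1}, \ref{lemmaQ2}, with the aid of the transference principle, we may conclude that the Newton polyhedron of $\mathcal{A}\cup\mathcal{B}$ is a positivity region of $\Phi$ for any pair $(a_1,a_2)\in\Lambda\cap\Sigma$''; you have simply unpacked this, verifying explicitly that \eqref{M1} is equivalent to $(a_1,a_2)\in\Lambda\cap\Sigma$ with $a_1<a_2$, and spelling out the transference step. The geometric concern you flag---that $\mathbf{H}_{\mathcal{A}}$ and the six side quadrilaterals exhaust the compact faces---is legitimate and is handled in the paper only implicitly, via the explicit face descriptions \eqref{P1}--\eqref{P3} given in the Remark following the theorem; note also that your invocation of Theorem \ref{theorem3} for $\mathbf{H}_{\mathcal{B}}$ is harmless but redundant, since $\mathbf{H}_{\mathcal{B}}$ is not itself a face of $\boldsymbol{\Gamma}_+(\mathcal{A}\cup\mathcal{B})$ and the $\mathcal{B}$-vertices already lie on the side quadrilaterals.
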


\begin{remark}
By exploiting the symmetry specified in Lemma \ref{lemmaS} and \eqref{SP1}, it is
not simple but elementary to give an analytic expression for the Newton polyhedron of
$\mathcal{A}\cup\mathcal{B}$ in terms of $a_1, a_2$ as follows.

\begin{itemize}
\item[(P1)] For $\,1/2\le a_1<a_2\le a_1+1/2\le 2a_1,$ $\boldsymbol{\Gamma}_+\left(\mathcal{A}\cup\mathcal{B}\right)$
consists of all points $(b_1, b_2, b_3)$ satisfying the following conditions simultaneously:
\begin{equation}\label{P1}
\left\{\begin{aligned}
&{\,b_i\ge a_1,\,\, b_i + b_j \ge a_1 + a_2 + 1/2\quad(i\ne j),}\\
&{\,b_1 + b_2 + b_3\ge 3a_1 + a_2 + 1/2,}\\
&{\,b_1 + b_2 + 3b_3\ge 3a_1 + 3a_2 + 1/2,}\\
&{\,b_1 + 3b_2 + b_3\ge 3a_1 + 3a_2 + 1/2,}\\
&{\,3b_1 + b_2 + b_3\ge 3a_1 + 3a_2 + 1/2.}\\
\end{aligned}\right.
\end{equation}

\item[(P2)] For $\,1/2\le a_1< a_1+1/2\le a_2\le 2a_1,$ $\boldsymbol{\Gamma}_+\left(\mathcal{A}\cup\mathcal{B}\right)$
consists of all points $(b_1, b_2, b_3)$ satisfying the following conditions simultaneously:
\begin{equation}\label{P2}
\left\{\begin{aligned}
&{\,b_i\ge a_1,\,\,b_i + b_j \ge a_1 + a_2 + 1/2\quad(i\ne j),}\\
&{ \, b_1 + b_2 + b_3\ge 3a_1 + a_2 + 1/2,}\\
&{\,b_1 + b_2 + b_3\ge a_1 + 3a_2 + 1/2- 4(a_2-a_1)(b_3-a_1),}\\
&{\,b_1 + b_2 + b_3\ge a_1 + 3a_2 + 1/2- 4(a_2-a_1)(b_2-a_1),}\\
&{\,b_1 + b_2 + b_3\ge a_1 + 3a_2 + 1/2- 4(a_2-a_1)(b_1-a_1).}\\
\end{aligned}\right.
\end{equation}
(Note that \eqref{P2} reduces to \eqref{P1} in the case $\,a_2 = a_1 + 1/2.$)

\item[(P3)] For $\,0<a_1<a_2\le\min\big(2a_1,\,1/2\big),$
$\boldsymbol{\Gamma}_+\left(\mathcal{A}\cup\mathcal{B}\right)$
consists of all points $(b_1, b_2, b_3)$ satisfying the following conditions simultaneously:
\begin{equation}\label{P3}
\left\{\begin{aligned}
&{\,b_i\ge a_1,\,\, b_i + b_j \ge 2a_1 + a_2,}\\
&{\,(2b_i+b_j)\wedge(b_i+ 2b_j)\ge 2a_1+ 2a_2\quad (i\ne j),}\\
&{\,b_1 + b_2 + b_3\ge 3a_1 + a_2 + 1/2,}\\
&{\,b_1 + b_2 + 3b_3\ge 3a_1 + 3a_2 + 1/2,}\\
&{\,b_1 + 3b_2 + b_3\ge 3a_1 + 3a_2 + 1/2,}\\
&{\,3b_1 + b_2 + b_3\ge 3a_1 + 3a_2 + 1/2.}\\
\end{aligned}\right.
\end{equation}
\end{itemize}
\end{remark}

\begin{figure}[!ht]
 \centering
 \includegraphics[width=250pt, height= 250pt]{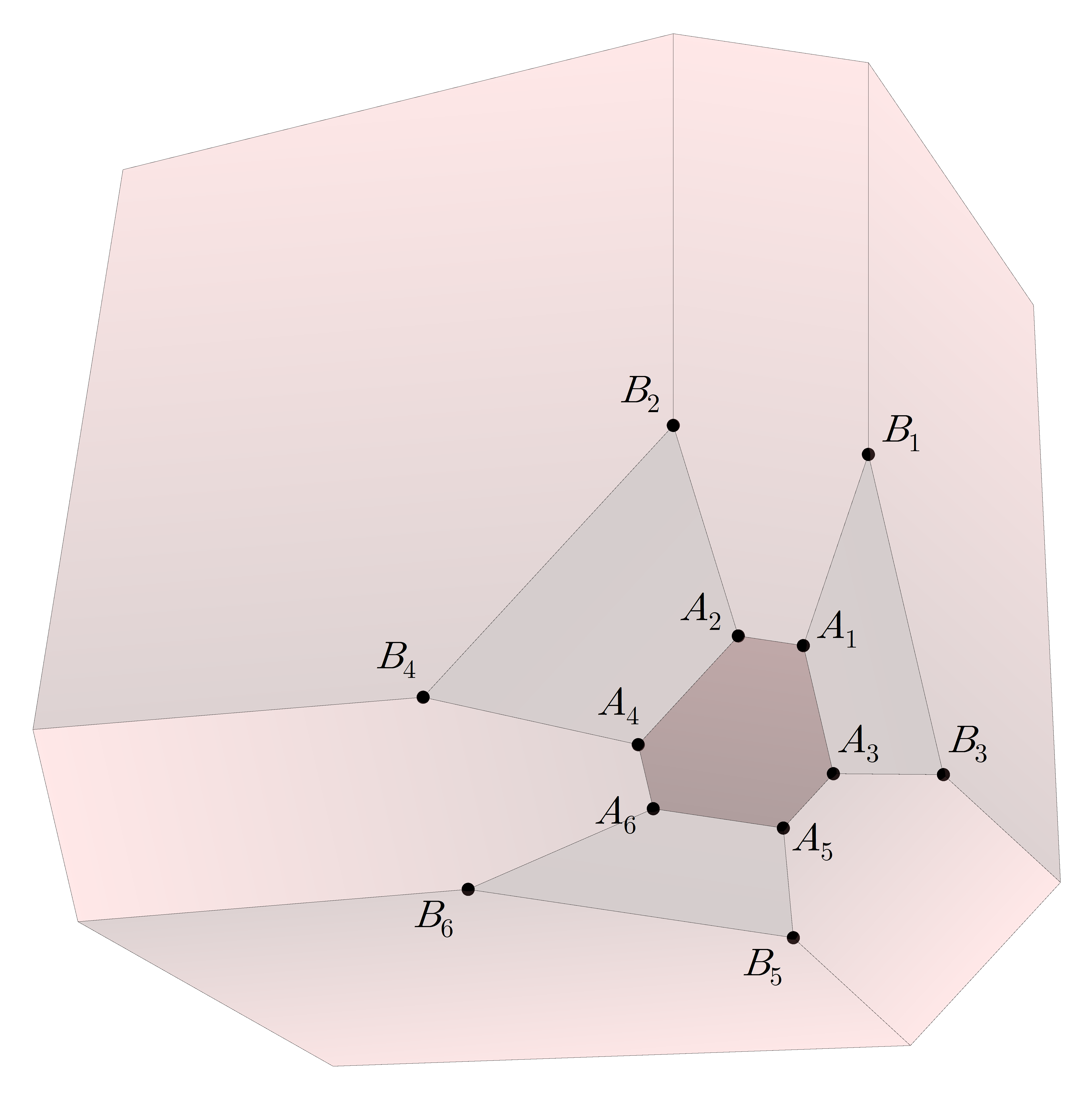}
 \caption{A Newton polyhedron of case (P1) or (P2) with a hexagonal face.}
 \label{Fig3}
\end{figure}

\begin{figure}[!ht]
 \centering
 \includegraphics[width=250pt, height= 250pt]{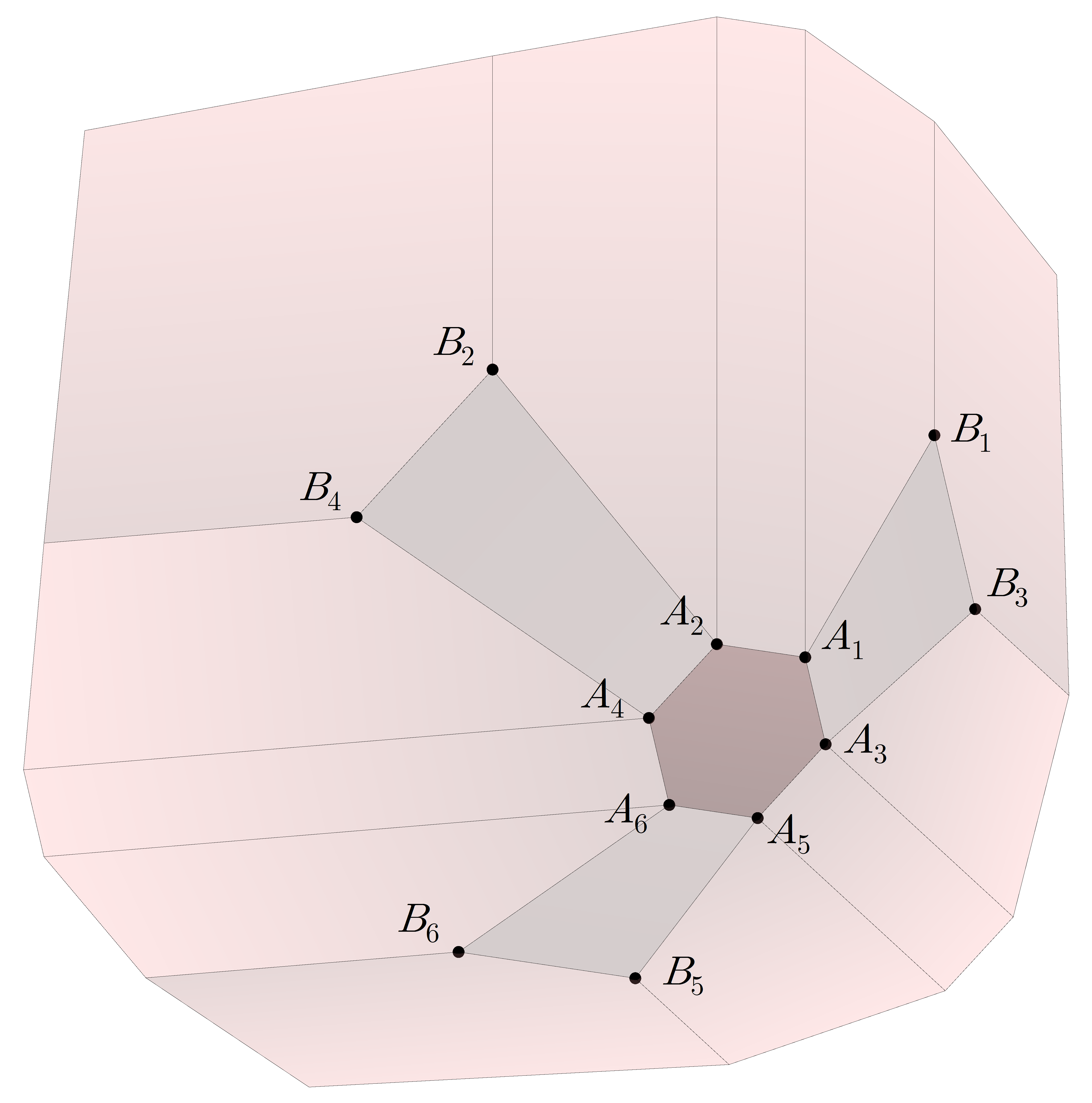}
 \caption{A Newton polyhedron of case (P3) with a hexagonal face.}
\label{Fig2}
\end{figure}

A close inspection reveals that $\,\xi_1+\xi_2 = \eta_1+\eta_2\,$ in both cases of (P1), (P2),
whereas $\,\xi_1+\xi_2 <\eta_1+\eta_2\,$ in case of (P3). From a geometric view-point,
if $\mathbf{H}_\mathcal{A}$ is a hexagon, it means that the Newton polyhedra
corresponding to (P1), (P2) are alike with the same number of faces, whereas the Newton polyhedron
corresponding to (P3) has six more faces. For convenience, we illustrate
two kinds of Newton polyhedra in Figures \ref{Fig3}, \ref{Fig2}.

We recall from \eqref{NC} of Proposition \ref{proposition2} that
\begin{equation}\label{NCR}
\mathcal{R} = \left\{\mathbf{b}\in a_1\mathbf{1} + \R_+^3:
\mathbf{b}\cdot\mathbf{1} \ge 3a_1 + a_2 + \frac 12\right\},
\end{equation}
with $\,\mathbf{1} = (1, 1, 1),$ provides a necessity region for $\Phi$. In addition,
if $\mathbf{b}$ lies outside $\mathcal{R}$, then $\Phi$ oscillates in sign at least once on $(0, \infty)$.
As it is evident that the Newton polyhedron $\boldsymbol{\Gamma}_+\left(\mathcal{A}\cup\mathcal{B}\right)$
of any case is properly contained in $\mathcal{R}$, we have thus covered all cases of interest
in the first octant of $\R^3$ except the missing region $\mathcal{R}\setminus
\boldsymbol{\Gamma}_+\left(\mathcal{A}\cup\mathcal{B}\right)$
under the assumption \eqref{M1}.

\section{Fractional integrals of Bessel functions}
This section deals with an application to the problem of positivity for
a class of Riemann-Liouville fractional integrals of Bessel functions. Of critical importance
in this subject will be the following result which improves our earlier version \cite[Theorem 3.1]{CCY1} considerably.

\begin{lemma}\label{lemmaF}
For $\,a>0, \,b>0, \,c>-1,$ we have
\begin{equation}\label{F1}
{}_2F_3\left[\begin{array}{c}
a, \,\,a+ 1/2\\
c+1, \,a+b, \, a+ b+ 1/2\end{array}\biggr| - x^2\right]\ge 0\qquad(x>0)
\end{equation}
when $a, b, c$ satisfy the following case assumptions.
\begin{itemize}
\item[\rm(i)] If $\,-1<c\le -1/2,$ then $\,0<a\le\min\big(c+1,\,\, b+c\big),\,\,b\ge 1/4.$
\item[\rm(ii)] If $\,c\ge -1/2,$ then $\,b\ge 1/4\,$ and
\begin{align*}
&\qquad 0<a\le\min\left(\frac 12,\,\, b + \frac c2 - \frac 14\right)
\quad \text{or}\\
&\frac 12\le a \le\min\left(c+1,\,\,b+c,
\,\, b+\frac c2 + \frac 14,\,\,
\frac b2  + \frac {3c}{4} + \frac 38,\,\,2b +\frac c2 - \frac 14\right).
\end{align*}
\end{itemize}
Moreover, the inequality of \eqref{F1} is strict unless
\begin{equation}\label{F2}
a = 1/2, \,b= 1, \,c = - 1/2\quad\text{or}
\quad a=1,\, b= c = 1/2.
\end{equation}
\end{lemma}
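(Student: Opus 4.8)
The plan is to recognize \eqref{F1} as the function $\Phi$ studied above with $a_1 = a$, $a_2 = a + 1/2$ and denominator point $\mathbf{b} = (b_1, b_2, b_3) = (c+1,\, a+b,\, a+b+1/2)$, and then to deduce positivity by locating $\mathbf{b}$ inside one of the Newton polyhedra furnished by Theorems \ref{theorem3} and \ref{theorem4}. First I would record the two vertex sets attached to this choice: $\mathcal{A}$ is the set of permutations of $(a+1/2,\, a+1/2,\, 2a)$, while $\mathcal{B}$ is the set of permutations of $(a,\, a+1,\, 2a+1)$, so that $\eta_1 = a$, $\eta_2 = a+1$, $\eta_3 = 2a+1$ are distinct. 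Since $0 < a_1 < a_2$ holds for every $a>0$, Theorem \ref{theorem3} is always available; moreover the boundary relation $a_2 = a_1 + 1/2$ makes the admissibility requirement \eqref{M1} hold precisely when $a \ge 1/2$, so that Theorem \ref{theorem4} is at hand in that regime.

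Next I would treat the low range by the polyhedron of $\mathcal{B}$. For case (i), and for case (ii) with $a \le 1/2$, I would check the three groups of inequalities \eqref{NP3}. The componentwise bound $b_1 \wedge b_2 \wedge b_3 \ge a$ amounts to $a \le c+1$, the pairwise bounds to $a \le b+c$ and $b \ge 1/4$, and the decisive sum bound $b_1 + b_2 + b_3 \ge 4a+2$ to $2b + c \ge 2a + 1/2$. In case (i) these follow directly from $a \le \min(c+1,\,b+c)$, $b \ge 1/4$, and $c \le -1/2$, the last giving the sum bound via $2b + c \ge 2a - c \ge 2a + 1/2$. In case (ii) with $a \le 1/2$ and $c \ge -1/2$ one has $a \le 1/2 \le c+1$, the sum bound is exactly $a \le b + c/2 - 1/4$, and $a \le b+c$ follows since $b + c/2 - 1/4 \le b + c$. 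Thus $\mathbf{b} \in \boldsymbol{\Gamma}_+(\mathcal{B})$ and Theorem \ref{theorem3} yields \eqref{F1}.

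For case (ii) with $a \ge 1/2$ I would instead use the combined polyhedron $\boldsymbol{\Gamma}_+(\mathcal{A} \cup \mathcal{B})$. Because $a_2 = a_1 + 1/2$, the analytic description (P1) is in force and coincides with (P2), so I would substitute $\mathbf{b}$ into its defining inequalities. The componentwise and pairwise bounds again reduce to $a \le c+1$, $a \le b+c$, $b \ge 1/4$; the plain sum bound $b_1 + b_2 + b_3 \ge 4a+1$ becomes $a \le b + c/2 + 1/4$; and the three skew bounds $3b_1 + b_2 + b_3 \ge 6a+2$, $b_1 + 3b_2 + b_3 \ge 6a+2$, $b_1 + b_2 + 3b_3 \ge 6a+2$ become respectively $a \le b/2 + 3c/4 + 3/8$, $a \le 2b + c/2 - 1/4$, and an inequality weaker than the preceding two. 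Matching these against the hypotheses shows that the five upper bounds in (ii) reproduce exactly the binding inequalities of (P1), so $\mathbf{b} \in \boldsymbol{\Gamma}_+(\mathcal{A} \cup \mathcal{B})$ and Theorem \ref{theorem4} applies.

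It remains to pin down the strict inequality, which by Theorems \ref{theorem3} and \ref{theorem4} can fail only when $\mathbf{b}$ coincides with a vertex in $\mathcal{B}$ or in $\mathcal{A}$. Since two coordinates of $\mathbf{b}$ differ by exactly $1/2$, comparing the gaps inside $(a, a+1, 2a+1)$ forces $a = 1/2$, whence $b = 1$ and $c = -1/2$; a coincidence with $\mathcal{A}$ instead requires two coordinates of $\mathbf{b}$ to agree, that is $c+1 = a+b$, forcing $a = 1$ and $b = c = 1/2$. These are precisely the exceptional points \eqref{F2}. The bulk of the work is the routine bookkeeping that pairs each listed bound with a single defining inequality; the genuinely delicate steps are verifying that \eqref{M1} is met along the boundary $a_2 = a_1 + 1/2$ so that Theorem \ref{theorem4} may legitimately be invoked for $a \ge 1/2$, and confirming that the two case regimes $a \le 1/2$ and $a \ge 1/2$ overlap consistently at $a = 1/2$.
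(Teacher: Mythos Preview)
Your proof is correct and follows essentially the same approach as the paper's: identify $a_1=a$, $a_2=a+1/2$, apply Theorem~\ref{theorem3} via the characterization \eqref{NP3} to handle the range covered by $\boldsymbol{\Gamma}_+(\mathcal{B})$, apply Theorem~\ref{theorem4} via (P1) for $a\ge 1/2$, and then match the resulting inequalities with the stated hypotheses. The paper phrases this as deriving two conditions (F3), (F4) and combining them, whereas you split explicitly into the subcases $a\le 1/2$ and $a\ge 1/2$, but the substance is identical. Your verification of \eqref{M1} along $a_2=a_1+1/2$ and your treatment of the exceptional points are also in agreement with the paper; the only minor omission is that in ruling out $\mathbf{b}\in\mathcal{A}$ you tacitly discard the alternative $c+1=a+b+1/2$, which indeed forces $b=0$ and is excluded.
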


\begin{proof} In the case when $\,a\ge 1/2,$ we note that
Theorem \ref{theorem4} is applicable. More concretely, the pair $(a, a+1/2)$ falls under the
scope of (P1) and it follows from the characterization \eqref{P1} of
the associated Newton polyhedron that \eqref{F1} holds when
parameters $\,a, b, c\,$ satisfy
\begin{align}\label{F3}
\frac 12\le a \le\min &\biggl(c+1,\,\,b+c, \,\, b+\frac c2 + \frac 14,\,\,
\frac b2 + \frac {3c}{4} + \frac{3}{8},\nonumber\\
&\,\, 2b +\frac c2 - \frac 14\biggr)\quad\text{and}\quad  b\ge \frac 14.
\end{align}

On the other hand, Theorem \ref{theorem3} is applicable without any
additional condition on $a$. Since $\,(\eta_1, \eta_2, \eta_3) = (a, a+1, 2a+1),$ it follows from \eqref{NP3}
that \eqref{F1} holds when parameters $\,a, b, c\,$ satisfy
\begin{equation}\label{F4}
0<a\le\min\left(c+1,\,\, b+c,\,\,b+\frac c2 - \frac 14\right)\quad\text{and}\quad  b\ge \frac 14.
\end{equation}

On combining \eqref{F3}, \eqref{F4}, we confirm immediately validity
for the inequality \eqref{F1} under stated conditions.
For the strict positivity, it is
elementary to find that the tuple of denominator-parameters coincides with one of
permutations of $\,(a+1/2, a+1/2, 2a)\,$ or $\,(a, a+1, 2a+1)\,$ only in the case
of \eqref{F2}, whence the assertion follows by Theorems \ref{theorem3}, \ref{theorem4}.
\end{proof}

We consider the fractional integral of the form
\begin{align}\label{F5}
&\int_0^x (x-t)^{\lambda}\, t^{\mu} J_\alpha(t) dt\nonumber\\
&\quad =\,\, \frac{\,B(\lambda+1, \alpha+\mu+1)\,}{2^\alpha\,\Gamma(\alpha+1)}
\, x^{\alpha +\lambda+\mu+1}\nonumber\\
&\quad\times {}_2F_3\left[\begin{array}{c} (\alpha+\mu+1)/2, \,\,(\alpha+\mu+2)/2\\
\alpha+1,\, \left(\alpha+\lambda+\mu+2\right)/2, \,\left(\alpha+\lambda+\mu+3\right)/2\end{array}
\biggr| -\frac{\,x^2}{4}\right],
\end{align}
where $\,x>0\,$ and $\,\alpha>-1, \,\lambda>-1,\,\alpha+\mu+1>0\,$ assumed for
ensuring the convergence of integral (see \cite[13.1 (56)]{EMOT} and \cite[2.2]{Ga1}).
The problem of determining parameters $\alpha, \lambda, \mu$
for the positivity of the above integral is historic and we refer to \cite{Ga1}
for an extensive survey of earlier results contributed by numerous authors
(see also \cite{As1}, \cite{As2}).
Despite many partial results, however, it appears that a significant progress
has not been made yet for a complete description of parameters for positivity.

On recognizing that the ${}_2F_3$ hypergeometric function on the right side of \eqref{F5}
is of class \eqref{F1} with parameter matchings
$$ a= \frac{\,\alpha+\mu+1\,}{2},\,\,\, b= \frac{\,\lambda+1\,}{2},\,\,\,c = \alpha,$$
it is a matter of algebra to deduce from Lemma \ref{lemmaF} the following.

\begin{theorem}\label{theoremB}
For $\,\alpha>-1,$ the inequality
\begin{equation}\label{F6}
\int_0^x (x-t)^{\lambda}\, t^{\mu} J_\alpha(t)\, dt\, \ge \,0\qquad(x>0)
\end{equation}
holds true under the following case assumptions on $\lambda, \mu.$
\begin{itemize}
\item[\rm(i)] If $\,-1<\alpha\le -1/2,$ then $\,\lambda\ge -1/2, \,\,
-\alpha-1<\mu\le \min\big(\alpha+1,\,\, \lambda+\alpha\big).$
\item[\rm(ii)] If $\,\alpha\ge -1/2,$ then $\,\lambda\ge -1/2\,$ and
\begin{align*}
&\qquad -\alpha-1<\mu\le\min\left(-\alpha, \,\, \lambda-\frac 12\right)
\quad \text{or}\\
&-\alpha\le\mu \le\min\biggl[\alpha+1,\,\,\lambda+\alpha,
\,\, \lambda+\frac 12,\,\,2\lambda+\frac 12,\,\, \frac 12\left(\lambda+\alpha+\frac 12\right)
\biggr].
\end{align*}
\end{itemize}
Moreover, the inequality of \eqref{F6} is strict unless
\begin{equation}\label{F7}
\alpha = -1/2, \,\lambda =1, \,\mu= 1/2\quad\text{or}
\quad \alpha = 1/2, \, \lambda =0, \,\mu = 1/2
\end{equation}
and the integral in both exceptional cases reduces to
\begin{align}\label{F8}
\sqrt{\frac{2}{\pi}}\,\int_0^x (x-t)\cos t\, dt &=\sqrt{\frac{2}{\pi}}\,
\int_0^x \sin t\, dt\nonumber\\
&= 2\sqrt{\frac{2}{\pi}}\,\sin^2\left(\frac x2\right).
\end{align}
\end{theorem}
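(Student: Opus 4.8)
The plan is to read the result directly off Lemma \ref{lemmaF} through the integral identity \eqref{F5}. First I would record that, under the convergence hypotheses $\alpha>-1$, $\lambda>-1$ and $\alpha+\mu+1>0$, the multiplicative factor
\[
\frac{B(\lambda+1,\, \alpha+\mu+1)}{2^\alpha\,\Gamma(\alpha+1)}\, x^{\alpha+\lambda+\mu+1}
\]
in \eqref{F5} is strictly positive for every $x>0$, so the sign of the fractional integral \eqref{F6} agrees with the sign of the ${}_2F_3$ factor on the right of \eqref{F5}. Since that factor is precisely the function \eqref{F1} of Lemma \ref{lemmaF} under the substitution
\[
a=\frac{\alpha+\mu+1}{2},\qquad b=\frac{\lambda+1}{2},\qquad c=\alpha,
\]
it remains only to verify that the two case-hypotheses on $(\alpha,\lambda,\mu)$ in the statement correspond, clause by clause, to the case-hypotheses on $(a,b,c)$ imposed in Lemma \ref{lemmaF}.

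Next I would carry out this dictionary explicitly. The standing requirement $c>-1$ is exactly $\alpha>-1$, while $b\ge 1/4$ becomes $\lambda\ge -1/2$ and $a>0$ becomes $\mu>-\alpha-1$; the trichotomy $c\le -1/2$ versus $c\ge -1/2$ becomes $\alpha\le -1/2$ versus $\alpha\ge -1/2$. Each remaining bound is linear in the parameters and transforms by an elementary rescaling: $a\le c+1$ gives $\mu\le \alpha+1$, $a\le b+c$ gives $\mu\le \lambda+\alpha$, $a\le 1/2$ gives $\mu\le -\alpha$, and $a\le b+c/2-1/4$ gives $\mu\le \lambda-1/2$; in the regime $a\ge 1/2$ the three further bounds $b+c/2+1/4$, $b/2+3c/4+3/8$, $2b+c/2-1/4$ become $\lambda+1/2$, $\tfrac12(\lambda+\alpha+1/2)$, $2\lambda+1/2$, respectively. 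Matching these against $\alpha+1$ and $\lambda+\alpha$ reproduces parts (i) and (ii) of the theorem verbatim, so \eqref{F6} follows at once from \eqref{F1}.

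For the strictness claim I would invoke the exceptional set \eqref{F2} of Lemma \ref{lemmaF}: under the same substitution, $a=1/2,\,b=1,\,c=-1/2$ becomes $\alpha=-1/2,\,\lambda=1,\,\mu=1/2$ and $a=1,\,b=c=1/2$ becomes $\alpha=1/2,\,\lambda=0,\,\mu=1/2$, which is exactly \eqref{F7}; hence the inequality is strict off this two-point set. Finally I would evaluate the two exceptional integrals by hand, using the closed forms $J_{-1/2}(t)=\sqrt{2/(\pi t)}\,\cos t$ and $J_{1/2}(t)=\sqrt{2/(\pi t)}\,\sin t$. Both reduce to $\sqrt{2/\pi}\int_0^x\sin t\,dt=\sqrt{2/\pi}\,(1-\cos x)=2\sqrt{2/\pi}\,\sin^2(x/2)$, the first after a single integration by parts, which yields \eqref{F8}.

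I do not expect a genuine obstacle: the entire analytic content already resides in Lemma \ref{lemmaF}, and the step most likely to demand care is pure bookkeeping --- keeping the rescaling factors straight while converting the five competing upper bounds of case (ii), and confirming that the convergence conditions for \eqref{F5} are subsumed by the positivity hypotheses so the prefactor never changes sign.
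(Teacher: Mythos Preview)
Your proposal is correct and follows exactly the route taken in the paper: invoke the integral representation \eqref{F5}, note the strict positivity of the prefactor, apply Lemma \ref{lemmaF} under the substitution $a=(\alpha+\mu+1)/2$, $b=(\lambda+1)/2$, $c=\alpha$, and translate each inequality and the exceptional set clause by clause. The paper in fact records this as ``a matter of algebra'' and omits the dictionary you spell out, so your version is, if anything, more complete.
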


\begin{remark}
By the necessary condition \eqref{NC} for the positivity of
the ${}_2F_3$ hypergeometric function defined on the right side of \eqref{F5},
it is simple to find that the inequality \eqref{H6} fails to hold when
\begin{equation}\label{NB}
\mu>\lambda + 1/2 \quad \text{or}\quad \alpha+1<\mu\le \lambda+1/2.
\end{equation}
\end{remark}

\smallskip

A number of special cases are noteworthy. To describe, let us denote by
$\,\mathcal{P}_\alpha,\,\alpha>-1,$
the region of validity for \eqref{F6} in the $(\lambda, \mu)$-plane
determined by the above case assumptions, which represents an infinite polygonal subregion of strip
$\,[-1/2, \,\infty)\times [-\alpha-1, \,\alpha+1].$

\begin{figure}[!ht]
 \centering
 \includegraphics[width=300pt, height= 220pt]{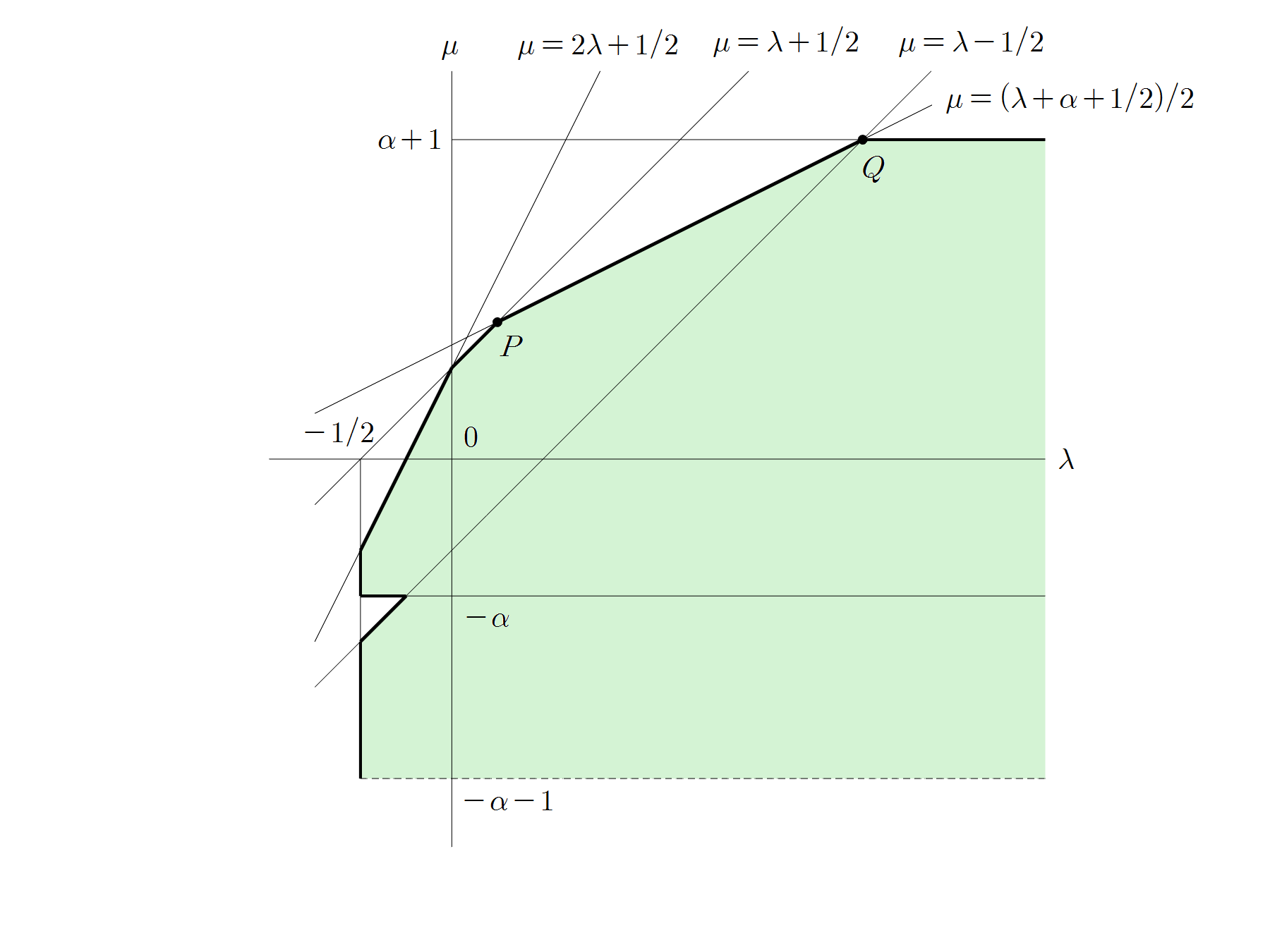}
 \caption{The validity region $\mathcal{P}_\alpha$ in the $(\lambda, \mu)$ plane for the case $\,\alpha\ge 1/2,$ where
$\,P = (\alpha-1/2, \alpha), \, Q = (\alpha+3/2, \alpha+1)$.}
\label{Fig7}
\end{figure}

As readily observed in Figure \ref{Fig7}, a part of the line $\,\mu =\lambda+1/2\,$ belongs to
$\mathcal{P}_\alpha$ only when $\,\alpha\ge 1/2\,$ and $\,0\le \lambda\le \alpha-1/2,$
which leads to

\begin{itemize}
\item[(S1)] \emph{If $\,\alpha> 1/2,\,\, 0\le \lambda\le \alpha-1/2,$ then
\begin{equation*}
\int_0^x (x-t)^{\lambda}\, t^{\lambda+1/2} J_\alpha(t)\, dt> 0 \qquad (x>0).
\end{equation*}}
\end{itemize}

This inequality was proposed by Gasper \cite[(1.5)]{Ga1} as a limiting case of
a conjecture regarding positive sums of Jacobi polynomials in Askey and Gasper \cite{AG}
and proved by himself in a rather complicated way.

In a similar manner, it is elementary to observe that a segment of
the line $\,\mu=\lambda -1/2\,$ belongs to
$\mathcal{P}_\alpha$ only when $\,\alpha\ge -1/2\,$ and
\begin{align}\label{S2}
\left\{\begin{aligned}
{-\alpha-1/2<\lambda\le \alpha + 3/2} &{\quad\text{for}\quad -1/2\le\alpha\le 0,}\\
{-1/2\le\lambda\le \alpha + 3/2} &{\quad\text{for}\quad\qquad \alpha>0.}\end{aligned}\right.
\end{align}

\begin{itemize}
\item[(S2)] \emph{For any pair $(\alpha, \lambda)$ satisfying \eqref{S2}, the inequality
\begin{equation*}
\int_0^x (x-t)^{\lambda}\, t^{\lambda-1/2} J_\alpha(t)\, dt > 0 \qquad (x>0).
\end{equation*}
holds true unless $\,\alpha = -1/2, \,\lambda =1.$}
\end{itemize}

We remark that this inequality was proved by Gasper \cite[(2.10)]{Ga1}
but only in the range
$\,1\le \lambda\le \alpha+ 3/2,\,\,\alpha>-1/2.$
For example, if we take $\,\alpha = 1/2, \,\lambda =0,$ not available in Gasper's
validity region, (S2) is equivalent to the well-known
positivity of sine integral
$$\int_0^x \frac{\sin t}{t} dt >0\qquad(x>0).$$

A line segment of $\,\mu = (\lambda+\alpha+1/2)/2\,$ belongs to
$\mathcal{P}_\alpha$ in the range
\begin{equation}\label{S3}
\left\{\begin{aligned}
&{-\alpha+1/2\le\lambda\le\alpha +3/2\quad\text{for}\quad |\alpha|\le 1/2,}\\
&{\,\,\,\,\,\alpha-1/2\le\lambda\le\alpha +3/2\quad\text{for}\quad\, \alpha\ge 1/2,}
\end{aligned}\right.
\end{equation}
where the latter case corresponds to the line segment $PQ$ in Figure \ref{Fig7},
and intersects the horizontal line $\,\mu =\alpha +\beta\,$ at $\,\lambda =
\alpha+2\beta-1/2.$ As a consequence, we find that the intersection point
$\,\big(\alpha+2\beta-1/2, \,\alpha+\beta\big)\in
\mathcal{P}_\alpha\,$
if $\, 2\alpha +2\beta \ge 1,\,\beta\le 1\,$ for $\,|\alpha|\le 1/2\,$
and $\,0\le\beta\le 1\,$ for $\,\alpha\ge 1/2,$ which leads to the following
inequality after simplifying.

\begin{itemize}
\item[(S3)] \emph{For any pair $(\alpha, \lambda)$ satisfying \eqref{S3}, the inequality
\begin{equation*}
\int_0^x (x-t)^{\lambda}\, t^{\frac 12\left(\lambda+\alpha+\frac 12\right)} J_\alpha(t)\, dt > 0 \qquad (x>0).
\end{equation*}
holds true unless $\,\alpha = -1/2, \,\lambda =1\,$ or $\,\alpha =1/2, \,\lambda = 0.$
As a particular case, if $\,\alpha+\beta\ge 1/2,\,\,0\le \beta\le 1,$ then
\begin{equation*}
\int_0^x (x-t)^{\alpha + 2\beta -1/2}\, t^{\alpha +\beta} J_\alpha(t)\, dt \ge 0\qquad(x>0)
\end{equation*}
with strict inequality unless $\,\alpha = -1/2, \,\beta =1\,$ or $\,\alpha = 1/2, \,\beta =0\,$}
\end{itemize}

The second inequality was proved by Gasper \cite{Ga1}, \cite{Ga2} (the equality case
in \cite[Theorem 7]{Ga2} must be corrected out as above). The particular case
$\,\beta =0\,$ was proved earlier by Fields and Ismail \cite{FI}.

On fixing $\,\lambda =1\,$ and inspecting the corresponding vertical line segment
contained in $\mathcal{P}_\alpha$ case by case, it is immediate to deduce the following.

\begin{itemize}
\item[(S4)] \emph{The inequality
\begin{equation*}
\int_0^x (x-t)\, t^{\mu} J_\alpha(t)\, dt>0\qquad(x>0)
\end{equation*}
holds true for any $\alpha, \mu$ satisfying the condition
\begin{equation}\label{S4}
-\alpha-1<\mu\le \min \left[\alpha+1, \,\,\frac 12\left(\alpha+\frac 32\right),\,\,
\frac 32\right],\quad \alpha>-1
\end{equation}
except for the case $\,\alpha = -1/2, \,\mu = 1/2.$}
\end{itemize}

As observed by Misiewicz and Richards \cite{MR}, this implies
\begin{equation}
\int_0^x \big(x^\delta- t^\delta\big)^\lambda\, t^{\mu} J_\alpha(t)\, dt>0\qquad(x>0)
\end{equation}
for any $\,0<\delta\le 1\le\lambda\,$ under the condition \eqref{S4} on $\alpha, \mu$,
which arises as a generalization of Kuttner's problem \cite{Ku} concerning the positivity of
Riesz means of Fourier series. The inequality under the present condition \eqref{S4} was proved by
ourselves (\cite[Theorem 4.1]{CCY1}) in a different manner and applied to extend Buhmann's classes of
radial basis functions \cite{Bu}.

In terms of Fourier cosine or sine transforms, two special cases $\,\alpha =\pm 1/2\,$
yield the following results, where we rename $\,\lambda\mapsto \alpha -1,\,\mu\mapsto \beta-1/2.\,$

\begin{itemize}
\item[(S5)]\emph{
The inequalities
\begin{align*}
{\rm(i)}\,\,\int_0^1 \cos (xt)\,(1-t)^{\alpha-1} t^{\beta-1}\,dt &\ge 0,
\qquad (\alpha, \beta)\in\Delta_c,\\
{\rm(ii)} \,\,\int_0^1 \sin (xt)\,(1-t)^{\alpha-1} t^{\beta-1}\,dt &\ge 0,
\qquad (\alpha, \beta)\in\Delta_s
\end{align*}
hold true for all $\,x>0,$ where
\begin{align*}
\Delta_c &= \Big\{(\alpha, \beta): \alpha>1, \,\, 0<\beta\le \min\left(1, \,\,\alpha -1\right)\Big\},\\
\Delta_s &= \Big\{(\alpha, \beta): \alpha\ge 1/2\quad\text{and}\quad
-1<\beta\le \min\left(0, \,\,\alpha -1\right)\\
&\qquad\qquad\text{or}\quad 0\le\beta\le\min\big[2, \,\,(\alpha+1)/2, \,\,2\alpha-1\big]\Big\}.
\end{align*}
Moreover, the first inequality is strict unless $\,\alpha =2, \,\beta=1\,$ and
the second inequality is strict unless $\,\alpha = \beta=1.$ }
\end{itemize}

We remark that $\,\Delta_c\subset\Delta_s\,$ and more generally $\mathcal{P}_\alpha$ increases
monotonically as $\alpha$ increases. Since the associated beta density
$$\,f(t) = \frac{1}{B(\alpha, \beta)} (1-t)_+^{\alpha-1}t^{\beta-1}_+\qquad(\alpha>0,\,\beta>0)$$
is neither monotone nor convex for most of $(\alpha, \beta)$,
these results indicate why it is so difficult to prove or disprove positivity of Fourier transforms
by using only intrinsic characters of densities such as convexity or monotonicity, as was suggested by Tuck \cite{T} or P\'olya \cite{P} for instance.

For negative results, we note that if $\,\beta>\alpha\,$ or
$\,\beta\le\alpha,\,\beta>1,$ then the Fourier cosine transform in (i) changes sign
infinitely often according to Proposition \ref{proposition2}.
Likewise, if $\,\beta>\alpha\,$ or $\,\beta\le\alpha,\,\beta>2,$ then the Fourier sine
transform in (ii) changes sign. In the special case $\, 0<\alpha<1,\,\,\beta =1,$
we refer to Koumandos \cite{Ko} for the estimate of those positive zeros.

The second inequality extends the result of Williamson \cite{Wi}
$$\int_0^1 \sin (xt)\,(1-t)^2 t\,dt >0,$$
which was used decisively in his simplified proof of Royall's theorem \cite{R} that
Laplace transforms of $3$-times monotone functions
on $(0, \infty)$ are univalent in the right-half plane excluding the imaginary axis.
In addition, he considered a family of Fourier sine transforms $\big(g_\alpha\big)_{\alpha>0}$
defined by
\begin{equation}\label{S5}
g_\alpha(x) = \int_0^1 \sin (xt)\,(1-t)^{\alpha-1} t\,dt\qquad(x>0)
\end{equation}
and conjectured that there exists $\,\alpha'\,$ with $\,2<\alpha'<3\,$ such that
$g_\alpha$ remains nonnegative for $\,\alpha\ge\alpha'\,$ but changes sign for $\,0<\alpha<\alpha'.$
If this conjecture were true, as he pointed out, then Royall's theorem could be
extended to the class of $\alpha$-monotone functions by the same method he employed.

\begin{proposition} Williamson's conjecture is false in that
$g_\alpha$ remains strictly positive on $(0, \infty)$ for $\,\alpha\ge 3\,$ but changes sign for
$\,0<\alpha<3.$
\end{proposition}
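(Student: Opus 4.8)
The plan is to reduce $g_\alpha$ to a single hypergeometric function to which the positivity and necessity machinery of this paper applies verbatim. Using the elementary identity $\,J_{1/2}(t) = \sqrt{2/(\pi t)}\,\sin t\,$ and the substitution $\,s = xt,\,$ I would first record
\begin{equation*}
g_\alpha(x) = \sqrt{\frac{\pi}{2}}\;x^{-\alpha-1}\int_0^x (x-s)^{\alpha-1}\, s^{3/2}\, J_{1/2}(s)\,ds,
\end{equation*}
so that $g_\alpha$ is, up to the positive factor $\sqrt{\pi/2}\,x^{-\alpha-1}$, a Riemann--Liouville fractional integral of $J_{1/2}$ of the type \eqref{F5} with Bessel index $1/2$ and exponents $\lambda = \alpha-1,\,\mu = 3/2.$ Applying \eqref{F5} and noting that the numerator parameter $3/2$ cancels the denominator parameter $\,1/2+1 = 3/2,\,$ the associated ${}_2F_3$ collapses to a ${}_1F_2$, giving
\begin{equation*}
g_\alpha(x) = C\, x\;{}_1F_2\left[\begin{array}{c} 2\\ (\alpha+3)/2,\,(\alpha+4)/2\end{array}\biggr| -\frac{x^2}{4}\right],\qquad C>0,
\end{equation*}
valid for all $\alpha>0.$ Thus the sign of $g_\alpha$ on $(0,\infty)$ is exactly that of this ${}_1F_2$.

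For the positivity when $\,\alpha\ge 3,\,$ I would invoke Theorem \ref{theoremB}(ii) with Bessel index $1/2$ and $(\lambda,\mu) = (\alpha-1,\,3/2).$ Since $\lambda = \alpha-1\ge -1/2,$ the only constraint there that varies with $\alpha$ is $\,\mu\le \tfrac12(\lambda+\tfrac12+\tfrac12) = \tfrac12\alpha,\,$ which reads $\,3/2\le \alpha/2,\,$ i.e. $\alpha\ge 3,$ while the remaining bounds hold for all such $\alpha$. As the exceptional tuples \eqref{F7} all have $\mu = 1/2\ne 3/2,$ Theorem \ref{theoremB} yields strict positivity, whence $\,g_\alpha(x)>0\,$ for all $x>0$ whenever $\alpha\ge 3.$ (This is precisely the case $\beta=2$ of the sine--transform inequality (S5)(ii), whose binding constraint $\,2\le (\alpha+1)/2\,$ again forces $\alpha\ge 3.$)

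For the sign change when $\,0<\alpha<3,\,$ I would analyze the large-$x$ behaviour of the reduced ${}_1F_2.$ Writing $\,a_1 = 2,\;b_1 = (\alpha+3)/2,\;b_2 = (\alpha+4)/2,\,$ the ${}_1F_2$ analogue of the asymptotic expansion \eqref{AS} (cf. Theorem \ref{theoremN}) contains a single algebraic term of order $\,x^{-2a_1} = x^{-4}\,$ together with an oscillatory term of order $\,x^{-\chi}\cos(x-\pi\chi/2)\,$ with coefficient $1/\sqrt\pi,$ where $\,\chi = b_1+b_2-a_1-1/2 = \alpha+1.\,$ Since this oscillatory coefficient never vanishes, whenever $\,\chi<2a_1,\,$ that is $\,\alpha+1<4\,$ or $\,\alpha<3,\,$ the oscillatory term dominates and the ${}_1F_2$ — hence $g_\alpha$ — changes sign infinitely often on $(0,\infty).$ This covers the entire range $\,0<\alpha<3,\,$ completing the argument.

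The main delicate point is the parameter cancellation in the reduction. Had one applied the necessity condition \eqref{NC} of Proposition \ref{proposition2} to the original ${}_2F_3$ with $(a_1,a_2)=(3/2,2),$ one would only obtain a sign change for $\,\alpha<2,\,$ because the slow algebraic term $\,x^{-2a_1}\,$ of \eqref{AS} has vanishing coefficient when $\,b_1=a_1.\,$ Passing first to the ${}_1F_2$, where $a_1=2$ is the genuine leading algebraic exponent, is exactly what sharpens the threshold from $2$ to $3.$ One should also dispatch the harmless degeneracy at $\,\alpha=1,\,$ where $b_1=a_1$ makes the ${}_1F_2$ reduce to a multiple of a Bessel function $J_{3/2}$; the algebraic term then disappears entirely, leaving the oscillatory term in sole command, so $g_1$ still changes sign.
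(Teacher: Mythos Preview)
Your proof is correct and follows essentially the same route as the paper: reduce $g_\alpha$ via \eqref{F5} to the ${}_1F_2$ with top parameter $2$ and bottom parameters $(\alpha+3)/2,\,(\alpha+4)/2$, then obtain positivity for $\alpha\ge 3$ from Theorem \ref{theoremB} (the paper cites the equivalent (S5)(ii) with $\beta=2$) and the sign change for $0<\alpha<3$ from the ${}_1F_2$ asymptotics (the paper cites Theorem \ref{theoremN}(i), whose proof is exactly the asymptotic argument you reproduce). Your remarks on the necessity of the parameter cancellation and the degeneracy at $\alpha=1$ are correct and add useful detail not spelled out in the paper.
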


The proof is immediate. On taking $\,\beta = 2\,$ and inspecting $\Delta_s$,
it is simple to find that $g_\alpha$ remains strictly positive on $(0, \infty)$ for $\,\alpha\ge 3.$
On the other hand, it follows from \eqref{F5} that $\,g_\alpha(x) = c_\alpha\,x\,G_\alpha(x),$ where
\begin{equation*}
G_\alpha(x) = {}_1F_2\left[\begin{array}{c}
2\\ (\alpha+3)/2,\, (\alpha+4)/2\end{array}\biggr| -\frac{\,x^2}{4}\right]
\end{equation*}
and $c_\alpha$ is a positive constant. Setting
$$ a= 2, \,\, b= (\alpha+3)/2,\,\, c= (\alpha+4)/2,$$
it follows from the necessity part (i) of Theorem \ref{theoremN} that if $\,b+c<3a+1/2,\,$ that is, $\,\alpha<3,$
then $G_\alpha$ changes sign on $(0, \infty)$ and so does $g_\alpha$.

\bigskip

\textsc{Appendix}.
The following is a summary of our work \cite{CCY2}, \cite{CY}
concerning the positivity of ${}_1F_2$ hypergeometric functions of similar type
(see also \cite{CC}, \cite{CCY1} for relevant applications and \cite{KSW} for a probabilistic approach).

\begin{theorem}\label{theoremN}
For $\,a>0,\,b>0,\,c>0,$ put
$$\Psi(x) = {}_1F_2 \left[\begin{array}{c}
a\\ b, \,c\end{array}\biggr| - x^2 \right]\qquad(x>0).$$
\begin{itemize}
\item[\rm(i)] If $\,b\le a\,$ or $\,c\le a\,$ or $\, b+c<3a+1/2,$ then $\Psi$ changes sign.
\item[\rm(ii)] If $\,(b, c)\in P_a^*,$ then $\,\Psi(x)\ge 0\,$ for all $\,x>0\,$
and strict inequality holds true unless it belongs to
$\,S = \big\{(a+1/2, 2a),\,(2a, a+1/2)\big\},$ where
$$P_a^* =\left\{(b, c): b>a,\,\, c\ge\max\Big[ 3a+ 1/2 -b,\,\, a+\frac{a}{2(b-a)}\Big]\right\}.$$
\end{itemize}
\end{theorem}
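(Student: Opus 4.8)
The plan is to separate the necessity statement (i), which I would settle by the large-$x$ asymptotics of $\Psi$ and the transference principle, from the sufficiency statement (ii), which I would reduce via Gasper's sums of squares formula \eqref{G} to the nonnegativity of a family of terminating ${}_4F_3$ series and then dispatch with the criteria of Lemma \ref{lemmaA1}.

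For (i) I would follow the template of Proposition \ref{proposition2}. The large-$x$ expansion of ${}_1F_2$ carries a single algebraic term of order $(x/2)^{-2a}$ with coefficient proportional to $1/[\Gamma(b-a)\Gamma(c-a)]$, together with an oscillatory term of order $(x/2)^{-\chi}\cos(x-\pi\chi/2)$, where $\chi=b+c-a-1/2$. When $b+c<3a+1/2$ we have $\chi<2a$, so the oscillatory term dominates and $\Psi$ changes sign infinitely often, the case of integral parameter differences being removed by the substitution $b\mapsto b+\epsilon$ and a limit. When $b+c\ge 3a+1/2$, $c>a$, and $a-1<b<a$, the factor $\Gamma(b-a)<0$ renders the leading coefficient negative while $\Psi(0)=1$, forcing a sign change; the value $b=a$ collapses $\Psi$ to the oscillating ${}_0F_1[\,;c;-x^2]$, and the tail $b\le a-1$ is excluded by Proposition \ref{proposition1}(i), since raising $b$ preserves nonnegativity and would otherwise propagate positivity into the interval $(a-1,a)$ already shown to change sign. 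The condition $c>a$ follows by symmetry.

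For (ii) I would first note that $P_a^*$ is upward closed in $\R_+^2$, its lower boundary being the segment of $b+c=3a+1/2$ between the two points of $S$ together with the hyperbolic arcs $(b-a)(c-a)=a/2$ issuing from $S$. On this boundary, applying \eqref{G} with $\nu$ fixed by $2\nu+1=b+c-a-1/2$ makes each coefficient $L_n={}_4F_3[-n,n+2\nu,\nu+1,a;\nu+1/2,b,c]$ Saalsch\"utzian, reducing $\Psi\ge 0$ to $L_n\ge 0$ for $n\ge 1$; the transference principle then carries nonnegativity, strictly, into the interior. Matching $(\alpha_1,\alpha_2,\alpha_3)\mapsto(2\nu,\nu+1,a)$ against a permutation of $(\nu+1/2,b,c)$ in Lemma \ref{lemmaA1} (the two arcs being interchanged by the symmetry of $\Psi$ in $b,c$), condition (A1) applies when $b\ge a+1/2$ and returns the linear constraint, while on the range $a<b<a+1/2$ one uses (A2); a short computation with $2\nu+1=b+c-a-1/2$ shows that its product condition $(1+2\nu)(\nu+1)a\le(\nu+1/2)bc$ is equivalent to the hyperbolic inequality $(b-a)(c-a)\ge a/2$, so that (A2) reproduces the hyperbolic face exactly.

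Strict positivity away from $S$ then follows, as in the discussion after \eqref{H2}, from the interlacing of the positive zeros of $J_{n+\nu}$ and $J_{n+\nu+1}$: as soon as one $L_n$ with $n\ge 1$ is strictly positive the square-sum expansion forces $\Psi>0$, and it collapses to the single square $\Gamma^2(\nu+1)(x/2)^{-2\nu}J_\nu^2(x)$ only for $(b,c)\in S$, by \eqref{J}. The hard part will be the hyperbolic arcs when $a<1/2$: at $(a+1/2,2a)$ one has $\nu+1>c$, so the choice $\beta_3=c$ violates $\alpha_2\le\beta_3$ and one must instead set $\alpha_2=a$, $\alpha_3=\nu+1$ and replace the underlined condition of (A2) by its alternative \eqref{A4}. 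Organizing this split at $a=1/2$, together with the uniform choice along the whole boundary of which of $\{\nu+1/2,b,c\}$ serves as $\beta_3$, is the principal technical burden.
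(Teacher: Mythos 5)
Your proposal is sound and takes essentially the same route as the source: the paper states Theorem \ref{theoremN} only as a summary of \cite{CY}, \cite{CCY2}, and the method there (mirrored by the present paper's own ${}_2F_3$ analysis) is exactly yours --- asymptotics plus the transference principle for the necessity part (i), and Gasper's expansion \eqref{G} with the Saalsch\"utzian choice $2\nu+1=b+c-a-1/2$ reduced to the terminating-series criteria of Lemma \ref{lemmaA1}, with \eqref{A4} needed when $a<1/2$, for the sufficiency part (ii). Your key computation that the product condition of (A2) is equivalent to $(b-a)(c-a)\ge a/2$ is correct and is precisely what produces the hyperbolic boundary, and the one corner you wave at (\emph{$c>a$ by symmetry}) hides no gap, since $b\le a$ together with $b+c\ge 3a+1/2$ automatically forces $c\ge 2a+1/2>a$.
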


As illustrated in Figure \ref{Fig1}, $P_a^*$ represents an infinite hyperbolic region in $\R_+^2$
containing the Newton polyhedron of $S$ characterized by
\begin{align*}
\boldsymbol{\Gamma}_+(S) = \left\{(b, c):
b\wedge c \ge \min \left(a+ 1/2,\,2a\right), \,\, b+c\ge 3a+ 1/2\right\}.
\end{align*}

\begin{figure}[!ht]
 \centering
 \includegraphics[width=300pt, height= 230pt]{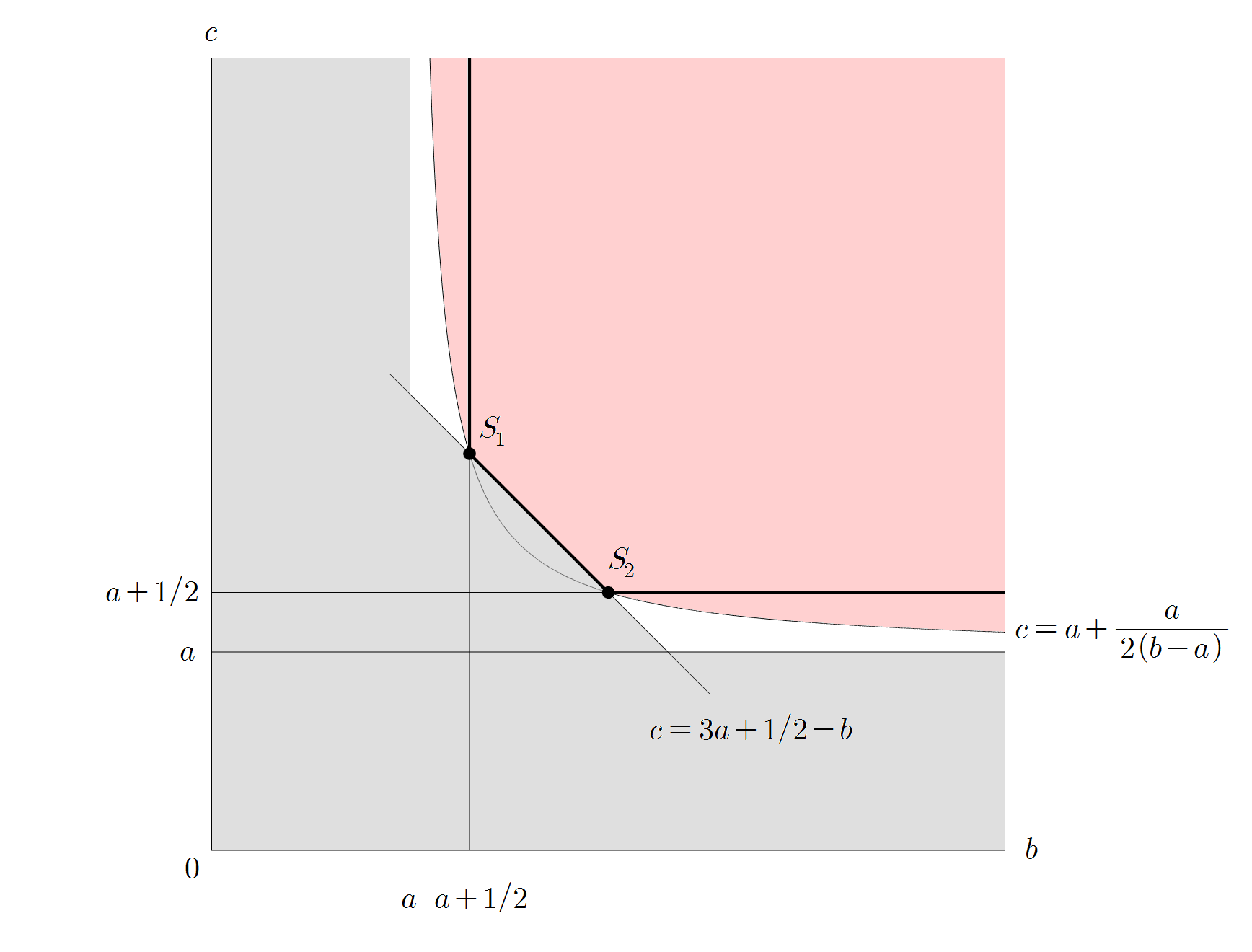}
 \caption{The positivity region $P_a^*$ (red-colored) in the case $\,a\ge 1/2,$ which contains
 the Newton polyhedron of $ S_1 = (a+1/2, 2a), \,S_2 = (2a, a+1/2).$
 If $(b, c)$ belongs to the grey-colored region, then $\Psi$ oscillates in sign.}
\label{Fig1}
\end{figure}

\bigskip

\textsc{Acknowledgements}. Yong-Kum Cho is supported by the Basic Science Research Program
through the National Research Foundation of Korea (NRF) funded by
the Ministry of Education (2018R1D1A1A09083148).
Seok-Young Chung is supported by the Chung-Ang University
Graduate Research Scholarship in 2019.

\end{document}